\newcolumntype{\grayc}{>{\itshape\large\raggedright\arraybackslash\columncolor{gray90}}c}
\newcolumntype{\centerX}{>{\centering\arraybackslash}X}
\definecolor{accentcolor}{RGB}{0,119,139} 
\definecolor{gray75}{gray}{0.75} 
\definecolor{gray90}{gray}{0.90} 
\definecolor{darkgreen}{rgb}{.0, .5, .0} 
\newcommand{\hspct}{\hspace{18pt}} 
\newcommand{\hspst}{\hspace{9pt}} 
\newcommand{\hspsst}{\hspace{4.5pt}} 
\titleformat{\chapter}[hang]
    {\Huge\bfseries\raggedright} 
    {\thechapter\hspct 
     \textcolor{gray75}{\rule[-0.25\baselineskip]{5pt}{\baselineskip}}
     \hspct}{0pt}{} 
\titleformat{\section}[hang]
    {\Large\bfseries\raggedright} 
    {\thesection\hspst\textcolor{gray75}{-}\hspst}{0pt}{} 
    [\hrule height 0.5pt]
\titleformat{\subsection}[hang]
    {\large\bfseries\raggedright} 
    {\thesubsection\hspsst\textcolor{gray75}{-}\hspsst}{0pt}{} 
\newcommand*{\field}[1]{\gdef\@field{#1}} 
\newcommand*{\project}[1]{\gdef\@project{#1}} 
\newcommand*{\mainlogo}[1]{\gdef\@mainlogo{#1}} 
\newcommand*{\optionallogo}[1]{\gdef\@optionallogo{#1}} 
    \title{Morse index stability for Yang-Mills connections}
    \author{Mario Gauvrit, Paul Laurain}
    \date{}
\begin{document}
 \hypersetup{
    linkcolor=blue,
    filecolor=magenta,      
    urlcolor=blue
    }

  \maketitle
  
  \begin{abstract}  We  prove stability results of the Morse index plus nullity of Yang-Mills connections in dimension 4 under weak convergence. Precisely we establish that the sum of the Morse indices and the nullity of a bounded sequence of Yang-Mills connections is asymptotically bounded above by the sum of the Morse index and the nullity of the weak limit and the bubbles while the Morse indices are  asymptotically bounded below by the sum of the Morse index of the weak limit and the bubbles. 
  \end{abstract}
  
  \section{Introduction}
	
Yang–Mills theory for $4$-manifolds is often associated to the celebrated    result of  Donaldson \cite{Donaldson}, see also \cite{FU} for a clear exposition, who, using the moduli spaces of anti-self-dual connections, get topological obstruction on the topology of smooth $4$-manifolds. Self-dual and anti-self-dual connections are known to be weakly stable. Reciprocally, on $S^4$, Bourguignon and Lawson \cite{BourguignonLawson} have proved that any weakly stable connection is either self-dual or anti-self-dual. There are very few examples of non self-dual (or anti-self-dual) connections, and then non-stable connections, on a general manifold. We must mention the landmark paper of Sibner and Uhlenbeck \cite{SibnerU} which gives such examples over $S^4$. So the goal of this paper consists in studying the stability of the index of general Yang-Mills connections.\\
  
  In the following, we will use the bundle-free description of weak connections introduced by Petrache and Rivière in \cite{PETRACHE2017469}, see also section VIII.1 \cite{rivière2015variations}. We develop this notion and the appropriate notion of convergence in the appendix \ref{weak}. In particular we prove that this notion is strictly equivalent to the notion of $\mathrm{W}^{1,2}$-connection when the topology is trivial, see lemma \ref{globalg}. We think that the framework developed in the appendix is the appropriate one to do analysis on connections in the critical dimension (dimension $4$). For instance, thanks to this framework, the bubble tree convergence is also state without the explicit use of gauges, see theorem \ref{bubbletree}.\\ 
  
  Let $(M^4, h)$ be an arbitrary Riemannian manifold, $G$ a compact Lie subgroup of $\mathrm{SU}(n)$, with Lie algebra $\mathfrak{g}$. In particular, by theorem A of \cite{PR14}, every $\mathrm{W}^{2,2}$-bundle is a trivial $\mathrm{W}^{1,(4,\infty)}$-bundle, see also the appendix. Here $\mathrm{L}^{4,\infty}$ is the weak $\mathrm{L}^4$ space, see \cite{Grafakos1} for a complete introduction to Lorentz's spaces. Using one such trivialization, every $\mathrm{W}^{1,2}$-connection on a $\mathrm{W}^{2,2}$ G-bundle over $M$ correspond to a $\mathfrak{g}$-valued 1-form $A$ (called the connection form) such that 
  \begin{itemize}
      \item $A\in \mathrm{L}^{4,\infty}(M,T^*M\otimes \mathfrak{g}) $,
      \item $F_A= \diff A + A \wedge A$ (defined as a distribution) is in $\mathrm{L}^2$,
      \item locally, there exists a $\mathrm{W}^{1,(4,\infty)}$-gauge $g$ satisfying $A^g\in \mathrm{W}^{1,2}$, where $ A^g := g^{-1} A g+g^{-1}\diff g$ is the expression of $A$ after the gauge change $g$.
  \end{itemize} 
 
  Denote by $\mathfrak U_G\left(M^4\right)$ the space of such connections.  Since $\mathrm{L}^{4,\infty}(M,T^*M\otimes \mathfrak{g}) \subset \mathrm{L}^2(M,T^*M\otimes \mathfrak{g})$, you can also consider $A\in \mathrm{L}^2$ with $\mathrm{W}^{1,2}-$gauge. Note that the set of connections on a fixed bundle can be realized as an affine space contained in $\mathfrak U_G(M^4)$ with direction $\mathrm{W}^{1,2}(M, T^*M\otimes \mathfrak{g})$. The total curvature, also called the Yang-Mills energy, is defined for $A\in\mathfrak U_G\left(M^4\right)$ by $$\mathcal{YM}(A)=\frac{1}{2}\int_M\left|F_{A}\right|_h^2 \mathrm{vol}_h = \frac{1}{2}\int_M \tr(F_A \wedge \star F_A).$$  This quantity is unchanged by conformal transformation (a specific property of dimension $4$) and gauge transformation: if $g\in \mathrm{W}^{2,2}(M^4,G)$ then $A^g\in\mathfrak U_G\left(M^4\right)$, $|F_{A^g}|_h = |F_A|_h$ and $\mathcal{YM}(A^g)=\mathcal{YM}(A)$. A critical point $A\in\mathfrak U_G\left(M^4\right)$ of $\mathcal{YM}$ is called a Yang-Mills connection and it is well-known that it satisfies the Euler-Lagrange equation 
  \begin{equation}
  \label{main}
      \diff_A ^* F_A =0.
  \end{equation} 
  Moreover, $F_A$ satisfies also the Bianchi equation 
  \begin{equation}
  \label{bianchi}
      \diff_A  F_A =0.
  \end{equation} 
The key ingredient when studying connection with $\mathrm{L}^2$-bounded curvature is the the gauge extraction of Uhlenbeck.
\begin{theorem}[theorem 1.3 \cite{UhlenbeckKarenK1982CwLb}, theorem IV.1 \cite{rivière2015variations} and theorem \ref{extract} of the appendix] There exists $\epsilon_G >0$ and $C_G>0$ such that for all $A\in\mathfrak U_G(\mathrm{B}^4)$ satisfying
$$ \int_{\mathrm{B}^4} \vert F_A\vert^2\, \diff x \leq \epsilon_G, $$
there exists $g\in \mathrm{W}^{1,(4,\infty)}(\mathrm{B}^4,G)$ such that 
$$ \Vert A^g\Vert_{\mathrm{W}^{1,2}} \leq C_G \Vert F_A\Vert_{\mathrm{L}^2} \text{ and } \diff^* A^g=0.$$ \label{uhlenbeckgauge}
\end{theorem}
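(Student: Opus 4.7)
The plan is to combine the weak-connection structure of $\mathfrak U_G(\mathrm{B}^4)$ with the classical continuity method of Uhlenbeck. By definition of $\mathfrak U_G(\mathrm{B}^4)$ recalled in the introduction, $A$ admits a $\mathrm{W}^{1,(4,\infty)}$-gauge $g_0$ with $\tilde A := A^{g_0} \in \mathrm{W}^{1,2}(\mathrm{B}^4, T^*\mathrm{B}^4 \otimes \mathfrak{g})$; since $F_{\tilde A} = g_0^{-1} F_A g_0$ we have $\|F_{\tilde A}\|_{\mathrm{L}^2} = \|F_A\|_{\mathrm{L}^2} \leq \sqrt{\epsilon_G}$. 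The task therefore reduces to producing $h \in \mathrm{W}^{2,2}(\mathrm{B}^4, G)$ with $\diff^*\tilde A^h = 0$ and $\|\tilde A^h\|_{\mathrm{W}^{1,2}} \leq C \|F_{\tilde A}\|_{\mathrm{L}^2}$; the desired gauge is then $g := g_0 h \in \mathrm{W}^{1,(4,\infty)}(\mathrm{B}^4, G)$.

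For this, set $A^{(t)} := t\tilde A$ for $t \in [0,1]$ and let $I \subset [0,1]$ be the set of $t$ admitting $h_t \in \mathrm{W}^{2,2}(\mathrm{B}^4, G)$ with $\diff^*(A^{(t)})^{h_t} = 0$ and $\|(A^{(t)})^{h_t}\|_{\mathrm{W}^{1,2}} \leq 2C_0 \|F_{A^{(t)}}\|_{\mathrm{L}^2}$, where $C_0$ is the Hodge constant for Coulomb $1$-forms on $\mathrm{B}^4$ (harmonic pieces being killed by suitable boundary data on the gauge). Clearly $0 \in I$. \textbf{Openness} of $I$ follows from the implicit function theorem: at $t_0 \in I$, writing $h = h_{t_0}\exp(u)$ linearizes the equation $\diff^*(A^{(t)})^h = 0$ to a Fredholm Laplace-type problem on $u \in \mathrm{W}^{2,2}$ with trivial cokernel, so a small correction exists for $t$ near $t_0$. \textbf{Closedness} and the estimate rest on the core a priori inequality: for any $\diff^*$-closed $B \in \mathrm{W}^{1,2}$, the Hodge inequality combined with $\diff B = F_B - B\wedge B$ and the Sobolev embedding $\mathrm{W}^{1,2}\hookrightarrow \mathrm{L}^4$ in dimension four yields
$$ \|B\|_{\mathrm{W}^{1,2}} \leq C_0 \|F_B\|_{\mathrm{L}^2} + C_0' \|B\|_{\mathrm{W}^{1,2}}^2; $$
when $\|B\|_{\mathrm{W}^{1,2}}$ is small the quadratic term absorbs back into the left-hand side.

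The main obstacle is to ensure that the smallness of $\|(A^{(t)})^{h_t}\|_{\mathrm{W}^{1,2}}$ is preserved uniformly along $I$ so that the absorption remains valid at every $t$. This is arranged by choosing $\epsilon_G$ so that $2 C_0 \sqrt{\epsilon_G}$ sits strictly below the absorption threshold; the bound $\|(A^{(t)})^{h_t}\|_{\mathrm{W}^{1,2}} \leq 2C_0 \|F_{A^{(t)}}\|_{\mathrm{L}^2}$ is then self-improving, and closedness follows by extracting a weak $\mathrm{W}^{2,2}$-limit of the $h_{t_n}$ from elliptic regularity applied to $\Delta h_t = \diff^*(h_t^{-1} A^{(t)} h_t)$ together with Rellich compactness, whose strong $\mathrm{W}^{1,p}$ convergence for $p<4$ suffices to pass to the limit in all nonlinear terms. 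One concludes $I = [0,1]$, and setting $h := h_1$ and $g := g_0 h$ yields the stated conclusion.
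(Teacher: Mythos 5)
Your overall strategy matches the paper's: the paper does not reprove the classical Coulomb gauge theorem but reduces it, via the weak-connection structure, to the $\mathrm{W}^{1,2}$ case and then cites Rivi\`ere's theorem IV.1 — exactly the reduction in your first paragraph — and this is precisely how theorem \ref{extract} of the appendix is proved. Your second and third paragraphs simply unfold the continuity method behind the cited $\mathrm{W}^{1,2}$ result, which is correct but not needed for the logic.

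There is, however, one step you state too quickly. You write that ``by definition of $\mathfrak U_G(\mathrm{B}^4)$, $A$ admits a $\mathrm{W}^{1,(4,\infty)}$-gauge $g_0$ with $A^{g_0}\in\mathrm{W}^{1,2}(\mathrm{B}^4)$.'' But the definition of $\mathfrak U_G$ only asserts the existence of such gauges \emph{locally}: around each point there is a $\mathrm{W}^{1,(4,\infty)}$-gauge in which $A$ becomes $\mathrm{W}^{1,2}$, and a priori these gauges need not glue into a single global one over the ball. The paper closes this gap with lemma \ref{globalg} (itself resting on lemma \ref{Ulem}, the Uhlenbeck-type patching lemma for $\mathcal{C}^0\cap\mathrm{W}^{2,2}$ transition functions): it covers the ball by small balls where Coulomb gauges exist with $\mathrm{W}^{1,2}$-control, shows the transition functions are controlled in $\mathcal{C}^0\cap\mathrm{W}^{2,2}$, and then trivializes the resulting \v{C}ech cochain, producing a global gauge on a slightly smaller ball. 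Your argument should invoke this (or the equivalent covering-and-refinement step in the proof of theorem \ref{extract}) rather than treat the global $g_0$ as immediate. Once that is in place the rest — applying the $\mathrm{W}^{1,2}$ Coulomb gauge theorem to $\tilde A$, composing $g=g_0h$, and noting $\|F_{\tilde A}\|_{\mathrm{L}^2}=\|F_A\|_{\mathrm{L}^2}$ by gauge invariance — is exactly the paper's route.
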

Here, the theorem is state on $\mathrm{B}^4$, the unit ball of $\R^4$, equipped with the flat metric for sake of clarity but it is also valid on any on ball of $(M^4,h)$. In this particular gauge, the Yang-Mills equation \eqref{main} becomes elliptic and then one can prove the following $\epsilon$-regularity result
\begin{theorem}[theorem VII.1 \cite{rivière2015variations}]
	\label{Ereg} There exists $\epsilon_G >0$ and $C_{G,l}>0$ for every $l\in \N$ such that for all $A\in\mathfrak U_G(\mathrm{B}^4)$ satisfying the Yang-Mills equation \eqref{main} and 
$$ \int_{\mathrm{B}^4} \vert F_A\vert^2\, \diff x \leq \epsilon_G, $$
there exists $g\in \mathrm{W}^{1,(4,\infty)}(\mathrm{B}^4,G)$ such that 
$$ \Vert \nabla^l A^g\Vert_{\mathrm{L}^\infty(\mathrm{B}_{1/2}(0))} \leq C_{G,l} \left( \int_{\mathrm{B}^4} \vert F_A\vert^2\, \diff x\right)^\frac{1}{2}.$$
\end{theorem}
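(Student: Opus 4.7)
The plan is to pass to the Coulomb gauge provided by theorem \ref{uhlenbeckgauge} and then run an elliptic bootstrap on a nested sequence of balls. First I would apply that theorem to extract a gauge $g \in \mathrm{W}^{1,(4,\infty)}(\mathrm{B}^4,G)$ satisfying $\diff^* A^g = 0$ and $\Vert A^g\Vert_{\mathrm{W}^{1,2}} \leq C_G \Vert F_A\Vert_{\mathrm{L}^2}$. Setting $B := A^g$ and combining the Yang-Mills equation $\diff_B^* F_B = 0$ with the Coulomb condition $\diff^* B = 0$, one obtains the semilinear elliptic system
\[ \Delta B \;=\; -\,\diff^*(B \wedge B) \;+\; \star [B,\star F_B], \]
where $F_B = \diff B + B \wedge B$ and all nonlinearities are quadratic or cubic in $B$.

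Second I would perform an $\mathrm{L}^p$-bootstrap on slightly smaller concentric balls. The critical Sobolev embedding $\mathrm{W}^{1,2}(\mathrm{B}^4) \hookrightarrow \mathrm{L}^4(\mathrm{B}^4)$ places the quadratic term $B \wedge B$ exactly on the border of $\mathrm{L}^2$, so a direct Calderon-Zygmund argument gives no improvement of integrability. Exploiting the smallness of $\Vert B\Vert_{\mathrm{W}^{1,2}}$ inherited from $\epsilon_G$, one absorbs the quadratic nonlinearity either via a Morrey/Campanato decay estimate on concentric balls, or via a Lorentz-space refinement working in $\mathrm{L}^{4,\infty}$ as is natural in the framework of the appendix. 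This breaks the critical exponent and yields $B \in \mathrm{W}^{1,p}$ for some $p>2$ on a smaller ball, with norm controlled linearly by $\Vert F_A\Vert_{\mathrm{L}^2}$.

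Third, once any subcritical integrability has been gained, $B \wedge B$ and $[B,\star F_B]$ land in strictly better spaces than $\mathrm{W}^{-1,2}$, and a standard iteration combining $\mathrm{L}^p$-regularity for $\Delta$, Schauder estimates, and cutoff functions on a decreasing sequence of balls gives $B \in \mathrm{W}^{l,p}$ for every $l \in \mathbb{N}$ and every finite $p$. Embedding into $\mathrm{C}^l(\mathrm{B}_{1/2})$ and keeping the dependence linear at each step (legitimate thanks to the smallness) yields the announced bound $\Vert \nabla^l B\Vert_{\mathrm{L}^\infty(\mathrm{B}_{1/2})} \leq C_{G,l}\, \Vert F_A\Vert_{\mathrm{L}^2}$.

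The main obstacle is the first bootstrap step, namely crossing the critical exponent. The Coulomb condition $\diff^* B = 0$ is essential to make the system elliptic in $B$ alone (otherwise the principal symbol is degenerate along the longitudinal direction), and the $\epsilon$-smallness is essential to absorb the quadratic nonlinearity; both inputs are used in a quantitative way at this stage. Once subcriticality has been reached, the higher-regularity iteration is routine.
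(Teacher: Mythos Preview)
The paper does not give its own proof of this theorem: it is quoted verbatim as theorem VII.1 of \cite{rivière2015variations} and used as a black box, so there is no in-paper argument to compare against. Your outline is the standard proof found in that reference --- Coulomb gauge from theorem \ref{uhlenbeckgauge}, the elliptic system $\Delta B = -\diff^*(B\wedge B) + \star[B,\star F_B]$, and a bootstrap whose only delicate step is crossing the critical exponent using the $\epsilon$-smallness --- and it is correct as a sketch.
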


One consequence of the $\epsilon$-regularity result is that sequence of Yang-Mills a connection must satisfies a concentration-compactness phenomena. More precisely, locally either the energy doesn't concentrate and then, up to gauge, the sequence of connections strongly converges to a limit connection, or the energy concentrate and we must rescale the sequence of connections around the concentration point to get a non-trivial weak limit, i.e a Yang-Mills connection over $\R^4$. It is possible that this rescaled sequence of connections is also subject to concentration phenomena. In this case we must also rescale it around its concentration points. Anyway this process stops, since at each step we extract at least one non trivial connection over $\R^4$ whose energy is bounded from below by $\epsilon_G$. This kind of convergence is known as bubble tree convergence, it is characteristic of  conformal invariant problems such as harmonic maps, CMC-surfaces or Willmore surfaces, see for instance \cite{Parker, BrezisCoron,LR14, BernardRiviere, RivConf}. We give here a full statement of the bubble tree convergence, or energy quantization, for Yang-Mills with the notion of convergence developed in appendix \ref{weak}. In full generality it is mainly due to  Rivière \cite{riviere2002interpolation} inspired by Rivière-Lin's work on harmonic maps \cite{RiviereLin}.
\begin{theorem}[theorem VII.3 \cite{rivière2015variations}]
\label{bubbletree}
    Let $ (M^4,h)$ be a closed four-dimensional Riemannian manifold and  $A_k\in\mathfrak {U}_G(M^4)$ a sequence of Yang-Mills connection with uniformly bounded energy. Then we have, up to a sub-sequence,
    \begin{enumerate}
        \item There exists finitely many points $\{p_1,\dots, p_N\}$ and a Yang-Mills connection $A_\infty\in \mathfrak U_G(M^4)$ such that $A_k$ converges to $A_\infty$ in $\mathfrak C^\infty_{G,\mathrm{loc}}(M\setminus \{p_1,\dots, p_N\})$.
        \item For each $i\in \{1, \dots, N\}$, there exists $N_i\in \N$ sequences of points $(p_k^{i,j})_k$ converging to $p_i$,  $N_i$ sequences of scalars $(\lambda_k^{i,j})_k$ converging to zero, $N_i$ non-trivial Yang-Mills connections, called bubbles,  $A_\infty^{i,j} \in\mathfrak{U}_G(S^4)$ such that,
        $$(\phi_{k}^{i,j})^*(A_k) \rightarrow \widehat{A}_\infty^{i,j}=\pi_*A_\infty^{i,j}  \text{ into } \mathfrak C^\infty_{G,\mathrm{loc}}(\R^4 \setminus \{\text{ finitely many points}\}),$$
        where $\phi_{k}^{i,j} (x)= p_k^{i,j} +\lambda _k^{i,j} x$ in local coordinates and $\pi$ is the stereographic projection.
        \item Moreover there is no loss of energy, i.e
        $$ \lim_{k\rightarrow +\infty}\int_{M^4} \vert F_{A_k}\vert^2_h\, \mathrm{vol}_h  = \int_{M^4} \vert F_{A_\infty}\vert^2_h\, \mathrm{vol}_h +\sum_{i=1}^N \sum_{j=1}^{N_i} \int_{S^4} \vert F_{A_\infty^{i,j}}\vert^2\, \mathrm{vol}. $$
    \end{enumerate}
\end{theorem}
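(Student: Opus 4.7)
The plan is to combine the Uhlenbeck gauge fixing (Theorem~\ref{uhlenbeckgauge}), the $\epsilon$-regularity of Theorem~\ref{Ereg}, and a neck analysis in Lorentz spaces, following the standard concentration-compactness scheme.

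First I would identify the concentration set
$$ S := \Bigl\{ p \in M \;:\; \liminf_{r \to 0} \liminf_{k \to \infty} \int_{B_r(p)} |F_{A_k}|^2 \, \mathrm{vol}_h \geq \epsilon_G \Bigr\}. $$
Since $\sup_k \mathcal{YM}(A_k) < +\infty$, the set $S = \{p_1,\dots,p_N\}$ is finite. On any small ball in $M \setminus S$ the energy eventually drops below $\epsilon_G$, so Theorem~\ref{Ereg} furnishes local Uhlenbeck gauges with uniform $C^l$-estimates on the representatives $A_k^{g_k}$. A diagonal extraction combined with the global gluing of local gauges developed in the appendix produces a limit $A_\infty \in \mathfrak{U}_G(M \setminus S)$ with smooth local convergence, and Uhlenbeck's removable singularity theorem extends $A_\infty$ to a smooth Yang-Mills connection on $M$, giving item~(1).

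For item~(2), at each $p_i$ I would perform the standard rescaling procedure: pick sequences $p_k^{i,1} \to p_i$ and scales $\lambda_k^{i,1} \to 0$ realising a concentration of $\epsilon_G/2$ on the ball $B_{\lambda_k^{i,1}}(p_k^{i,1})$ and no larger concentration on any same-size ball inside a shrinking neighbourhood of $p_i$. The rescaled sequence $(\phi_k^{i,1})^* A_k$ is Yang-Mills on $\mathbb{R}^4$ with bounded energy and no concentration on unit balls, so the previous argument yields a non-trivial smooth Yang-Mills limit on $\mathbb{R}^4$; removable singularity at infinity combined with conformal invariance promotes it to a bubble $A_\infty^{i,1} \in \mathfrak{U}_G(S^4)$. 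Iterating at the finitely many new concentration points of the rescaled sequence gives the full bubble tree, the iteration terminating because each non-trivial bubble carries at least $\epsilon_G$ of energy and the total budget is finite.

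The main obstacle is item~(3), the no-energy-loss statement. One must show that between the weak limit and each bubble, and between every bubble and its sub-bubbles, the energy on the neck region $B_{r_k}(p_k^{i,j}) \setminus B_{\rho_k \lambda_k^{i,j}}(p_k^{i,j})$ (with $r_k \to 0$ and $\rho_k \to \infty$) tends to zero. The approach is to cover each neck by dyadic annuli, fix Uhlenbeck gauges on each, and Hodge decompose the gauge representative. Using the duality $(L^{4,\infty})^* = L^{4/3,1}$ together with the Lorentz-space improvement of $F_A$ enforced by the Yang-Mills equation in Coulomb gauge, one derives a differential inequality on the annular energies $E_\ell := \int_{\Omega_\ell} |F_{A_k}|^2$ forcing geometric decay from both ends of the neck, hence vanishing total neck energy. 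This step, which relies crucially on the Lorentz-space framework of the appendix, is the technical heart of the theorem.
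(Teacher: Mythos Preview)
The paper does not actually prove Theorem~\ref{bubbletree}: it is stated as a citation of theorem VII.3 in \cite{rivière2015variations} (and, in the appendix, of theorem I.2 in \cite{riviere2002interpolation} and theorem II.7 in \cite{memoireGauvrit}), with the comment that the result ``is mainly due to Rivi\`ere \cite{riviere2002interpolation} inspired by Rivi\`ere--Lin's work on harmonic maps.'' There is therefore no in-paper proof to compare your proposal against; the authors treat the bubble-tree convergence and energy quantization as input and devote their effort to the Morse-index stability built on top of it.

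That said, your sketch is a faithful outline of the argument in the cited references: the concentration set and $\epsilon$-regularity yield item~(1), the rescaling/iteration yields item~(2), and the Lorentz-space neck analysis (Coulomb gauge on dyadic annuli, $\mathrm{L}^{2,1}$--$\mathrm{L}^{2,\infty}$ or $\mathrm{L}^{4,\infty}$ duality, and a decay inequality for the annular energies) is precisely Rivi\`ere's mechanism for item~(3). One small caution: the duality you invoke is more delicate than the line $(\mathrm{L}^{4,\infty})^* = \mathrm{L}^{4/3,1}$ suggests, since $\mathrm{L}^{4,\infty}$ is not the dual of $\mathrm{L}^{4/3,1}$ in the usual sense; in practice the neck argument uses the $\mathrm{L}^{2,1}$ bound on $F_A$ (or equivalently the $\mathrm{L}^{2,\infty}$ smallness on the neck) rather than an abstract duality statement. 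Apart from that nuance, your plan matches the literature the paper relies on.
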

One natural question once the quantization of the energy is achieved is to understand how are linked the indices of the sequence of connections and its bubble-tree limits, see section 2 for a definition of the index of a Yang-Mills connection. The Morse index stability for sequences of Yang-Mills connections is of particular interest, motivated by constructing Yang-Mills connections in dimension 4 by min-max procedures. Additionally, the theory of Yang-Mills connections in dimension 4 has many similarities with the theory of harmonic maps in dimension 2. In a recent work  \cite{daliorivieregianocca2022morse}, F. Da Lio, M. Gianocca, and T. Rivière developed a new method to show upper semi-continuity results in geometric analysis, which they applied to conformally invariant Lagrangians in dimension 2 (that include harmonic maps). Their theory was then extended to the the setting of Willmore immersions \cite{michelat2023morseWillmore} and biharmonic maps \cite{michelat2023morseBiharmonic}. If the energy quantitation implies (under suitable assumptions) the lower semi-continuity of the Morse index in a general framework, upper semi-continuity results are typically much more involved
.  Compared to the lower semi-continuity Morse index estimates, precise pointwise estimates of the sequence of solutions in regions of loss of compactness, the so-called \textit{neck regions}, see below for a definition. Here is the main achievement of the paper
\begin{theorem}
    Let $ (M^4,h)$ be a closed four-dimensional Riemannian manifold and $A_k\in\mathfrak{U}_G(M^4)$ a sequence of Yang-Mills connections with uniformly bounded energy. Let $A_\infty\in \mathfrak U_G(M^4)$ and $A_\infty^{i,j} \in\mathfrak U_G(S^4) $ be its bubble-tree limit in the sense of theorem \ref{bubbletree}. Then, For $k$ large enough, we have
    \begin{enumerate}
        \item  \[ \mathrm{ind}_\mathcal{YM}(A_k) \geqslant \mathrm{ind}_\mathcal{YM}(A_{\infty})+ \sum_{i=1}^N\sum_{j=1}^{N_i} \mathrm{ind}_\mathcal{YM}(A_{\infty}^{i,j}).\]
        \item    \[ \varsigma(A_k) \leqslant \varsigma(A_{\infty})+ \sum_{i=1}^N \sum_{j=1}^{N_i} \varsigma(A_{\infty}^{i,j}),\] where $\varsigma (A)=\ind_{\mathcal{YM}}(A) + \dim \left(\ker Q_{A} \cap \ker \diff_A^*\right)$ is the extended signature of the connection and $Q_A$ is the quadratic form associated to the second variation.
    \end{enumerate}
\end{theorem}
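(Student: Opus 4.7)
The strategy is to reduce both inequalities to a study of the second variation $Q_{A_k}$ on $\ker\diff_{A_k}^*$, using the bubble-tree decomposition of $M$ into a \emph{body region} (the complement of small balls $\mathrm{B}_{\delta}(p_i)$), \emph{bubble regions} $\mathrm{B}_{R\lambda_k^{i,j}}(p_k^{i,j})$ where $A_k$ is well approximated, after rescaling by $\phi_k^{i,j}$, by the bubble $\widehat A_\infty^{i,j}$, and \emph{neck regions} (the dyadic annuli in between, on which the no-neck-energy quantization of Theorem \ref{bubbletree} applies). For the lower bound, I would fix families of perturbations $(a^{(p)})\subset\ker\diff_{A_\infty}^*$ and $(a^{(i,j,q)})\subset\ker\diff_{A_\infty^{i,j}}^*$ of the dimensions prescribed by the Morse indices and realizing uniformly negative values of the respective quadratic forms, then transplant them to $A_k$ by multiplication with cutoffs and pull-back through $\phi_k^{i,j}$. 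Since $A_k\to A_\infty$ in $\mathfrak C^\infty_{G,\mathrm{loc}}(M\setminus\{p_i\})$, $(\phi_k^{i,j})^*A_k\to\widehat A_\infty^{i,j}$ on each fixed large compact of $\mathbb{R}^4$, and the supports of the transplanted forms become disjoint, a standard small gauge correction projects the family onto $\ker\diff_{A_k}^*$ without destroying linear independence, and the strict negativity of $Q_{A_k}$ on this family persists for $k$ large, proving the first inequality.

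For the upper bound I would argue by contradiction: assume, along a subsequence, the existence of a subspace $W_k\subset\ker\diff_{A_k}^*$ of dimension $N=\varsigma(A_\infty)+\sum_{i,j}\varsigma(A_\infty^{i,j})+1$ on which $Q_{A_k}\leq 0$. Transplanting, as in the first part, the \emph{non-positive} directions of $Q_{A_\infty}$ and of each $Q_{A_\infty^{i,j}}$, a dimension count produces a sequence $(w_k)\subset W_k$, normalized in $\mathrm{L}^2(M)$, that is $\mathrm{L}^2$-orthogonal to all those transplanted forms. Passing to the weak limit on the body, $w_k\rightharpoonup 0$ outside the concentration points, for otherwise the weak limit would be a non-positive direction of $Q_{A_\infty}$ orthogonal to the subspace realizing $\varsigma(A_\infty)$; the same argument after rescaling by $\phi_k^{i,j}$ forces the bubble-scale weak limit to vanish for every $\widehat A_\infty^{i,j}$. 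Therefore the $\mathrm{L}^2$-mass of $w_k$ concentrates in the neck regions, and the entire problem reduces to showing that on forms essentially supported in the necks the quadratic form $Q_{A_k}$ is \emph{strictly positive}, which contradicts $Q_{A_k}(w_k,w_k)\leq 0$.

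This neck coercivity is the main obstacle, and it requires adapting the machinery of Da Lio--Gianocca--Rivière \cite{daliorivieregianocca2022morse}, used also in \cite{michelat2023morseWillmore, michelat2023morseBiharmonic}, to the gauge-theoretic setting. The core step is a refined pointwise estimate for $F_{A_k}$ in the necks: on dyadic annuli of radius $\rho$ centered at a concentration point one expects $|F_{A_k}|\leq C\rho^{-2+\eta}$ for some $\eta>0$, strengthening the standard $\rho^{-2}$ critical decay coming from the $\epsilon$-regularity of Theorem \ref{Ereg}. This should be proved, working in the Uhlenbeck gauge of Theorem \ref{uhlenbeckgauge} on each annulus, by an angular Fourier expansion combined with the ellipticity of the Coulomb Yang-Mills linearization $\diff_{A}^*\diff_A$ to eliminate the low modes that saturate the scaling. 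Feeding this decay into a covariant Hardy-type inequality on annuli allows the indefinite term $\int\langle F_{A_k},[w_k,w_k]\rangle$ to be absorbed into the positive term $\int|\diff_{A_k} w_k|^2$, yielding the required coercivity of $Q_{A_k}$ in the necks. The most delicate point will be to make all constants independent of the bubble parameters and uniform as $\delta\to 0$ and $R\to\infty$.
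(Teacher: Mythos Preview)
Your lower bound argument is essentially the paper's: transplant negative directions via cutoffs and pullbacks and use the convergence of $Q_{A_k}$ on the transplanted forms. The projection onto $\ker\diff_{A_k}^*$ is unnecessary (and the ``small gauge correction'' is not standard), since $\ind Q_{A_k}$ already equals the index of $Q_{A_k}$ restricted to $\ker\diff_{A_k}^*$; the paper works directly with arbitrary $\mathrm W^{1,2}$ perturbations (Lemma~\ref{approximationlemma}).

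The upper bound argument, however, has two genuine gaps. First, normalizing $w_k$ in $\mathrm L^2(M)$ is the wrong choice: the $\mathrm L^2$ norm of $1$-forms in dimension $4$ is not scale-invariant, and near the bubble one only has $|F_{A_k}|\lesssim \lambda_k^{-2}$, so from $\mathcal Q_{A_k}(w_k)\le 0$ and $\|w_k\|_{\mathrm L^2}=1$ you cannot extract a uniform $\mathrm W^{1,2}$ bound --- the inequality $\int|\diff_{A_k}w_k|^2\le C\int|F_{A_k}||w_k|^2$ is useless when the right-hand side is uncontrolled. The paper fixes this by introducing a multi-scale weight $\omega_{\eta,k}$ (constant on the body, equal to the neck weight $\omega_{R,r}$ of Theorem~\ref{sharpestimateneck}, and of order $\lambda_k^{-2}$ on the bubble) and normalizing in $\mathrm L^2_{\omega_{\eta,k}}$; Proposition~\ref{pointcrucial2} then gives the global pointwise bound $|F_{A_k}|\le\mu_0\,\omega_{\eta,k}$, which makes the $\mathrm W^{1,2}$ bound immediate. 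Second, even granting a $\mathrm W^{1,2}$ bound, your $w_k$ is an \emph{arbitrary} element of $W_k$ and satisfies no PDE, so you get only weak convergence on body and bubble; there is no reason the weak limit should satisfy $Q_{A_\infty}(w_\infty)\le 0$, nor that it lies in $\ker\diff_{A_\infty}^*$. The paper instead \emph{diagonalizes} $\mathcal L_{\eta,k}=\omega_{\eta,k}^{-1}\mathcal L_{A_k}$ with respect to $\langle\cdot,\cdot\rangle_{\omega_{\eta,k}}$, so that generators of $W_{\eta,k}$ are eigenfunctions with eigenvalues in $[-\mu_0,0]$; the elliptic eigenvalue equation then forces \emph{strong} $\mathrm W^{1,2}_{\mathrm{loc}}$ convergence (Lemma~\ref{prelim1}), and the limits are genuine non-positive eigenfunctions of $\mathcal L_{\eta,\infty}$ and $\widehat{\mathcal L}_{\eta,\infty}$. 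The contradiction (Lemma~\ref{prelim2}) then uses the neck coercivity of Theorem~\ref{pointcrucial1} as you describe, but with the weighted norm.

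On the neck estimate: the paper does not prove $|F_{A_k}|\le C|x|^{-2+\eta}$ by Fourier expansion; it quotes the two-sided decay $|F_A|\le CE\,\omega_{R,r}$ of R{\aa}de and Groisser--Parker (Theorem~\ref{sharpestimateneck}), which matches the weight exactly and yields Theorem~\ref{pointcrucial1} with $c_0$ independent of $R,r$. Section~3.2 explains why the critical weight $|x|^{-2}$ is insufficient (the embedding $\mathrm W^{1,2}_0\hookrightarrow \mathrm L^2(|x|^{-2}\diff x)$ is \emph{not compact}, so the diagonalization step fails); any alternative estimate must also produce a weight for which the corresponding embedding is compact.
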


Thanks to the invariance by dilation of the problem, we can decompose it in a finite number of small balls with at most one concentration point. Moreover since concentration phenomena are local, the metric does not play much role, hence we  make proof only for $M^4=\mathrm{B}^4$. Hence we are reduced to study the following situation where 
\begin{enumerate}
        \item There exists $g_k\in \mathrm{W}^{1,2}(\mathrm{B}^4, G)$ such that $A_k^{g_k}$ converges to $A_\infty$ into  $C^\infty_{\mathrm{loc}}(\mathrm{B}^4\setminus \{0\})$.
        \item There exists a sequences of scalars $(\lambda_k)_k$ converging to zero and a non-trivial Yang-Mills connection $\tilde A_\infty \in\mathfrak{U}_G(S^4)$ such that,
        $$(\phi_{k})^*(A_k) \rightarrow \widehat{A}_\infty=\pi_* \tilde A_\infty  \text{ into } C_{\mathrm{loc}}^\infty(\R^4),$$
        where $\phi_{k} (x)= \lambda _k x$ in local coordinates and $\pi$ is the stereographic projection.
        \item Moreover 
        $$ \lim_{k\rightarrow +\infty}\int_{\mathrm{B}^4} \vert F_{A_k}\vert^2\, \mathrm{vol}_h  = \int_{\mathrm{B}^4} \vert F_{A_\infty}\vert^2\, \mathrm{vol}_h +\int_{S^4} \vert F_{\tilde A_\infty} \vert^2\, \mathrm{vol}. $$
    \end{enumerate}
As shown by the previous decomposition, we understand well the behavior of $A_k$ far from $0$ and at scale $\lambda_k$. The crucial point consists in understanding the behavior of $A_k$ in the intermediate region $\mathrm{B}_{\eta}(0) \setminus \mathrm{B}_{\lambda_k/\eta}(0)$ for $\eta$ small, known as \textit{neck region}.\\

\textbf{Acknowledgments:} The authors are very thankful to Tristan Rivière for many useful discussions.

  {~}
\section{Second variation : computation and finiteness of the Morse index}
{~}

We start by remind the first variations of the Yang-Mills functional in our framework. 

\begin{proposition} For every connection $A\in\mathfrak U_G(M)$,  for all $a\in \mathrm{W}^{1,2}(M, T^*M\otimes \mathfrak{g})$, 
     \begin{equation}
         \mathrm{D} \mathcal{YM}_{A} (a) = \langle F_{A}, \diff_{A} a
   \rangle. \label{variationpremiere}
     \end{equation}
\end{proposition}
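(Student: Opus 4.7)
The plan is to take the affine variation $A_t := A + ta$ for $t$ near $0$ and compute $\frac{d}{dt}\big|_{t=0}\mathcal{YM}(A_t)$ directly. First I would verify that $A_t$ remains in $\mathfrak{U}_G(M)$: since, as recalled in the introduction, the set of connections on a fixed $\mathrm{W}^{2,2}$ $G$-bundle is an affine subspace of $\mathfrak{U}_G(M)$ with direction $\mathrm{W}^{1,2}$, any $\mathrm{W}^{1,2}$-perturbation of $A$ stays in $\mathfrak{U}_G(M)$, with the Uhlenbeck gauge for $A$ serving for $A_t$ up to a $\mathrm{W}^{1,2}$ correction.

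Second, I would expand the curvature. A direct calculation from $F_B = \diff B + B\wedge B$ yields
\begin{equation*}
F_{A_t} \;=\; F_A + t\bigl(\diff a + A\wedge a + a\wedge A\bigr) + t^2\, a\wedge a \;=\; F_A + t\, \diff_A a + t^2\, a\wedge a.
\end{equation*}
Integrability in $\mathrm{L}^2$ must be justified term by term. By Sobolev embedding on the $4$-manifold one has $a \in \mathrm{W}^{1,2}\hookrightarrow \mathrm{L}^{4,2}$; combined with $A\in\mathrm{L}^{4,\infty}$ and the generalized Hölder inequality in Lorentz spaces, this gives $A\wedge a \in \mathrm{L}^2$, while $a\wedge a \in \mathrm{L}^2$ since $a\in\mathrm{L}^4$. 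Hence $F_{A_t}\in\mathrm{L}^2$ and the expansion above is an identity of $\mathrm{L}^2$-valued $2$-forms.

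Inserting this into $\mathcal{YM}(A_t) = \tfrac{1}{2}\int_M |F_{A_t}|^2$ and expanding produces
\begin{equation*}
\mathcal{YM}(A_t) \;=\; \mathcal{YM}(A) + t\int_M \langle F_A,\, \diff_A a\rangle + O(t^2),
\end{equation*}
so that differentiating at $t=0$ delivers exactly $\mathrm{D}\mathcal{YM}_A(a) = \langle F_A, \diff_A a\rangle$. The only delicate point in the argument is the Hölder-in-Lorentz-spaces estimate for $A\wedge a$: without the refinement $\mathrm{W}^{1,2}\hookrightarrow \mathrm{L}^{4,2}$ paired with the weak-$\mathrm{L}^4$ regularity of $A$, the bracket $[A,a]$ would not a priori belong to $\mathrm{L}^2$ and the expansion of $F_{A_t}$ could not be read off at the $\mathrm{L}^2$-level, so that the differentiation under the integral sign would lack justification.
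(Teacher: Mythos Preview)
Your proof is correct and follows essentially the same approach as the paper: expand $F_{A+ta}=F_A+t\,\diff_A a+t^2\,a\wedge a$, justify that each term lies in $\mathrm{L}^2$ via the refined Sobolev embedding $\mathrm{W}^{1,2}\hookrightarrow\mathrm{L}^{4,2}$ together with the Lorentz--H\"older inequality $\mathrm{L}^{4,2}\cdot\mathrm{L}^{4,\infty}\hookrightarrow\mathrm{L}^2$, and read off the derivative. The only cosmetic difference is that the paper first verifies the curvature expansion in the sense of distributions before upgrading to $\mathrm{L}^2$, whereas you compute it directly; the substance is identical.
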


\begin{proof} Let us show that we have, for $t\in \R$, 
	\begin{equation}
		\label{FAa}
		 F_{A+ta}=F_A + t\diff_A a +t^2 a\wedge a \text{ in } \mathrm{L}^2.
	\end{equation}
	Let $\phi \in \mathcal{C}^\infty_c(M, \Lambda^2 M \otimes \mathfrak g)$ then \begin{align*}
\langle F_{A+ta}, \phi \rangle =&\int_M \langle \diff^*\phi, A+ta\rangle_h +\langle \phi, (A+ta)\wedge (A+t a)\rangle_h \mathrm{vol}_h\\
=&\int_M \langle \diff^*\phi, A\rangle_h +\langle \phi, A\wedge A\rangle_h \mathrm{vol}_h+ t\int_M \langle \diff^*\phi,a\rangle_h +\langle \phi, A\wedge a+A\wedge a\rangle_h \mathrm{vol}_h \\
&+ t^2 \int_M \langle \phi, a\wedge a\rangle_h \mathrm{vol}_h\\
=&\langle F_{A}, \phi \rangle + t \langle \phi, da + A\wedge a + a\wedge a\rangle + t^2 \langle \phi, a\wedge a\rangle \\
=&\langle \phi, F_{A} \rangle + t \langle \phi,\diff_Aa \rangle + t^2 \langle \phi, a\wedge a\rangle .
\end{align*}
Hence we have the desired equality in distributional sense. Moreover we remark that all right-hand side term are in $\mathrm{L}^2$, in particular $\diff_A a$ is in $\mathrm{L}^2$ since by the improved Sobolev embedding, see lecture 32 of \cite{Tartar}, $\mathrm{W}^{1,2}\hookrightarrow \mathrm{L}^{4,2}$ and by Holder inequality on Lorentz spaces, see section 1.4 of \cite{Grafakos1}, $\mathrm{L}^{4,2}.\mathrm{L}^{4,\infty}\hookrightarrow \mathrm{L}^{2,2}=\mathrm{L}^2$. The proposition follows directly from \eqref{FAa}.
\end{proof}

\begin{proposition}
   If $A\in \mathfrak U_G(M)$ is a Yang-Mills connection, for all $a,b\in   \mathrm{W}^{1,2}(M, T^*M\otimes \mathfrak{g})$, \begin{equation}
        \mathrm{D}^2 \mathcal{YM}_{A} (a,b) =  \langle \diff_{A} a, \diff_{A} b \rangle + \langle F_{A}, [a, b] \rangle = \langle L_{A} a,b\rangle \label{variationseconde}
    \end{equation}
where
\[ L_{A} a = \diff_{A}^{\ast} \diff_{A} a +\star [\star F_{A}, a]. \]
\end{proposition}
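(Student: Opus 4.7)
The plan is to Taylor-expand $\mathcal{YM}$ around $A$ in two directions and extract the $st$-coefficient. Applying \eqref{FAa} with the perturbation $c = sa + tb$ yields
\[ F_{A+sa+tb} = F_A + s\,\diff_A a + t\,\diff_A b + s^2\, a\wedge a + st\,(a\wedge b + b\wedge a) + t^2\, b\wedge b, \]
so expanding the norm squared and collecting the $st$ term in $\tfrac12\int_M |F_{A+sa+tb}|^2\,\mathrm{vol}_h$ produces exactly two contributions: the cross pairing $\langle \diff_A a, \diff_A b\rangle$ of the linear pieces, and the coupling $\langle F_A, a\wedge b + b\wedge a\rangle$ of $F_A$ with the mixed quadratic term. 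Hence $\partial_s\partial_t\big|_{(0,0)}\mathcal{YM}(A+sa+tb) = \langle \diff_A a, \diff_A b\rangle + \langle F_A, a\wedge b + b\wedge a\rangle$. Since $a$ and $b$ are $\mathfrak g$-valued $1$-forms, the graded Lie bracket collapses to $[a,b] = a\wedge b + b\wedge a$, which produces the first equality of the statement.

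For the second equality, I would treat each term separately. The first is handled by the formal adjoint: $\langle \diff_A a, \diff_A b\rangle = \langle \diff_A^*\diff_A a, b\rangle$ as a distributional identity on the closed manifold $M$, which is legitimate with the Lorentz-space estimates used in the previous proposition. For the curvature coupling, I would establish the pointwise algebraic identity
\[ \langle F_A, [a, b]\rangle = \langle \star[\star F_A, a], b\rangle \]
by working in a local orthonormal frame: expand everything in components, apply the ad-invariance of the inner product on $\mathfrak g \subset \mathfrak{su}(n)$ in the form $\langle X, [Y, Z]\rangle_{\mathfrak g} = \langle [X, Y], Z\rangle_{\mathfrak g}$ to move the Lie bracket off of $[a,b]$, and recognize the result as a Hodge-star contraction using $\alpha \wedge \star\beta = \langle \alpha, \beta\rangle\,\mathrm{vol}_h$. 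Summing the two pieces identifies $\langle L_A a, b\rangle$ with $L_A a = \diff_A^* \diff_A a + \star[\star F_A, a]$.

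The only step requiring genuine care is this last algebraic identity. The work lies in aligning the graded sign conventions for the bracket of $\mathfrak g$-valued forms with the Hodge stars (in dimension $4$ one has $\star^2 = \mathrm{Id}$ on even-degree forms and $\star^2 = -\mathrm{Id}$ on odd-degree forms) and with the ad-invariance of the trace form; once written out in a frame the verification is routine. I observe in passing that the Yang-Mills hypothesis $\diff_A^* F_A = 0$ itself is not used in the computation of $L_A$: it only ensures that the linear term $D\mathcal{YM}_A$ vanishes, so that $D^2\mathcal{YM}_A$ is intrinsically the Hessian at $A$.
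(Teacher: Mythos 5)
Your computation of the first equality follows a genuinely different route from the paper. You extract the $st$-coefficient from the two-parameter Taylor expansion of $\mathcal{YM}(A+sa+tb)$ obtained by applying \eqref{FAa} with perturbation $sa+tb$, which yields the cross term $\langle \diff_A a, \diff_A b\rangle + \langle F_A, a\wedge b + b\wedge a\rangle$ directly. The paper instead substitutes $A\mapsto A+a$ into the already-proved first-variation identity $\mathrm{D}\mathcal{YM}_A(b)=\langle F_A,\diff_A b\rangle$, expands via $F_{A+a}=F_A+\diff_A a+a\wedge a$ and $\diff_{A+a}b=\diff_A b+[a,b]$, and reads off the part linear in $a$. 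The two are equivalent; yours is symmetric in $(a,b)$ from the outset, the paper's re-uses the first-variation proposition and makes the $O(\|a\|^2\|b\|)$ remainder explicit in $\mathrm{W}^{1,2}$. Both need the same Lorentz-space bookkeeping ($\mathrm{W}^{1,2}\hookrightarrow \mathrm{L}^{4,2}$, $\mathrm{L}^{4,2}\cdot\mathrm{L}^{4,2}\hookrightarrow\mathrm{L}^2$) to make sense of the pairings. For the second equality you propose a frame computation via ad-invariance of the inner product on $\mathfrak g$, whereas the paper carries out a short chain of trace/wedge manipulations culminating in $\tr([a,b]\wedge\star F_A)=\tr(b\wedge\star\star[\star F_A,a])$; both are valid, but you only describe the strategy rather than carry out the sign bookkeeping, which is the one place the paper is more complete. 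Your closing observation that $\diff_A^*F_A=0$ is not used in deriving \eqref{variationseconde} and serves only to interpret $\mathrm{D}^2\mathcal{YM}_A$ as a Hessian is correct and matches the paper's proof, which nowhere invokes the Yang--Mills equation.
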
 

\begin{proof} From \[ \diff_{A + a} b = \diff_{A} b + [a, b], \] \[ F_{A + a} - F_{A} = \diff_{A} a + a \wedge a, \] and the embedding $\mathrm{W}^{1, 2} \hookrightarrow \mathrm{L}^{4,2}$, we deduce, by substituting in \eqref{variationpremiere} : \begin{align*}
  \mathrm{D} \mathcal{YM}_{A + a} (b) & = \langle F_{A} + \diff_{A}
  a + a \wedge a, \diff_{A} b + [a, b] \rangle\\
  & = \mathrm{D} \mathcal{YM}_{A} (b) + \langle F_{A}, [a, b] \rangle +
  \langle \diff_{A} a, \diff_{A} b \rangle + \gdo{ \| a
  \|^2_{\mathrm{W}^{1, 2}} \| b\|_{\mathrm{W}^{1, 2}}}
\end{align*}
which implies
\[ \mathrm{D}^2 \mathcal{YM}_{A} (a, b) =    \langle \diff_{A} a, \diff_{A} b \rangle + \langle F_{A}, [a, b] \rangle. \]
Furthermore \begin{align*}
    \tr ([a, b] \wedge \star F_{A}) = \tr ([b, a] \wedge \star F_{A}) &=  \tr\left((b\wedge a + a\wedge b)\wedge \star F_{A} \right) \\
    &=  \tr\left(b\wedge (a\wedge \star F_{A})\right)  +\tr\left( a\wedge (b\wedge \star F_{A}) \right) \\
    &= \tr\left(b\wedge (a\wedge \star F_{A})\right)  -\tr\left( (b\wedge \star F_{A}) \wedge a \right)\\
    &=\tr (b \wedge [a ,\star F_{A}])\\
    &    = -\tr (b \wedge [\star F_{A},a])\\ &= \tr (b \wedge \star \star[\star F_{A},a])
\end{align*} and we obtain the following identity \[\langle F_{A}, [a, b] \rangle = \int_M \tr ([a, b] \wedge \star F_{A}) = \int_M  \tr (b \wedge \star \star[\star F_{A},a]) =  \langle b, \star [\star F_{A}, a] \rangle .\] Therefore the second variation has the following expression:
\[ \mathrm{D}^2 \mathcal{YM}_{A} (a, b) = \langle \diff_{A}^{\ast}
   \diff_{A} a + \star [\star F_{A}, a], b \rangle . \]
\end{proof}

\begin{definition}
	\label{defQ} $ Q_{A}$ is the quadratic form defined by
\[ Q_{A} (a) := \mathrm{D}^2 \mathcal{YM}_{A} (a, a) = \langle L_{A} a, a \rangle. \]
\end{definition}

\begin{remark} The functional $\mathcal{YM}$ is gauge invariant, so is its second derivative, namely \[Q_{A^g}( g^{-1} a g)=  Q_{A}( a) \text{ for all } g\in \mathrm{W}^{1,(4,\infty)}. \] This comes from the pointwise equality : 
\begin{equation}
	\begin{split}
\diff_{A^g} \left(g^{-1} a g\right)  &= d(g^{-1}ag)+[g^{-1}dg+g^{-1}Ag,g^{-1}ag]\\
&=\cancel{dg^{-1}\wedge ag}+g^{-1}dag\cancel{-g^{-1}a\wedge dg}+\cancel{g^{-1}dg \wedge g^{-1}ag}\\
&+g^{-1}A\wedge ag+\cancel{g^{-1}a\wedge dg}+g^{-1}a\wedge Ag\\
&=g^{-1}(da+{A,a})g\\
&= g^{-1}\left(\diff_A a\right) g .\label{gaugeinvariancediff}
\end{split}
\end{equation}

\end{remark}

A consequence of the gauge invariance is that $Q_{A}$ has a huge kernel, as shown in the following result :
\begin{proposition}
   \[ \Imag{\diff_{A}} \subset \ker Q_{A}\]
\end{proposition}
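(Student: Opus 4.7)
The proposition is the infinitesimal form of gauge invariance: since $g_t = \exp(t\chi)$ gives $\frac{d}{dt}\big|_{t=0} A^{g_t} = \diff_A \chi$ and $\mathcal{YM}$ is constant on gauge orbits, the vector $\diff_A \chi$ should lie in the kernel of the Hessian $L_A$ at the critical point $A$. I would prove this directly using the explicit formula for $L_A$.

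Substituting $a = \diff_A \chi$ into $L_A a = \diff_A^* \diff_A a + \star[\star F_A, a]$ and using the curvature identity $\diff_A^2 \chi = [F_A, \chi]$ (applied to the $\mathfrak{g}$-valued $0$-form $\chi$) yields
\[
L_A(\diff_A \chi) = \diff_A^* [F_A, \chi] + \star[\star F_A, \diff_A \chi].
\]
On the first summand I would combine three ingredients: (i) $\diff_A^* = -\star \diff_A \star$ on $2$-forms in dimension $4$; (ii) the $\mathfrak{g}$-equivariance of the Hodge star, namely $\star[F_A, \chi] = [\star F_A, \chi]$; and (iii) the graded Leibniz rule
\[
\diff_A [\star F_A, \chi] = [\diff_A \star F_A, \chi] + [\star F_A, \diff_A \chi].
\]
Since $\diff_A \star F_A = 0$ is equivalent to the Yang--Mills equation \eqref{main}, the first bracket vanishes, and the three ingredients combine to give $\diff_A^*[F_A, \chi] = -\star[\star F_A, \diff_A \chi]$. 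This cancels the second summand exactly, producing $L_A(\diff_A \chi) = 0$ and hence $\diff_A \chi \in \ker Q_A$.

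The main obstacle is regularity bookkeeping in the weak $\mathrm{W}^{1,2}$-connection setting: one must verify that each distributional identity is legitimate for $\chi$ in an appropriate Sobolev class, with both sides living in the same function space. Since $A \in \mathfrak{U}_G$, the Leibniz rules and $\diff_A^2 = [F_A,\cdot]$ hold distributionally via the Lorentz-space H\"older estimate $\mathrm{L}^{4,2} \cdot \mathrm{L}^{4,\infty} \hookrightarrow \mathrm{L}^2$ already invoked in the first-variation proof; a density argument from smooth $\chi$ then extends the inclusion to the full image of $\diff_A$ in $\mathrm{W}^{1,2}(M, T^*M \otimes \mathfrak{g})$.
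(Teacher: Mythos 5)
Your proof is correct, but it takes a genuinely different route from the paper's. The paper argues at the level of the functional: since $A^{g_t}$ is Yang--Mills for every small $t$, the first variation $\mathrm{D}\mathcal{YM}_{A^{g_t}}(a)$ vanishes identically in $t$; differentiating at $t=0$ and using $\frac{\diff}{\diff t}\big|_{t=0}A^{g_t} = \diff_A\phi$ immediately gives $\mathrm{D}^2\mathcal{YM}_A(\diff_A\phi, a) = 0$, with no need to write out the Jacobi operator $L_A$. You instead verify $L_A(\diff_A\chi)=0$ by explicit algebra, combining $\diff_A^2\chi = [F_A,\chi]$, the formula $\diff_A^* = -\star\diff_A\star$ on $2$-forms in dimension $4$, equivariance of $\star$ under the pointwise adjoint action, the graded Leibniz rule for $\diff_A$, and the Yang--Mills equation in the form $\diff_A\star F_A = 0$; the signs and cancellation check out. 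Both arguments use the Euler--Lagrange equation in an essential way (the paper's through the vanishing first variation at each $A^{g_t}$, yours through $\diff_A\star F_A = 0$). The paper's route is shorter, independent of Hodge-star sign conventions, and works verbatim for any gauge-invariant functional; your direct calculation has the virtue of producing a pointwise identity and of exposing exactly which Bianchi/Leibniz identities place $\Imag\diff_A$ inside $\ker L_A$. Your closing remark on regularity is apt: both proofs ultimately rely on the $\mathrm{L}^{4,2}\cdot\mathrm{L}^{4,\infty}\hookrightarrow\mathrm{L}^2$ estimate and density of smooth $\chi$ to justify the distributional identities in $\mathfrak U_G$.
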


\begin{proof}
Let $\phi \in \mathcal{C}^\infty_c (M, \mathfrak{g})$,we introduce $g_t = \exp(t \phi)$ (well-defined and $G$-valued if $| t |$ is small enough). Since $A^{g_t}$ is a Yang-Mills connection, for all $a \in  \mathrm{W}^{1,2}(M, T^*M\otimes \mathfrak{g})$, \[ \mathrm{D} \mathcal{YM}_{A^{g_t}} (a) = 0, \] which implies, by differentiating with respect to $t$ and evaluating at $t=0$ \[ \mathrm{D}^2 \mathcal{YM}_{A} \left(\left( \frac{\diff}{\diff t}A^{g_t}\right)_{|t = 0}  , a \right) = 0. \] Furthermore, since $g_t= \mathrm{id}+ t \phi +o(t)$ and $\diff g_t = t \diff \phi +o(t)$, we get \begin{align*}
    A^{g_t} = g_t ^{-1} A g_t + g_t^{-1}\diff g_t &= (\mathrm{id} -t \phi +o(t))A(\mathrm{id} +t \phi +o(t)) +(\mathrm{id} + o(1)) (t \diff \phi +o(t))\\ &= A + t \left([A,\phi] +\diff \phi\right) +o(t) \\ &= A +t\, \diff_A\phi +o(t). 
\end{align*} Hence $\displaystyle \left(\frac{\diff}{\diff t}A^{g_t}\right)_{|t = 0} = \diff_A \phi$ and \[ \mathrm{D}^2 \mathcal{YM}_{A} (\diff_{A} \phi, a) = 0 \] \textit{i.e.} $\Imag \diff_{A} \subset \ker Q_{A}$. 
\end{proof}

\noindent Recall the index of a quadratic form $q$ is the non-negative integer $\ind q$ defined as \[ \ind q = \sup \left\{ \dim W \mid q_{|W} < 0\right\}.\]

\begin{proposition} Let $\mathcal{L}_{A} a := \left(\diff_{A}^* \diff_{A} + \diff_{A} \diff_{A}^*\right) a +  \star [\star F_{A}, a] $ and $\mathcal{Q}_{A}(a) =   \langle \mathcal{L}_{A} a , a \rangle$. The index and kernel of $Q_A$ can be expressed from those of $\mathcal{Q}_A$ as follows : \begin{align*}
    \ind Q_{A} &= \ind {\mathcal{Q}_{A}}\\ 
    \ker Q_{A} &=  \ker {\mathcal{Q}_{A}} \oplus  \Imag \diff_{A}  
\end{align*}
\end{proposition}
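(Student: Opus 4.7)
The plan is to split $W^{1,2}(M, T^*M \otimes \mathfrak g)$ into the $L^2$-orthogonal pieces $\ker \diff_A^*$ and $\Imag \diff_A$ and to compute $Q_A$ and $\mathcal Q_A$ simultaneously in this splitting. After fixing a smooth Uhlenbeck gauge for $A$ (Theorem \ref{uhlenbeckgauge} and Theorem \ref{Ereg}), the twisted Laplacian $\diff_A^* \diff_A$ on sections of the adjoint bundle is a self-adjoint second-order elliptic operator whose kernel consists of the $A$-parallel sections. For $a \in W^{1,2}$, $\diff_A^* a \in L^2$ lies in the range of $\diff_A^*$ and is $L^2$-orthogonal to the parallel sections, so the Fredholm alternative yields $\phi$ with $\diff_A^* \diff_A \phi = \diff_A^* a$, and $b := a - \diff_A \phi$ satisfies $\diff_A^* b = 0$. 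This gives the Coulomb decomposition $a = b + \diff_A \phi$.

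I would then evaluate both quadratic forms on this decomposition. The key algebraic input is the identity $L_A \diff_A = 0$ from the preceding proposition, which upon taking adjoints of the self-adjoint $L_A$ yields $\diff_A^* L_A = 0$, hence $L_A b \in \ker \diff_A^*$ whenever $b$ is. Consequently the cross term $\langle L_A b, \diff_A \phi \rangle = \langle b, L_A \diff_A \phi \rangle$ vanishes, and since $\mathcal L_A b = L_A b$ on $\ker \diff_A^*$ while $\mathcal L_A(\diff_A \phi) = \diff_A \diff_A^* \diff_A \phi$ lies in $\Imag \diff_A$, one obtains
\[ Q_A(a) = Q_A(b), \qquad \mathcal Q_A(a) = Q_A(b) + \|\diff_A^* \diff_A \phi\|^2. \]
Thus $Q_A$ vanishes on $\Imag \diff_A$, $\mathcal Q_A$ is positive definite there (if $\diff_A^* \diff_A \phi = 0$ then $\diff_A \phi = 0$ after pairing with $\phi$), and both forms agree on $\ker \diff_A^*$.

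The index equality becomes tautological from this block form: any subspace $W$ negative definite for $Q_A$ (resp.\ $\mathcal Q_A$) meets $\Imag \diff_A$ only at $0$ since the form is nonnegative there, so the projection $\pi: W \to \ker \diff_A^*$ along $\Imag \diff_A$ is injective, and negativity is preserved under $\pi$ by the formulas above. Hence $\ind Q_A = \ind \mathcal Q_A = \ind(Q_A|_{\ker \diff_A^*})$. For the kernel identity, applied to $a \in \ker \mathcal Q_A$ the equation $\mathcal L_A a = 0$ rewrites as an $L^2$-orthogonal sum $L_A b + \diff_A(\diff_A^* \diff_A \phi) = 0$ with $L_A b \in \ker \diff_A^*$ and $\diff_A(\diff_A^* \diff_A \phi) \in \Imag \diff_A$; both terms must vanish, and one integration by parts then forces $\diff_A \phi = 0$, so $\ker \mathcal Q_A \subset \ker \diff_A^* \cap \ker Q_A$. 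Conversely, for $a \in \ker Q_A$ decomposed as $a = b + \diff_A \phi$, the identity $L_A a = L_A b = 0$ combined with $\diff_A^* b = 0$ gives $\mathcal L_A b = 0$, so $a \in \ker \mathcal Q_A \oplus \Imag \diff_A$; the sum is direct by what was just shown.

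The main technical obstacle I foresee is the rigorous setup of the Coulomb decomposition in the first step, which requires enough regularity of $A$ for the elliptic Fredholm theory of the twisted Laplacian $\diff_A^* \diff_A$ on the adjoint bundle to apply (furnished by the Uhlenbeck gauge and $\varepsilon$-regularity) together with a careful verification that the $L^2$-orthogonal splitting still respects the $W^{1,2}$ topology. Once that splitting is in place, the rest reduces to the algebra of two quadratic forms that diagonalize simultaneously, with one block common and one block differing by a manifestly nonnegative term.
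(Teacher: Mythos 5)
Your argument is correct and follows essentially the same route as the paper: both proofs decompose a $1$-form $a$ as $b + \diff_A\phi$ with $b\in\ker\diff_A^*$ and $\diff_A\phi\in\Imag\diff_A$, exploit $\Imag\diff_A\subset\ker Q_A$, and use $\mathcal{Q}_A = Q_A + \|\diff_A^*\cdot\|^2$, which coincides with $Q_A$ on $\ker\diff_A^*$. Your presentation packages this as a simultaneous block diagonalization of the two forms with one common block and one manifestly nonnegative block, whereas the paper chains three index inequalities and then a short bilinear-form computation for the kernel; the content is the same, and your extra care with the Coulomb decomposition (Fredholm theory for $\diff_A^*\diff_A$ after fixing a good gauge) makes explicit a step the paper takes for granted when it speaks of orthogonal projection onto $\ker\diff_A^* = (\Imag\diff_A)^\perp$.
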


\begin{proof} Since $\mathcal{Q}_A(a) = \Vert \diff_A^* a \Vert^2 + Q_A(a)$, we have $\mathcal{Q}_A \geqslant Q_A$ and then
\begin{equation} \label{inegaliteindice1}
    \ind \mathcal{Q}_A \leqslant \ind Q_A.
\end{equation} Moreover, since $Q_{A}$ and $\mathcal{Q}_{A}$ coincide on $\ker \diff_{A}^{\ast} = \left( \Imag \diff_A\right)^\perp$, we also have \begin{equation}\label{inegaliteindice2}
    \ind {Q_{A}}_{|\ker \diff_{A}^*} \leqslant \ind \mathcal{Q}_A.
\end{equation} Furthermore, if $W$ is a finite-dimensional subspace on which $Q_A$ is negative-definite, with a $Q_A$-orthogonal base being $(\phi_1,\dots,\phi_n)$, then defining $(\psi_1,\dots,\psi_n)$ their orthogonal projections on $\ker \diff_A^*$, there exist $\eta_1,\dots,\eta_n$ such that for all $i\in\llbracket 1,n\rrbracket$, $\phi_i = \psi_i + \diff_A \eta_i$. Since $\Imag\diff_A \subset \ker Q_A$, the family $(\psi_1,\dots,\psi_n)$ is still $Q_A$-orthogonal (thus linearly independent) and $Q_A$ is negative-definite on their linear span (which is a subset of $\ker \diff_A^*$). We conclude that the following inequality holds : \begin{equation}\label{inegaliteindice3}
    \ind Q_A \leqslant \ind {Q_A}_{|\ker \diff_{A}^*} .
\end{equation} We obtain the first expected equality by combining \eqref{inegaliteindice1},\eqref{inegaliteindice2} and \eqref{inegaliteindice3}.

\noindent For the second point, we already have $\ker {Q_{A}}\cap {\ker \diff_{A}^*} = \ker {\mathcal{Q}_{A}}\cap {\ker \diff_{A}^*}$ and so \[\ker {Q_{A}} =\left( \ker {\mathcal{Q}_{A}}\cap {\ker \diff_{A}^*}\right)\oplus \Imag \diff_A.\] The only thing left to check is \[\ker \mathcal{Q}_{A} \subset \ker \diff_{A}^*.\] If $a\in \ker \mathcal{Q}_A$, we can write $a=b+c$ with $b\in \ker \diff_A^*$ and $c\in \Imag \diff_A$. Using the fact that $c$ is in the kernel $Q_A$ and $\diff_A^*c=\diff_A^*a$, writing the bilinear forms associated to $Q_A$ and $\mathcal{Q}_A$  as $B$ and $\mathcal{B}$, we get \[0=\mathcal{B}(a,c) = B(a,c) + \langle \diff_A^*a,\diff_A^*c \rangle = 0 + \Vert \diff_A^*a\Vert^2.  \] Therefore $\diff_A^*a = 0$.
\end{proof}

\begin{propdef} \label{finitudeindice} Let $\varsigma(A)$ be the extended signature $\ind {Q_{A}} + \dim \left( \ker {Q_{A}}\cap {\ker \diff_{A}^*}\right)$ of the quadratic form $Q_A$. Then \[\varsigma(A) = \ind {\mathcal{Q}_{A}} + \dim  \ker {\mathcal{Q}_{A}}\] and $\varsigma(A)$ is finite.
\end{propdef} 

\begin{proof} The equality comes from the previous proposition. We shall then prove the finiteness of $\varsigma(A)$. Let $\mathcal{E}(\lambda)$ be the (possibly trivial) eigenspace associated with the eigenvalue $\lambda$ of the non-negative elliptic operator $\diff_{A}^* \diff_{A} + \diff_{A} \diff_{A}^*$ . For all $\Lambda\in\R$, we denote \[ \mathcal{E}^{\Lambda} = \bigoplus_{\lambda \leqslant \Lambda} \mathcal{E}(\lambda).
\]
For all $\Lambda \in \R$, $\dim \mathcal{E}^\Lambda < +\infty$. We choose $\Lambda > C \Vert F_{A} \Vert_{\mathrm{L}^\infty(M)}$ where $C$ is such that \[\left| \int_M \langle F_{A}, [a, a] \rangle_h \mathrm{vol}_h \right| \leqslant C  \int_M |F_{A}|_h\, |a|^2_h \mathrm{vol}_h.\]

\noindent If $a\in  \left(\mathcal{E}^\Lambda\right)^\perp$, then \begin{align*}
    \mathcal{Q}_{A}(a) &\geqslant \Lambda \int_M  |a|^2_h \mathrm{vol}_h + \int_M \langle F_{A}, a\wedge a \rangle_h \mathrm{vol}_h\\*
    &\geqslant \int_M \left(\Lambda - C|F_{A}|_h\right) |a|^2_h \mathrm{vol}_h.
\end{align*} We conclude that if $W$ a subspace on which $\mathcal{Q}_{A}$ is negative-definite, $W\cap \left(\mathcal{E}^\Lambda\right)^\perp = \{0\}$. Consequently, $\dim W \leqslant \mathrm{codim}\left(\mathcal{E}^\Lambda\right)^\perp = \dim \mathcal{E}^\Lambda <+\infty$ so $\ind \mathcal{Q}_{A} < +\infty$. A similar argument can be applied to prove that $\ker \mathcal{Q}_{A}$ is also finite dimensional.
\end{proof}

\section{Contribution of the necks to the second variation}
\subsection{Positive contribution of the necks}

{~}

In order to study the behavior of the second variation in the neck regions, the method of Da Lio-Gianocca-Rivière requires sharp pointwise estimates, in contrast to the one coming from the $\varepsilon$-regularity result. Application of $\epsilon$-regularity, see theorem \ref{Ereg}, gives:  for all $R>r>0$ and for all Yang-Mills connection $A$ on $\mathrm{B}_{2R}\backslash \mathrm{B}_{r/2}$, if $E = \| F_{A}
\|_{\mathrm{L}^2(\mathrm{B}_{2R}\backslash \mathrm{B}_{r/2})}\leqslant \varepsilon$ for $x\in \mathrm{B}_{R}\backslash \mathrm{B}_{r}$, \begin{equation} \label{epsreg}
    |F_{A} (x)|\leqslant C \frac{E}{|x|^2} .
\end{equation}

We will explain in the following section why such estimates is not enough in the context of Yang-Mills connections. In other frameworks \cite{daliorivieregianocca2022morse,michelat2023morseWillmore,  michelat2023morseBiharmonic}, refined estimated were obtained using improved energy quantization in Lorentz's spaces. Even if $\mathrm{L}^{2,1}$-energy quantization for Yang-Mills connection is known by the authors \cite{memoireGauvrit}, such estimates can be obtained independently: the following result was from proved in \cite{RadeDecayEstimates} in the flat case and the technical details to extend it to the general case were studied in \cite[theorem I.1]{GroissierParkerSharpEstimates}. For the sake of clarity, we will state the results of this section in the Euclidean framework. They remain true on small enough geodesic balls of a Riemaniann manifold.

\begin{theorem}
    There exist $C,\varepsilon>0$, such that, for all $R>r>0$ and for all Yang-Mills connection $A$ on $\mathrm{B}_{2R}\backslash \mathrm{B}_{r/2}$, if $E = \| F_{A}
\|_{\mathrm{L}^2(\mathrm{B}_{2R}\backslash \mathrm{B}_{r/2})}\leqslant \varepsilon$ then  for $x\in \mathrm{B}_{R}\backslash \mathrm{B}_{r}$, \begin{equation}
    |F_{A} (x)| \leqslant C \frac{E}{|x|^2}\left( \left(\frac{|x|}{R}\right)^2 + \left(\frac{r}{|x|}\right)^2 \right)
\end{equation} \label{sharpestimateneck}
\end{theorem}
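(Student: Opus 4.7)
The plan is to reduce the sharp decay estimate to a spectral-gap analysis on the cylinder $\mathbb{R}\times S^3$. Let $T_- = \log(r/2)$ and $T_+ = \log(2R)$, and use the conformal change of variables $x = e^t\omega$ to identify $\mathrm{B}_{2R}\setminus \mathrm{B}_{r/2}$ with the cylindrical segment $\mathcal{C} = [T_-,T_+]\times S^3$. Since Yang-Mills is conformally invariant in dimension $4$, a Yang-Mills connection on the annulus with curvature bound $E$ corresponds to a Yang-Mills connection on $\mathcal{C}$ with total energy $E^2$ on each cross-section $\{t\}\times S^3$ bounded after integration.

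First, I would construct a good global gauge on the annulus. Covering $\mathrm{B}_{2R}\setminus \mathrm{B}_{r/2}$ by a chain of dyadic annuli $\mathrm{B}_{2^{k+1}r_0}\setminus \mathrm{B}_{2^{k-1}r_0}$ (each conformally equivalent to a fixed standard annulus, hence with energy below $\varepsilon$ by the subthreshold hypothesis), apply Uhlenbeck's gauge theorem \ref{uhlenbeckgauge} on each and patch by transition functions close to the identity to produce a single $\mathrm{W}^{1,2}$ Coulomb gauge $A$ on the whole annulus. In cylindrical coordinates this gauge satisfies $d^*A = 0$ with $\|A\|_{\mathrm{W}^{1,2}(\mathcal{C})}\lesssim E$.

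The heart of the argument is then spectral. Decompose $A$ and its curvature into Fourier modes along $S^3$, writing $A = \sum_\lambda A_\lambda(t)\phi_\lambda(\omega)$ where $\phi_\lambda$ runs over a basis of $\mathfrak{g}$-valued $1$-forms on $S^3$ diagonalizing the Hodge–de Rham Laplacian. The linearized Yang-Mills system $d^*_A d_A A + \star[\star F_A, A] = 0$ combined with $d^*A = 0$ becomes on each Fourier mode an ODE of the form $-\ddot{A}_\lambda + \mu_\lambda A_\lambda = N_\lambda(A,A)$ where $\mu_\lambda$ is a non-negative eigenvalue and $N_\lambda$ is a quadratic nonlinearity. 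The key spectral fact is that on $S^3$ the eigenvalues $\mu_\lambda$ relevant to $F_A$ (after quotienting by the gauge directions, i.e. the kernel of $d$) are bounded below by $4$, giving a decay rate $e^{-2|t-t_\ast|}$, which in the original $|x|$-scale corresponds exactly to the quadratic factor $(|x|/R)^2 + (r/|x|)^2$. One writes each mode as a sum of exponentially growing and decaying solutions, and the fact that the total $L^2$ energy is bounded by $E^2$ forces the growing mode from the $T_-$ end to contribute only $(r/|x|)^2$ and the growing mode from the $T_+$ end only $(|x|/R)^2$.

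The nonlinearity $N_\lambda$ is handled by a bootstrap using smallness of $E$: the quadratic terms are absorbed provided $E \leq \varepsilon$ for $\varepsilon$ small enough, following Råde's analysis in the flat case, and Groisser–Parker's extension handles the geometric error terms from a non-flat metric as perturbations. The main obstacle is the zero mode (the rotationally invariant part), which a priori does not decay: here one exploits that in the Coulomb gauge the zero mode of $d^*A = 0$ reduces $A$ to a radial abelian piece, and the Yang-Mills equation forces its curvature component to be of the form $c/|x|^2 \, dt\wedge \eta$ with $c$ constrained by the $L^2$ bound on a long annulus to vanish or be absorbed into the bound. Finally, once the integrated $L^2$-decay on each sphere $\partial \mathrm{B}_s$ is established, the pointwise statement \eqref{epsreg} can be upgraded: applying Theorem \ref{Ereg} on the rescaled sub-annulus $\mathrm{B}_{2|x|}\setminus \mathrm{B}_{|x|/2}$ converts the sharp $L^2$ bound into the sharp $L^\infty$ bound.
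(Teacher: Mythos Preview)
The paper does not prove this theorem itself; it is quoted from R\aa de \cite{RadeDecayEstimates} for the flat case and Groisser--Parker \cite{GroissierParkerSharpEstimates} for the extension to a general metric. Your sketch is essentially R\aa de's cylindrical proof: the conformal transfer to $\mathbb{R}\times S^3$, the construction of a global Coulomb gauge by patching dyadic Uhlenbeck gauges, and the spectral-gap argument exploiting that the Hodge Laplacian on coexact $1$-forms on $S^3$ has lowest eigenvalue $4$, giving exponential rate $2$ in $t=\log|x|$ and hence the factor $(|x|/R)^2+(r/|x|)^2$. So your approach is precisely the one the paper invokes by citation.

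Two minor remarks on the sketch. First, the zero-mode discussion is unnecessary: since $H^1(S^3)=H^2(S^3)=0$ there is no harmonic component on the cross-section to obstruct the decay, and the ``radial abelian piece'' you describe is automatically excluded. Second, it is cleaner to run the ODE analysis directly on the curvature $F_A$ (which satisfies $\Delta_A F_A=0$ by the combination of the Bianchi identity and the Yang--Mills equation) rather than on the connection form, since the statement is about $|F_A|$ and this avoids entangling the decay with the gauge choice. Your final step, upgrading the integrated $L^2$ decay on dyadic shells to the pointwise bound via $\varepsilon$-regularity on $\mathrm{B}_{2|x|}\setminus \mathrm{B}_{|x|/2}$, is exactly how the paper itself proceeds in the proof of Corollary~\ref{neckC0estimates}.
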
 Let $R>r>0$. In the following, we will write $\omega_{R,r }(x):= \frac{1}{| x |^2} \left( \left(
  \frac{| x |}{R} \right)^{ 2} + \left( \frac{r}{| x |} \right)^{ 2}   \right)$. before to extend this estimate to the connection itself, we give a lemma that allow to cut-off a $\mathrm{W}^{1,2}$-connection in the neck region.
  
  \begin{lemma} \label{cutoffcourbure}
  	Let $A\in \mathcal  A_G^{1,2}(\mathrm{B}_R)$ with $0<R<1$, there exists $C>0$, such that for  $0<r<R$ there exists  $\chi : \mathrm{B}_R\rightarrow [0,1]$ such that $\chi \equiv 1$ on $\mathrm{B}_{R}\setminus \mathrm{B}_r$ and $\mathrm{supp} \chi \subset \mathrm{B}_R \setminus \mathrm{B}_{r/2}$, such that 
  	$$\Vert F_{\chi A}\Vert_{\mathrm{L}^2(\mathrm{B}_R)} \leq  \Vert F_A \Vert_{\mathrm{L}^2(\mathrm{B}_R\setminus \mathrm{B}_{r/2})}  + C \Vert A\Vert_{\mathrm{W}^{1,2}(\mathrm{B}_r\setminus \mathrm{B}_{r/2})}$$ 
  \end{lemma}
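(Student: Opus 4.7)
The strategy is to take a standard radial cut-off $\chi$, rewrite the curvature $F_{\chi A}$ algebraically in terms of $F_A$, and absorb the error terms supported in the transition annulus $\Omega_r:=\mathrm{B}_r\setminus \mathrm{B}_{r/2}$ into the $\mathrm{W}^{1,2}$-norm of $A$ on that annulus.

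First, I choose $\chi:\mathrm{B}_R\to[0,1]$ smooth and radial with $\chi\equiv 1$ on $\mathrm{B}_R\setminus \mathrm{B}_r$, $\mathrm{supp}\,\chi\subset \mathrm{B}_R\setminus \mathrm{B}_{r/2}$ and $|\diff\chi|\leq C/r$ for a universal constant. Expanding using $F_A=\diff A+A\wedge A$ gives the pointwise identity
\[
F_{\chi A}=\diff(\chi A)+\chi A\wedge\chi A=\chi F_A+\diff\chi\wedge A-\chi(1-\chi)\,A\wedge A.
\]
The first summand equals $F_A$ on $\mathrm{B}_R\setminus \mathrm{B}_r$ and $F_{\chi A}$ itself vanishes on $\mathrm{B}_{r/2}$ because $\chi\equiv 0$ there; the two remaining transition terms are supported in $\Omega_r$. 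Splitting the integration domain therefore yields
\[
\|F_{\chi A}\|_{\mathrm{L}^2(\mathrm{B}_R)}^2=\|F_A\|_{\mathrm{L}^2(\mathrm{B}_R\setminus \mathrm{B}_r)}^2+\|F_{\chi A}\|_{\mathrm{L}^2(\Omega_r)}^2.
\]

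Next, on $\Omega_r$ the triangle inequality and the bound $\chi(1-\chi)\leq 1/4$ give
\[
\|F_{\chi A}\|_{\mathrm{L}^2(\Omega_r)}\leq \|F_A\|_{\mathrm{L}^2(\Omega_r)}+\|\diff\chi\wedge A\|_{\mathrm{L}^2(\Omega_r)}+\tfrac14\|A\wedge A\|_{\mathrm{L}^2(\Omega_r)}.
\]
By H\"older, $\|A\wedge A\|_{\mathrm{L}^2(\Omega_r)}\leq \|A\|_{\mathrm{L}^4(\Omega_r)}^2$ and $\|\diff\chi\wedge A\|_{\mathrm{L}^2(\Omega_r)}\leq \|\diff\chi\|_{\mathrm{L}^4(\Omega_r)}\|A\|_{\mathrm{L}^4(\Omega_r)}$; the pointwise bound $|\diff\chi|\leq C/r$ together with $|\Omega_r|\simeq r^4$ yields $\|\diff\chi\|_{\mathrm{L}^4(\Omega_r)}\leq C$ uniformly in $r$. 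The critical Sobolev embedding $\mathrm{W}^{1,2}\hookrightarrow \mathrm{L}^4$ in dimension four, applied on $\Omega_r$, then provides $\|A\|_{\mathrm{L}^4(\Omega_r)}\leq C\|A\|_{\mathrm{W}^{1,2}(\Omega_r)}$, so that
\[
\|F_{\chi A}\|_{\mathrm{L}^2(\Omega_r)}\leq \|F_A\|_{\mathrm{L}^2(\Omega_r)}+C\|A\|_{\mathrm{W}^{1,2}(\Omega_r)}.
\]

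Finally, I square this estimate, insert it into the decomposition above, and use the trivial bound $\|F_A\|_{\mathrm{L}^2(\Omega_r)}\leq \|F_A\|_{\mathrm{L}^2(\mathrm{B}_R\setminus \mathrm{B}_{r/2})}$ to reconstruct a perfect square
\[
\|F_{\chi A}\|_{\mathrm{L}^2(\mathrm{B}_R)}^2\leq \bigl(\|F_A\|_{\mathrm{L}^2(\mathrm{B}_R\setminus \mathrm{B}_{r/2})}+C\|A\|_{\mathrm{W}^{1,2}(\Omega_r)}\bigr)^2,
\]
which, taking square roots, is precisely the claimed inequality. The only real technicality is ensuring that the Sobolev constant on the thin annulus $\Omega_r$ does not blow up as $r\to 0$: this is handled by rescaling $\Omega_r$ to the fixed annulus $\mathrm{B}_1\setminus \mathrm{B}_{1/2}$ (on which the embedding holds with a universal constant) and exploiting that $\mathrm{L}^4$ and the quadratic term $A\wedge A$ are conformally critical in dimension four, so the $r$-dependent scaling factors cancel in the combination of terms appearing above.
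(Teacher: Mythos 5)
Your proof is correct, and the overall strategy (radial cut-off, algebraic decomposition of $F_{\chi A}$, H\"older plus Sobolev on the transition annulus) matches the paper's. However, the two proofs use genuinely different decompositions, and the difference has a real consequence. The paper writes
\[
F_{\chi A} \;=\; \chi^2 F_A \;+\; (\chi-\chi^2)\,\diff A \;+\; \diff\chi\wedge A,
\]
so that \emph{both} error terms $(\chi-\chi^2)\diff A$ and $\diff\chi\wedge A$ are \emph{linear} in $A$; the first is bounded directly by $\|\diff A\|_{\mathrm{L}^2(\Omega_r)}$ and the second by $(4/r)\|A\|_{\mathrm{L}^2(\Omega_r)}\lesssim \|A\|_{\mathrm{L}^4(\Omega_r)}$, and Sobolev finishes with a constant that does not involve $A$. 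You instead factor out $\chi F_A$ rather than $\chi^2 F_A$, which produces the quadratic remainder $\chi(1-\chi)\,A\wedge A$; your bound then contains $\|A\|_{\mathrm{L}^4(\Omega_r)}^2\lesssim \|A\|_{\mathrm{W}^{1,2}(\Omega_r)}^2$, and to put this into the claimed \emph{linear} form $C\|A\|_{\mathrm{W}^{1,2}(\Omega_r)}$ you must use $\|A\|_{\mathrm{W}^{1,2}(\Omega_r)}^2\leq \|A\|_{\mathrm{W}^{1,2}(\mathrm{B}_R)}\,\|A\|_{\mathrm{W}^{1,2}(\Omega_r)}$ and absorb $\|A\|_{\mathrm{W}^{1,2}(\mathrm{B}_R)}$ into the constant. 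This is admissible because the lemma quantifies $C$ after $A$, but you do not say it explicitly, and it makes the constant $A$-dependent where the paper's argument gives one depending only on $G$ and the dimension. Finally, the ``perfect square'' reassembly at the end is an unnecessary detour: once you have $\|F_{\chi A}\|_{\mathrm{L}^2(\Omega_r)}\leq\|F_A\|_{\mathrm{L}^2(\Omega_r)}+C\|A\|_{\mathrm{W}^{1,2}(\Omega_r)}$, a single triangle inequality on all of $\mathrm{B}_R$ applied to $F_{\chi A}=\chi F_A+(\text{errors supported in }\Omega_r)$, together with $\|\chi F_A\|_{\mathrm{L}^2(\mathrm{B}_R)}\leq\|F_A\|_{\mathrm{L}^2(\mathrm{B}_R\setminus\mathrm{B}_{r/2})}$, gives the result directly, which is what the paper does.
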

  
  \begin{proof}
  	Let $\chi$ a cut-off as in the theorem, with $\Vert d\chi\Vert_\infty \leq 4/r$  , then
  	\begin{align*}
  		F_{\chi A}&=d\chi A +\chi dA + \chi^2 A\wedge A\\
  		&= \chi^2 F_A + (\chi-\chi^2)dA+ d\chi A
  	\end{align*}
  	Then we have 
  	$$\Vert F_{\chi A}\Vert_{\mathrm{L}^2(\mathrm{B}_R)} \leq  \Vert F_A \Vert_{\mathrm{L}^2(\mathrm{B}_R\setminus \mathrm{B}_{r/2})}+\Vert dA\Vert_{\mathrm{L}^2(\mathrm{B}_r\setminus \mathrm{B}_{r/2})} +\frac{4}{r}\Vert A\Vert_{\mathrm{L}^2(\mathrm{B}_r\setminus \mathrm{B}_{r/2})}$$
  	But, by Cauchy-Schwartz,
  	$$\Vert A\Vert_{\mathrm{L}^2(\mathrm{B}_r\setminus \mathrm{B}_{r/2})} \leq  C r \Vert A\Vert_{\mathrm{L}^4(\mathrm{B}_r\setminus \mathrm{B}_{r/2})}$$
  	which achieves the proof thanks Sobolev's embedding.
  \end{proof}

  From the $\varepsilon$-regularity estimate and theorem \ref{sharpestimateneck}, we deduce the following result:
  \begin{corollary} \label{neckC0estimates} There exists $C,\varepsilon$, such that, for all $R>r>0$ and for all connection $A\in \mathfrak U_G(\mathrm{B}_{4R})$ satisfying the Yang-Mills equation on $\mathrm{B}_{2R}\backslash \mathrm{B}_{r/2}$, if $E = \| F_{A}
\|_{\mathrm{L}^2(\mathrm{B}_{4R}\backslash \mathrm{B}_{r/8})}\leqslant \varepsilon$, there exists a gauge $g\in \mathrm{W}^{1,(4,\infty)}(\mathrm{B}_{R}\setminus \mathrm{B}_{r})$ in which, for all $x\in \mathrm{B}_{R}\backslash \mathrm{B}_{r}$, \begin{equation}
    |F_A (x)| + |A^g(x)|^2  \leqslant C E\omega_{R,r }(x).
\end{equation} 
\end{corollary}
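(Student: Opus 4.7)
The pointwise bound on $|F_A|$ is immediate: apply Theorem \ref{sharpestimateneck} to $A$ on $\mathrm{B}_{2R}\setminus \mathrm{B}_{r/2}\subset \mathrm{B}_{4R}\setminus\mathrm{B}_{r/8}$, where the small-energy hypothesis $E\leq\varepsilon$ is in force. The content of the corollary thus lies in producing a single gauge $g$ on $\mathrm{B}_R\setminus \mathrm{B}_r$ in which the same decay $E\,\omega_{R,r}$ controls $|A^g|^2$ pointwise.

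My approach is to work dyadically and then patch. Introduce scales $\rho_k := 2^k r$ for $k$ such that $r\leq\rho_k\leq R/2$, and for each $\rho=\rho_k$ set $A_\rho := \mathrm{B}_{2\rho}\setminus\mathrm{B}_{\rho/2}$. Rescaling by $\rho$, the connection $\hat A(y) := \rho\, A(\rho y)$ is Yang--Mills on the fixed annulus $\mathrm{B}_2\setminus\mathrm{B}_{1/2}$ by conformal invariance, and Theorem \ref{sharpestimateneck} rescales to $\|F_{\hat A}\|_{\mathrm{L}^2(\mathrm{B}_2\setminus\mathrm{B}_{1/2})}\leq CE\,\delta(\rho)$ with $\delta(\rho) := (\rho/R)^2 + (r/\rho)^2\leq 2$. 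Cover $\mathrm{B}_2\setminus \mathrm{B}_{1/2}$ by finitely many balls of radius $1/4$; on each, provided $\varepsilon$ is small enough, Theorem \ref{uhlenbeckgauge} produces a Coulomb gauge for $\hat A$ and Theorem \ref{Ereg} upgrades the $\mathrm{W}^{1,2}$-bound into the $L^\infty$-bound $\|\hat A^{\hat g}\|_{\mathrm{L}^\infty}\leq CE\,\delta(\rho)$. Undoing the rescaling, we obtain local gauges $g_\rho$ on $A_\rho$ with $|A^{g_\rho}(x)|\leq CE\,\delta(\rho)/\rho$ for $|x|\sim\rho$. Since $\omega_{R,r}(x)$ is comparable to $\delta(\rho)/\rho^2$ on this annulus, squaring and using the smallness $E\,\delta(\rho)\leq 2\varepsilon$ gives
$|A^{g_\rho}(x)|^2 \leq CE^2\delta(\rho)^2/\rho^2 \leq C\,E\cdot (E\delta(\rho))\cdot \omega_{R,r}(x) \leq C'\,E\,\omega_{R,r}(x),$
which is the desired pointwise inequality on each dyadic annulus.

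I expect the main obstacle to be the patching of the dyadic family $\{g_\rho\}$ (and of the finitely many local gauges making up each $g_\rho$) into a single $g\in \mathrm{W}^{1,(4,\infty)}(\mathrm{B}_R\setminus\mathrm{B}_r)$ without losing the pointwise bound. On an overlap, the transition $h$ between two local Coulomb gauges $g^{(1)},g^{(2)}$ satisfies $h^{-1}\diff h = A^{g^{(1)}} - h^{-1}A^{g^{(2)}}h$, whose right-hand side is already controlled in $L^\infty$ by the estimate just obtained; this forces $h$ to be close to a constant modulo a correction of order $E\,\delta(\rho)$. Topological triviality of the bundle on the neck follows from the small-energy hypothesis together with Theorem A of \cite{PR14}, so no cohomological obstruction to gluing arises, and Uhlenbeck's classical patching (partition of unity subordinate to the overlapping cover, combined with the smallness of the transitions) yields the desired global gauge. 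The delicate point will be ensuring that the roughly $\log_2(R/r)$ successive patchings do not accumulate multiplicative constants degrading the final pointwise estimate, which should follow from the fact that $E\,\delta(\rho)$ is summable over the dyadic scales at each fixed point $x$.
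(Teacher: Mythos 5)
Your approach is genuinely different from the paper's, and it runs into a real difficulty that the paper's proof is specifically designed to circumvent. Let me explain.

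The paper does not build the gauge dyadically at all. Instead, it appeals to lemma~\ref{globalg} to produce a single $\sigma\in\mathrm{W}^{1,(4,\infty)}(\mathrm{B}_{2R},G)$ with $A^{\sigma}\in\mathrm{W}^{1,2}(\mathrm{B}_{2R})$ and, crucially, with $\|A^{\sigma}\|_{\mathrm{W}^{1,2}(\mathrm{B}_{r/2}\setminus\mathrm{B}_{r/4})}\leq C\varepsilon$ controlled by the curvature on a dyadic annulus near the inner boundary. It then uses the cutoff lemma~\ref{cutoffcourbure} to form $\chi A^{\sigma}$, which agrees with $A^{\sigma}$ on $\mathrm{B}_{2R}\setminus\mathrm{B}_{r/2}$ but is defined on the entire ball $\mathrm{B}_{2R}$ with small total curvature. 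This allows one to apply Uhlenbeck's Coulomb gauge theorem~\ref{uhlenbeckgauge} \emph{once}, on the full ball $\mathrm{B}_{2R}$, producing a single Coulomb gauge globally. The pointwise bound on $|A^g|$ then comes from $\varepsilon$-regularity applied locally to balls $\mathrm{B}_{\rho/2}(x)$, combined with theorem~\ref{sharpestimateneck} to estimate $\|F_A\|_{\mathrm{L}^2(\mathrm{B}_{2\rho}\setminus\mathrm{B}_{\rho})}$. No patching of dyadic gauges ever occurs.

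The gap in your proposal is precisely the multi-scale patching step you flag as ``the delicate point.'' You have $\sim\log_2(R/r)$ dyadic annuli, and Uhlenbeck's patching lemma (lemma~\ref{Ulem} in the appendix, here stated for $\mathrm{W}^{2,2}$) has constants that depend on the cardinality $|I|$ of the cover. Applied naively to a family of $\sim\log_2(R/r)$ overlapping annular charts, those constants are not uniform as $r/R\to 0$, which is exactly the regime of interest. Your remark that ``$E\delta(\rho)$ is summable over the dyadic scales'' is the right heuristic, but it does not by itself yield a patching argument: one would need to set up an inductive propagation from one end of the neck to the other, keeping track of how the cocycle corrections accumulate and showing that they do so additively at a summable rate, while simultaneously preserving the pointwise bound through each use of a cutoff on an overlap. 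None of this is supplied, and the bare invocation of Uhlenbeck's classical patching is not enough. (Topological triviality of the bundle on the neck, which you attribute to theorem~A of \cite{PR14}, is in fact automatic since $\pi_2(G)=0$ for a compact Lie group, so there is no cohomological obstruction; the issue is entirely quantitative, not topological.) In short, the first paragraph of your proof (the bound on $|F_A|$ and the local estimates $|A^{g_\rho}|^2\leq CE\,\omega_{R,r}$ on each dyadic annulus) is correct and matches the spirit of the paper's use of $\varepsilon$-regularity, but the global gauge construction is left as a sketch precisely where the real work lies, whereas the paper's route via lemma~\ref{globalg} and the cutoff trick sidesteps the difficulty entirely.
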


\begin{proof} According to lemma \ref{globalg}, there exists $\sigma \in \mathrm{W}^{1,(4,\infty)}(\mathrm{B}_{2R},G)$ such that $A^{\sigma}$ is $\mathrm{W}^{1,2}$ on $\mathrm{B}_{2R}$. Moreover, if $\varepsilon$ is small enough, then we can assume $\|A^{\sigma}
	\|_{\mathrm{W}^{1,2}(\mathrm{B}_{r/2}\backslash \mathrm{B}_{r/4})}\leqslant C\varepsilon$. We can use a cutoff function $\chi$ as in lemma \ref{cutoffcourbure} changing $r$ by $r/2$, we get
	\begin{align*}
	\| F_{\chi A^{\sigma}}
		\|_{\mathrm{L}^2(\mathrm{B}_{2R})}&\leqslant \| F_{ A^{\sigma}}
		\|_{\mathrm{L}^2(\mathrm{B}_{2R}\backslash \mathrm{B}_{ r/4})} +C\|A^{\sigma}
		\|_{\mathrm{W}^{1,2}(\mathrm{B}_{r/2}\backslash \mathrm{B}_{r/4})} \\
		& \leqslant C \varepsilon
	\end{align*} Having $\varepsilon$ small enough, we can therefore assume that $\| F_{\chi A^{\sigma}}
	\|_{\mathrm{L}^2(\mathrm{B}_{2R})}$ is less than the $\varepsilon_G$ of theorem \ref{uhlenbeckgauge}. We deduce the existence of $\tilde{g}\in  \mathrm{W}^{1,(4,\infty)}(\mathrm{B}_{2R},G)$ which is a Coulomb gauge for $A^{\sigma}$ on $\mathrm{B}_{2R}\setminus \mathrm{B}_{r/2}$, meaning that $g:=\sigma \tilde{g}$ is a Coulomb gauge for $A$. Since for all $x\in \mathrm{B}_{R}\backslash \mathrm{B}_{r}$, $\mathrm{B}_{\rho/2}(x) \subset \mathrm{B}_{2\rho} \backslash \mathrm{B}_{\rho} \subset  \mathrm{B}_{2R}\backslash \mathrm{B}_{r/2}$ where $\rho = 2|x|/3$, the $\varepsilon$-regularity estimates from theorem \ref{epsreg} give
\[ |x| |A^g(x)| \leqslant C \frac{\rho}{2}  \|A^g  \|_{\mathrm{L}^\infty (\mathrm{B}_{\rho/4}(x))} \leqslant C  \| F_A \|_{\mathrm{L}^2 (\mathrm{B}_{\rho/2}(x))}\leqslant C \| F_{A} \|_{\mathrm{L}^2 (\mathrm{B}_{2\rho}\backslash\mathrm{B}_\rho)}.\] Using the energy estimate of theorem \ref{sharpestimateneck}, we get \[\| F_{A} \|_{\mathrm{L}^2 (\mathrm{B}_{2\rho}\backslash\mathrm{B}_\rho)} \leqslant C E |\mathrm{B}_{2\rho}\backslash\mathrm{B}_\rho|^{1/2}  \sup_{y\in\mathrm{B}_{2\rho}\backslash\mathrm{B}_\rho} \omega_{R,r}(y) \leqslant C E |x|^2\omega_{R,r}(x).  \]
Finally, since $\omega_{R,r}(x)\leqslant 2/|x|^2$, \[ |A^g(x)|^2 \leqslant \frac{C}{|x|^2} \| F_{A} \|_{\mathrm{L}^2 (\mathrm{B}_{2\rho}\backslash\mathrm{B}_\rho)}^2\leqslant C E |x|^2\omega_{R,r}(x) \times \omega_{R,r}(x) \leqslant C E \omega_{R,r}(x) .\]
\end{proof}
  
\noindent Furthermore, we have the following inequalities: \begin{proposition} \label{GaffneyPoincareNeck}
    There exists $C>0$ such that for all $R>r>0$, for all $\mathrm{W}^{1,2}_0(\mathrm{B}_{R}\backslash \mathrm{B}_{r},\mathfrak{g})$-valued $1$-form $a$,  \begin{equation}
     \int_{\mathrm{B}_{R}\backslash \mathrm{B}_{r}} |a|^2 \omega_{R,r } \diff x  \leqslant C \int_{\mathrm{B}_{R}\backslash \mathrm{B}_{r}} |\diff a|^2 + |\diff^* a|^2 \diff x. 
    \end{equation}
\end{proposition}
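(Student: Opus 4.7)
The plan is to combine the flat-space Bochner (Gaffney) identity with Hardy's inequality in $\mathbb{R}^4$. First, I observe that the weight factors in a useful way:
\[
\omega_{R,r}(x) = \frac{1}{R^2} + \frac{r^2}{|x|^4},
\]
and on the support of $a$, i.e. for $r \leq |x| \leq R$, the bound $\omega_{R,r}(x) \leq 2/|x|^2$ holds trivially. So the task reduces to proving a single Hardy-type estimate
\[
\int_{\mathrm{B}_R \setminus \mathrm{B}_r} \frac{|a|^2}{|x|^2}\, dx \leq C \int_{\mathrm{B}_R \setminus \mathrm{B}_r} |\diff a|^2 + |\diff^* a|^2\, dx.
\]

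Next I would extend $a$ by zero to all of $\mathbb{R}^4$; since $a \in \mathrm{W}^{1,2}_0(\mathrm{B}_R\setminus \mathrm{B}_r,\mathfrak{g})$, the extension lies in $\mathrm{W}^{1,2}(\mathbb{R}^4)$ with support bounded away from the origin. Working componentwise in the $\mathfrak{g}$ factor, straightforward integration by parts on $\mathbb{R}^4$ (this is just the flat Bochner–Weitzenböck identity, the curvature term being zero) gives the pointwise-equality-after-integration
\[
\int_{\mathbb{R}^4} |\nabla a|^2\, dx = \int_{\mathbb{R}^4} |\diff a|^2 + |\diff^* a|^2\, dx,
\]
with no boundary contribution thanks to the compact support.

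Then I apply Hardy's inequality in $\mathbb{R}^4$, namely
\[
\int_{\mathbb{R}^4} \frac{|u|^2}{|x|^2}\, dx \leq \int_{\mathbb{R}^4} |\nabla u|^2\, dx,
\]
whose sharp constant $\bigl(\tfrac{n-2}{2}\bigr)^{-2}$ equals $1$ in dimension $4$. Applying this to the scalar function $|a|$ and invoking Kato's inequality $|\nabla |a|| \leq |\nabla a|$, or equivalently applying it componentwise to each real coordinate of $a$, I get $\int |a|^2/|x|^2\, dx \leq \int |\nabla a|^2\, dx$. Chaining the three steps yields the claim with $C = 2$.

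I do not expect any serious obstacle: the argument is essentially a weighted Poincaré inequality, where the $1/R^2$ part encodes a Friedrichs bound (absorbed into $1/|x|^2$ since $|x|\leq R$) and the $r^2/|x|^4$ part encodes a Hardy bound at the inner boundary (absorbed into $1/|x|^2$ since $|x|\geq r$). The only delicate point is making sure the Bochner identity is applied to the $\mathfrak{g}$-valued form with the correct inner product, which is immediate by diagonalizing over an orthonormal basis of $\mathfrak{g}$.
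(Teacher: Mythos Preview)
Your argument is correct. The paper's proof is a one-liner combining Proposition~\ref{poincareneck} (the two separate weighted Poincar\'e bounds $\int |a|^2 \leq CR^2\|\nabla a\|^2$ and $\int |a|^2/|x|^4 \leq Cr^{-2}\|\nabla a\|^2$, matching the two summands of $\omega_{R,r}$) with the Gaffney inequality of Lemma~\ref{gaffney}. Your route instead first collapses the weight via $\omega_{R,r}\leq 2/|x|^2$ on the annulus and then appeals to a single Hardy inequality plus the flat Bochner identity (which is Gaffney with equality). Both arguments are the same mechanism---a weighted Poincar\'e/Hardy step followed by passing from $\nabla a$ to $(\diff a,\diff^*a)$---and your shortcut even yields an explicit constant. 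The paper's choice to keep the two weight terms separate is not needed for the inequality itself, but it foreshadows Section~3.2, where the authors explain that the cruder weight $1/|x|^2$ fails to give a compact embedding, so that the finer structure of $\omega_{R,r}$ is essential later in the index argument.
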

\begin{proof}
    It suffices to combine the inequalities from proposition \ref{poincareneck} and  lemma \ref{gaffney}.
\end{proof}

\begin{theorem} \label{pointcrucial1}
    There exist $c_0,\varepsilon>0$, such that, for all $R>r>0$ and for all connection $A\in \mathfrak U_G(\mathrm{B}_{4R})$ satisfying the Yang-Mills equation on $\mathrm{B}_{2R}\backslash \mathrm{B}_{r/2}$ if $E = \| F_{A}
\|_{\mathrm{L}^2(\mathrm{B}_{4R}\backslash \mathrm{B}_{r/8})}\leqslant \varepsilon$ then for all $\mathrm{W}^{1,2}_0(\mathrm{B}_{R}\backslash \mathrm{B}_{r},\mathfrak{g})$-valued $1$- form $a$, \begin{equation}
   \mathcal{Q}_A(a):= \int_{\mathrm{B}_{R}\backslash \mathrm{B}_{r}} \left( |\diff_A a|^2  + |\diff_A^* a|^2 + \langle F_A, [a, a]\rangle \right) \diff x \geqslant c_0  \int_{\mathrm{B}_{R}\backslash \mathrm{B}_{r}} |a|^2 \omega_{R,r } \diff x.
\end{equation} 
\end{theorem}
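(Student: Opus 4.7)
The plan is to combine three ingredients already in place: (i) the pointwise estimate on the connection and the curvature from Corollary \ref{neckC0estimates}, (ii) the Poincaré--Gaffney type inequality of Proposition \ref{GaffneyPoincareNeck}, and (iii) Young's inequality to split the covariant terms into their flat part plus controllable error.

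First, I would use the gauge invariance of $\mathcal{Q}_A$: replacing $a$ by $g^{-1}ag$ with $g$ the Coulomb gauge of Corollary \ref{neckC0estimates}, we may assume that on $\mathrm{B}_R\setminus \mathrm{B}_r$ the connection form $A$ satisfies
\[ |A(x)|^2 + |F_A(x)| \leqslant C E\,\omega_{R,r}(x). \]
The boundary condition $a \in \mathrm{W}^{1,2}_0(\mathrm{B}_R\setminus\mathrm{B}_r)$ is preserved because $G$ is compact (so $g$ and $g^{-1}$ are uniformly bounded) and $dg\in \mathrm{L}^{4,\infty}$ pairs with $a\in \mathrm{L}^{4,2}$ to give $(dg)a\in\mathrm{L}^2$.

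Second, I expand $\diff_A a = da +[A,a]$ and its formal adjoint $\diff_A^*a = d^*a \pm \star[A,\star a]$. Young's inequality with a small parameter $\delta>0$ gives
\[ |\diff_A a|^2 \geqslant (1-\delta)|da|^2 - C_\delta |A|^2|a|^2,\qquad |\diff_A^* a|^2 \geqslant (1-\delta)|d^*a|^2 - C_\delta |A|^2|a|^2, \]
while the algebraic term is controlled by $|\langle F_A,[a,a]\rangle|\leqslant C|F_A||a|^2$. Combining with the pointwise bound above,
\[ \mathcal{Q}_A(a) \geqslant (1-\delta)\int_{\mathrm{B}_R\setminus\mathrm{B}_r}\!\left(|da|^2+|d^*a|^2\right)dx \;-\; C'_\delta\, E\int_{\mathrm{B}_R\setminus\mathrm{B}_r} \omega_{R,r}\,|a|^2\, dx. \]

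Third, I apply Proposition \ref{GaffneyPoincareNeck} to the first integral, obtaining
\[ \mathcal{Q}_A(a) \geqslant \left(\frac{1-\delta}{C_{\mathrm{GP}}} - C'_\delta\,E\right) \int_{\mathrm{B}_R\setminus\mathrm{B}_r}\omega_{R,r}|a|^2\,dx. \]
Fixing $\delta=\tfrac{1}{2}$ and then choosing $\varepsilon$ small enough that $C'_\delta\,E < \tfrac{1}{4 C_{\mathrm{GP}}}$ yields the desired bound with $c_0 = 1/(4C_{\mathrm{GP}})$.

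The main obstacle, and the reason refined estimates were needed in the previous subsection, is the appearance of the weight $\omega_{R,r}$. The naive $\varepsilon$-regularity estimate $|A|^2\lesssim E/|x|^2$ would only control the cross terms against $|a|^2/|x|^2$, which is \emph{not} absorbed by the Poincaré--Gaffney inequality on long necks (where $\sqrt{rR}$ is much larger than $r$ and much smaller than $R$). The sharp decay $|A|^2 + |F_A| \lesssim E\,\omega_{R,r}$ from Corollary \ref{neckC0estimates} is exactly what is needed to close the estimate at the level of the weighted $L^2$ norm appearing in Proposition \ref{GaffneyPoincareNeck}.
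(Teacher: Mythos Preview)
Your proposal is correct and follows essentially the same route as the paper: pass to the Coulomb gauge of Corollary~\ref{neckC0estimates}, split $\diff_A$ and $\diff_A^*$ into their flat parts plus bracket terms controlled by $|A^g|^2|a|^2$, bound the curvature term by $|F_A||a|^2$, and absorb all error terms into the weighted norm via Proposition~\ref{GaffneyPoincareNeck}. The paper uses the crude inequality $|\diff a^g|^2\leqslant 2|\diff_{A^g}a^g|^2+2|[A^g,a^g]|^2$ (your Young inequality with $\delta=\tfrac12$) and keeps the notation $a^g$ rather than relabelling, but the argument is otherwise identical; your explicit remark that $a^g\in \mathrm{W}^{1,2}_0$ is preserved is a point the paper leaves implicit.
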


\begin{proof} Using the fact that, for any gauge $g$, $\diff_{A^g} a^g=\diff a^g +[A^g,a^g]$ and $\diff_{A^g}^* a^g=\diff^* a^g -\star[A^g,\star a^g]$, where $a^g:=g^{-1} a g$, we get :\[\int_{\mathrm{B}_{R}\backslash \mathrm{B}_{r}}  |\diff a^g|^2 \diff x \leqslant \int_{\mathrm{B}_{R}\backslash \mathrm{B}_{r}}  2\left(|\diff_{A^g} a^g|^2 + |[A^g,a^g]|^2 \right) \diff x\] and according to \eqref{gaugeinvariancediff}, \[\int_{\mathrm{B}_{R}\backslash \mathrm{B}_{r}}  |\diff a^g|^2 \diff x\leqslant \int_{\mathrm{B}_{R}\backslash \mathrm{B}_{r}}  2\left(|\diff_{A} a|^2 + |[A^g,a^g]|^2 \right) \diff x \] and similarly \[\int_{\mathrm{B}_{R}\backslash \mathrm{B}_{r}}  |\diff^* a^g|^2 \diff x \leqslant \int_{\mathrm{B}_{R}\backslash \mathrm{B}_{r}}  2\left(|\diff_{A}^* a|^2+ \left|\star[A^g,\star a^g]\right|^2 \right) \diff x. \] We apply this with the gauge from corollary \ref{neckC0estimates} and combine it with proposition \ref{GaffneyPoincareNeck}: \begin{align*}
    \mathcal{Q}_A(a) &\geqslant \int_{\mathrm{B}_{R}\backslash \mathrm{B}_{r}} \left( \frac{1}{2}\left(|\diff a^g|^2+ |\diff^* a^g|^2\right) -C\left(|A^g|^2 +| F_{A} |\right)|a^g|^2\right) \diff x \\
    &\geqslant \left( \frac{1}{2C} - CE\right) \int_{\mathrm{B}_{R}\backslash \mathrm{B}_{r}} |a^g|^2 \omega_{R,r} \diff x = \left( \frac{1}{2C} - CE\right) \int_{\mathrm{B}_{R}\backslash \mathrm{B}_{r}} |a|^2 \omega_{R,r} \diff x.
\end{align*} We complete the proof by taking $\displaystyle \varepsilon \leqslant \frac{1}{4C^2}$ and $\displaystyle c_0 = \frac{1}{4C}$.
\end{proof}

\subsection{The need for sharp neck estimates}
{~}
The goal of this section is to show that, even in the more favorable setting of Yang-Mills connections, the $\varepsilon$-regularity estimates have to be refined. We will use the following weighted Poincaré inequality:

\begin{proposition} There exists $C>0$ such that for all $R>r>0$ and for all $\mathrm{W}_0^{1,2}(\mathrm{B}_R\backslash\mathrm{B}_r,\mathfrak{g})$-valued 1-form $a$:\begin{equation}
    \int_{\mathrm{B}_R \backslash \mathrm{B}_r} \frac{| a |^2}{| x |^2} \diff x \leqslant C \| \nabla
   a \|_{\mathrm{L}^2 \left( \mathrm{B}_R \backslash \mathrm{B}_r \right)}^2. \label{weightedPoincare}
\end{equation}
\end{proposition}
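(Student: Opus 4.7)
The plan is to observe that the stated estimate is exactly the standard Hardy inequality in dimension $4$ applied componentwise, and that the compact support hypothesis $a\in \mathrm{W}^{1,2}_0(\mathrm{B}_R\setminus\mathrm{B}_r,\mathfrak{g})$ is what makes the whole argument go through with a constant that is \emph{independent} of $R$ and $r$ (this independence is what matters for the later application in the neck region).

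First I would reduce to the scalar, Euclidean setting: since $a$ is $\mathfrak{g}$-valued, one can work component by component in a fixed basis of $\mathfrak{g}$, and since $|a|^2$ and $|\nabla a|^2$ involve only the Euclidean norms of these components, the estimate reduces to proving
\[
    \int_{\mathrm{B}_R\setminus \mathrm{B}_r} \frac{u^2}{|x|^2}\,\diff x \leq C \int_{\mathrm{B}_R\setminus \mathrm{B}_r} |\nabla u|^2 \,\diff x
\]
for $u \in \mathrm{W}^{1,2}_0(\mathrm{B}_R\setminus\mathrm{B}_r,\R)$. The boundary condition lets me extend $u$ by zero to all of $\R^4$, so it is enough to prove the inequality on $\R^4$ for $u \in \mathrm{W}^{1,2}(\R^4)$ with $\mathrm{supp}(u) \subset \R^4\setminus\{0\}$.

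Next I would run the classical divergence-based argument. Using the identity $\mathrm{div}\bigl(x/|x|^2\bigr) = 2/|x|^2$ in $\R^4 \setminus \{0\}$ and integrating by parts (the boundary contribution at infinity vanishes by compact support, and the one at the origin vanishes because $u$ vanishes near $0$), I get
\[
    2\int_{\R^4} \frac{u^2}{|x|^2}\,\diff x = \int_{\R^4} u^2 \,\mathrm{div}\!\left(\frac{x}{|x|^2}\right) \diff x = -2\int_{\R^4} \frac{u}{|x|^2}\, x\cdot \nabla u \,\diff x.
\]
Bounding $|x\cdot \nabla u|\leq |x|\,|\nabla u|$ and applying Cauchy--Schwarz gives
\[
    \int_{\R^4} \frac{u^2}{|x|^2}\,\diff x \leq \left(\int_{\R^4} \frac{u^2}{|x|^2}\,\diff x\right)^{1/2} \left(\int_{\R^4} |\nabla u|^2 \,\diff x\right)^{1/2},
\]
which yields the Hardy inequality with $C=1$ after dividing out.

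The only subtlety is justifying the integration by parts for functions in $\mathrm{W}^{1,2}_0$ rather than smooth compactly supported functions; this is handled by the density of $\mathcal{C}^\infty_c(\mathrm{B}_R\setminus\mathrm{B}_r)$ in $\mathrm{W}^{1,2}_0(\mathrm{B}_R\setminus\mathrm{B}_r)$ and a limiting argument using the inequality itself to control the weighted $L^2$-norm of the approximating sequence. I expect this density/approximation step to be the only genuinely technical point; everything else is the classical computation whose constant $C$ is manifestly dimensionless and hence uniform in $R$ and $r$.
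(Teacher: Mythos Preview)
Your proof is correct and self-contained: the componentwise reduction to scalar functions is legitimate since $|a|^2$ and $|\nabla a|^2$ decompose as sums over the coordinate and Lie-algebra indices, and the integration-by-parts computation of the Hardy inequality in $\R^4$ is accurate (including the divergence $\mathrm{div}(x/|x|^2)=2/|x|^2$). The density argument you flag is indeed routine.

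The paper takes a different route. Rather than the classical Hardy computation, it uses the Lorentz-space machinery already in play throughout: it writes $\|a/|x|\|_{\mathrm{L}^2}\leqslant \|1/|x|\|_{\mathrm{L}^{4,\infty}}\|a\|_{\mathrm{L}^{4,2}}$ via the H\"older-type product estimate $\mathrm{L}^{4,\infty}\cdot\mathrm{L}^{4,2}\hookrightarrow\mathrm{L}^2$, then invokes the refined Sobolev embedding $\dot{\mathrm{W}}^{1,2}_0\hookrightarrow\mathrm{L}^{4,2}$, and finally checks that $\|1/|x|\|_{\mathrm{L}^{4,\infty}(\R^4)}=|\mathrm{B}_1|^{1/4}<\infty$. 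Your approach is more elementary, needs no Lorentz spaces, and even delivers the sharp constant $C=1$; the paper's approach has the virtue of being a one-line application of the $\mathrm{L}^{4,2}$--$\mathrm{L}^{4,\infty}$ duality that is used repeatedly elsewhere in the article, so it is stylistically consistent with the surrounding analysis.
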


\begin{proof}
See proposition \ref{hardyneck}.
\end{proof}

As a consequence (with the same proof as theorem \ref{pointcrucial1}), for all $\mathrm{W}_0^{1,2}(\mathrm{B}_R\backslash\mathrm{B}_r,\mathfrak{g})$-valued 1-form $a$, \[\mathcal{Q}_A(a) \geqslant c \int_{\mathrm{B}_R \backslash \mathrm{B}_r} \frac{| a |^2}{| x |^2} \diff x,\] which is a first result of positive contribution of the necks to the second derivative. Wanting to diagonalize this quadratic form, we would need compactness in the embedding coming from \eqref{weightedPoincare} (see lemmas \ref{formequaddiagonalisée} and \ref{diagonalisationdelaformequad}). However the embedding $\mathrm{W}^{1,2}_0(\mathrm{B}_1) \hookrightarrow \mathrm{L}^2(\mathrm{B}_1, \diff x/|x|^2 )$ is not compact : fix a non trivial $a \in \mathrm{W}^{1, 2}(\mathrm{B}_1,T^*\mathrm{B}_1\otimes \mathfrak{g})$ with compact support and introduce, for $\varepsilon \in ]0;1]$, $a_{\varepsilon} = \varphi_{\varepsilon}^{\ast} a$ where $\varphi_{\varepsilon} (x) = x / \varepsilon$. Since $\varphi_\varepsilon$ is conformal in $\mathrm{B}_1$, we  \begin{equation*}
    \int_{\mathrm{B}_\varepsilon} \left(|\diff a_\varepsilon|^2 + |\diff^* a_\varepsilon|^2 \right) \diff x  = \int_{\mathrm{B}_1} \left(|\diff a|^2 + |\diff^* a|^2 \right) \diff x, 
\end{equation*}  \begin{equation*}
    \int_{\mathrm{B}_\varepsilon}| a_{\varepsilon} |^2 \diff x = \int_{\mathrm{B}_\varepsilon} \frac{1}{\varepsilon^2} |
   a |^2 \circ \varphi_{\varepsilon} \diff x = \varepsilon^2   \int_{ \mathrm{B}_1}| a |^2 \diff x 
\end{equation*} and \begin{equation*}
   \int_{\mathrm{B}_\varepsilon} \frac{| a_{\varepsilon} |^2}{| x |^2} \diff x = \int_{\mathrm{B}_\varepsilon} \frac{|
   a |^2 \circ \varphi_{\varepsilon}}{| x / \varepsilon |^2} 
   \frac{\diff x}{\varepsilon^4} = \int_{\mathrm{B}_1} \frac{| a |^2}{| x |^2} \diff x.
\end{equation*} 
In particular, $(a_{\varepsilon})$ is bounded in $\mathrm{W}^{1, 2}$.  Furthermore \begin{align}
    \int_{\mathrm{B}_\varepsilon} \frac{|a_\varepsilon|^2}{|x|^2}\diff x =  \int_{\mathrm{B}_1} \frac{|a|^2}{|x|^2}\diff x .\label{contradictioncompacite}
\end{align} Since $a_{\varepsilon} \rightarrow 0$ \textit{a.e.}, if a sequence $(\varepsilon_n)$ converging to zero was such that
$a_{\varepsilon_n} \rightarrow a_0$ in $\mathrm{L}^2(\mathrm{B}_1, \diff x/|x|^2 )$ then Fatou lemma would imply
\[ \int_{\mathrm{B}_1} \frac{| a_0 |^2}{|x|^2} \diff x=  \int_{\mathrm{B}_1}  \liminf_{n \rightarrow + \infty} |
   a_0 - a_{\varepsilon_n} |^2 \frac{\diff x}{|x|^2} \leqslant \liminf_{n
   \rightarrow + \infty} \int_{\mathrm{B}_1}  | a_0 - a_{\varepsilon_n} |^2 \frac{\diff x}{|x|^2} =
   0 \]
\textit{i.e.} $a_0 = 0$ \textit{a.e.} This is a contradiction with \eqref{contradictioncompacite}. Thus, $(a_{\varepsilon})$ has no converging subsequence in
$\mathrm{L}^2(\mathrm{B}_1, \diff x/|x|^2 )$.

\section{Morse index semi-continuity}
\

This section is dedicated to the study of the Morse index stability for Yang-Mills connections. To simplify notations and clarify the exposition, we consider the case where there is only one bubble. It is easy to deduce the general case by splitting the manifold into neck regions, bubbles and thick part and to treat each neck region independently as in the case of single bubble.  Hence we will only be considering the following special case of theorem \ref{bubbletree}: a sequence of Yang-Mills connections $(A_k)$ on a closed 4-manifold $M$ which converges into $\mathfrak C_{G,\mathrm{loc}}^\infty(M\setminus \{p\})$ to $A_\infty \in \mathfrak{U}_G(M)$ a Yang-Mills connection and there exists $p_k \rightarrow p $ and $\lambda_k\rightarrow 0$ such that $\phi_k^* A_k$ converges into $\mathfrak C_{G,\mathrm{loc}}^\infty(\R^4)$ to $\widehat{A}_\infty \in \mathfrak{U}_G(M)$  where $\phi_k(x)= p_k+\lambda_k x$ in local coordinates and $\widehat{A}_\infty=\pi_* \tilde A_\infty$ where $\tilde A_\infty \in \mathfrak{U}_G(S^4)$ is a Yang-Mills connections and $\pi$ is the stereographic projection.\\

Recall that for all $k\in \N$ and $a\in \mathrm{W}^{1,2}(M,T^*M\otimes \mathfrak{g})$, \begin{equation}
     Q_{A_k} (a):= \mathrm{D}^2 \mathcal{YM}_{A_k} (a,a) =\int_{M} | \diff_{A_k} a |_h^2 + \langle F_{A_k}, [a, a] \rangle_h \mathrm{vol}_h.
\end{equation} 

The Morse index of $\mathcal{YM}$ at $A_k$ is defined by: \[ \mathrm{ind}_\mathcal{YM}(A_k):=  \ind Q_{A_k} =  \sup \{ \dim W \mid Q_{\left. A_k\right| W}  < 0 \}. \]  

A first stability result is the lower semi-continuity one:
\begin{theorem} \label{lowerstability}
  For $k$ large enough, \[\mathrm{ind}_\mathcal{YM}(A_k) \geqslant \mathrm{ind}_\mathcal{YM}(A_{\infty})+ \mathrm{ind}_\mathcal{YM}(\widehat{A}_{\infty}).\]
\end{theorem}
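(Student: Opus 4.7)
Set $I:=\mathrm{ind}_{\mathcal{YM}}(A_\infty)$ and $\hat I:=\mathrm{ind}_{\mathcal{YM}}(\widehat A_\infty)$. The plan is the classical one for lower semi-continuity of Morse indices under bubbling: pick optimal negative subspaces $W_\infty$ for $Q_{A_\infty}$ and $\widehat W_\infty$ for $Q_{\widehat A_\infty}$, transplant their bases onto $A_k$ via a local gauge on the thick part and via the bubble rescaling $\phi_k$ on the neck of the bubble, and check that the images span a subspace of dimension $I+\hat I$ on which $Q_{A_k}$ remains negative definite. Arranging disjoint supports will make $Q_{A_k}$ block-diagonal, so the negativity of each block follows from smooth convergence of $A_k$ on the corresponding region.

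\textbf{Step 1: truncation of optimal negative subspaces.} By Proposition \ref{finitudeindice} and elliptic regularity for $\mathcal L_{A_\infty}$, one may choose a basis $(a_1,\dots,a_I)$ of $W_\infty$ consisting of smooth $\mathfrak g$-valued $1$-forms. Multiplying by a cut-off $\chi_\eta$ equal to $0$ on $B_\eta(p)$ and to $1$ outside $B_{2\eta}(p)$ yields $\chi_\eta a_i\to a_i$ in $\mathrm W^{1,2}$ as $\eta\to 0$; because negative-definiteness of a continuous quadratic form is open on the Grassmannian of $I$-planes, for $\eta$ small the family $(\chi_\eta a_i)$ still spans an $I$-plane on which $Q_{A_\infty}$ is negative definite. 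After this truncation one assumes $\mathrm{supp}(a_i)\subset M\setminus B_{2\eta}(p)$. The same argument applied on $\R^4$ (truncating outside $B_R(0)$ and keeping $(\hat a_j)$ smooth with compact support) produces $(\hat a_1,\dots,\hat a_{\hat I})$ with $\mathrm{supp}(\hat a_j)\subset B_R(0)$ spanning an optimal negative subspace for $Q_{\widehat A_\infty}$.

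\textbf{Step 2: transplanting to $A_k$.} Pick $k$ large so that $p_k\in B_\eta(p)$ and $\lambda_k R<\eta$; then $B_{\lambda_k R}(p_k)\subset B_{2\eta}(p)$ is disjoint from $M\setminus B_{2\eta}(p)$. On the thick part, the $\mathfrak C^\infty_{G,\mathrm{loc}}$-convergence $A_k\to A_\infty$ yields local gauges $g_k$ with $A_k^{g_k}\to A_\infty$ smoothly on $\mathrm{supp}(a_i)$; set $\tilde a_{i,k}:=g_k\,a_i\,g_k^{-1}$, extended by zero. By the gauge invariance recalled in the remark after Definition \ref{defQ}, the bilinear form associated to $Q_{A_k}$ satisfies
\[
B_{A_k}(\tilde a_{i,k},\tilde a_{j,k})\;=\;B_{A_k^{g_k}}(a_i,a_j)\;\xrightarrow[k\to\infty]{}\;B_{A_\infty}(a_i,a_j).
\]
For the bubble piece, define $\check a_{j,k}:=(\phi_k^{-1})^*\hat a_j$, supported in $B_{\lambda_k R}(p_k)$. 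Using conformal invariance of $\mathcal{YM}$ in dimension $4$, hence of its second variation, together with the fact that $\lambda_k^{-2}\phi_k^*h\to g_{\mathrm{Eucl}}$ smoothly and $\phi_k^*A_k\to\widehat A_\infty$ in $C^\infty_{\mathrm{loc}}(\R^4)$, one obtains
\[
B_{A_k}(\check a_{i,k},\check a_{j,k})\;=\;B_{\phi_k^*A_k}(\hat a_i,\hat a_j)\;\xrightarrow[k\to\infty]{}\;B_{\widehat A_\infty}(\hat a_i,\hat a_j).
\]

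\textbf{Step 3: direct sum and block decomposition.} Let $W_k:=\mathrm{span}(\tilde a_{1,k},\dots,\tilde a_{I,k},\check a_{1,k},\dots,\check a_{\hat I,k})$. Since $\mathrm{supp}\,\tilde a_{i,k}\cap\mathrm{supp}\,\check a_{j,k}=\emptyset$, the locality of $\diff_{A_k}$ and the pointwise nature of $[\,\cdot\,,\,\cdot\,]$ give $B_{A_k}(\tilde a_{i,k},\check a_{j,k})=0$, so the Gram matrix of $Q_{A_k}$ on $W_k$ is block-diagonal. Each block converges, by Step~2, to the Gram matrix of $Q_{A_\infty}$ on $W_\infty$, respectively of $Q_{\widehat A_\infty}$ on $\widehat W_\infty$, both of which are negative definite. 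Negative definiteness is open among symmetric matrices of fixed size, so for $k$ large $Q_{A_k}$ is negative definite on $W_k$; simultaneously $\dim W_k=I+\hat I$ since each block converges to an invertible matrix. This yields $\mathrm{ind}_{\mathcal{YM}}(A_k)\geqslant I+\hat I$.

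\textbf{Main obstacle.} All arithmetic steps are routine once the gauge setup is in place; the only technically delicate point is the conformal-invariance computation in Step~2 for the bubble block, where one must compare $Q_{A_k}$ computed with $h$ on $B_{\lambda_k R}(p_k)$ to $Q_{\phi_k^*A_k}$ computed with the rescaled metric $\lambda_k^{-2}\phi_k^*h$ on $B_R(0)$, and then pass to the Euclidean limit using joint smooth convergence of $\phi_k^*A_k$ and of the rescaled metric on a fixed compact set. The second mild annoyance is the cut-off argument of Step~1 applied to $\widehat W_\infty$, which requires the eigenforms of $\mathcal L_{\widehat A_\infty}$ to decay enough at infinity (to be absorbed by the truncation); this is guaranteed by the asymptotic decay of Yang--Mills bubbles on $S^4$ combined with standard elliptic decay estimates.
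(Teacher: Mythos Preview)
Your overall strategy coincides with the paper's: truncate optimal negative subspaces away from the concentration point, transplant them to $A_k$ by gauge conjugation on the thick part and by the rescaling $\phi_k$ on the bubble, and use disjoint supports together with conformal invariance to get a block-diagonal Gram matrix converging to a negative-definite one.

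There is, however, one genuine gap in Step~2. You write ``local gauges $g_k$ with $A_k^{g_k}\to A_\infty$ smoothly on $\mathrm{supp}(a_i)$'' and then define $\tilde a_{i,k}=g_k\,a_i\,g_k^{-1}$ as if a \emph{single} gauge existed over all of $\mathrm{supp}(a_i)\subset M\setminus B_{2\eta}(p)$. The convergence $A_k\to A_\infty$ in $\mathfrak C^\infty_{G,\mathrm{loc}}$ only provides gauges $g_k^\alpha$ on the sets of a good cover, with $A_k^{g_k^\alpha}\to A_\infty^{g_\infty^\alpha}$ on each $U_\alpha$; the conjugates $g_k^\alpha a_i (g_k^\alpha)^{-1}$ need not match on overlaps, so your $\tilde a_{i,k}$ is not well-defined globally. (Likewise, ``$A_k^{g_k}\to A_\infty$'' is ill-posed since $A_\infty$ is a weak connection, not a global $\mathfrak g$-valued $1$-form.) The paper fixes exactly this via Lemma~\ref{approximationlemma}: one sets $a_\alpha=(g_\infty^\alpha)^{-1}a\,g_\infty^\alpha$ and glues with a partition of unity, $a_k=\sum_\alpha\chi_\alpha\,g_k^\alpha a_\alpha(g_k^\alpha)^{-1}$, then checks $Q_{A_k}(a_k)\to Q_{A_\infty}(a)$ using the convergence of the transition functions $g_k^{\alpha\beta}\to g_\infty^{\alpha\beta}$. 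The same construction is applied on the bubble side (replacing $M$ by $S^4$), so your direct push-forward $(\phi_k^{-1})^*\hat a_j$ has the same issue unless you first pass through gauges.

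One remark on your ``second mild annoyance'': no decay of eigenforms beyond $\mathrm W^{1,2}$ is needed. The paper truncates an arbitrary $\hat a\in\mathrm W^{1,2}(\R^4)$ by $\chi(2\eta|x|)\hat a$ and lets $\eta\to 0$; dominated convergence in $\mathrm L^4$ and in $\mathrm L^2$ for $\diff_{\widehat A_\infty}(\cdot)$ (using $|\diff\chi(2\eta|x|)|\leqslant C/|x|\in\mathrm L^{4,\infty}$ and $\mathrm W^{1,2}\hookrightarrow\mathrm L^{4,2}$) already gives $Q_{\widehat A_\infty}(\hat a_\eta)\to Q_{\widehat A_\infty}(\hat a)$.
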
 

\begin{lemma} Let $a \in \mathrm{W}^{1, 2} (M, T^{\ast} M \otimes
\mathfrak{g})$ and assume there exists $\eta > 0$  such that $\supp a
\subset M\backslash \mathrm{B}_{\eta} (p)$. Then, up to a subsequence, there exists a sequence $(a_k) \subset \mathrm{W}^{1, 2} (M, T^{\ast} M \otimes \mathfrak{g})$, such that satisfying the following properties :
\begin{itemize}
  \item $\supp a_k \subset M\backslash \mathrm{B}_{\eta/2} (p)$,
  
  \item $a_k \overset{\mathrm{L}^{4, 2}}{\rightarrow} a$,
  
  \item $Q_{A_k} (a_k) \rightarrow Q_{A_{\infty}} (a)$.
\end{itemize} \label{approximationlemma}
\end{lemma}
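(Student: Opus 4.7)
The plan is to construct $a_k$ by gauge-transforming $a$ into the trivialization in which $A_k$ is expressed. By the definition of convergence $A_k \to A_\infty$ in $\mathfrak{C}^\infty_{G,\mathrm{loc}}(M\setminus \{p\})$, one can find gauges $g_k$ on an open neighborhood $U$ of $\supp a$, compactly contained in $M\setminus\{p\}$, such that $A_k^{g_k} \to A_\infty$ smoothly on $U$. Setting $a_k := g_k a g_k^{-1}$, extended by zero outside $\supp a$, keeps the support where we want it: $\supp a_k = \supp a \subset M\setminus \mathrm{B}_\eta(p)\subset M\setminus \mathrm{B}_{\eta/2}(p)$.

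The convergence of the second variation is then a direct consequence of the gauge invariance. The pointwise identity \eqref{gaugeinvariancediff} gives $\diff_{A_k}a_k = g_k (\diff_{A_k^{g_k}}a) g_k^{-1}$, and since $F_{A_k}=g_kF_{A_k^{g_k}}g_k^{-1}$ and the Killing form is $\mathrm{Ad}$-invariant, one obtains $|\diff_{A_k} a_k|_h^2 = |\diff_{A_k^{g_k}} a|_h^2$ and $\langle F_{A_k}, [a_k, a_k]\rangle_h = \langle F_{A_k^{g_k}}, [a,a]\rangle_h$ pointwise, hence $Q_{A_k}(a_k) = Q_{A_k^{g_k}}(a)$. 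Because $A_k^{g_k}\to A_\infty$ in $C^\infty$ on the compact set $\supp a$, the two integrands converge uniformly on $\supp a$, so dominated convergence yields $Q_{A_k^{g_k}}(a)\to Q_{A_\infty}(a)$.

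For the $L^{4,2}$ convergence, the smooth boundedness of $(g_k)$ on $\supp a$ together with Arzel\`a--Ascoli lets us pass to a subsequence along which $g_k\to g_\infty$ smoothly. To make the limit of $a_k$ coincide with $a$ rather than with $g_\infty a g_\infty^{-1}$, one absorbs $g_\infty$ into a gauge change of the limit: replacing $g_k$ by $g_k g_\infty^{-1}$ and simultaneously redefining $A_\infty$ as $A_\infty^{g_\infty^{-1}}$ and $a$ as $g_\infty^{-1}a g_\infty$. These substitutions leave $\mathcal{YM}$, the quadratic form $Q_{A_\infty}(a)$ and the Morse index invariant, and ensure $g_k\to\mathrm{id}$ smoothly on $\supp a$. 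With this normalization, $a_k = g_k a g_k^{-1}\to a$ uniformly on its fixed compact support; on a set of finite measure this gives $\|a_k-a\|_{L^{4,2}}\to 0$ via the continuous embedding of $L^\infty(K)$ into $L^{4,2}(K)$.

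The main obstacle is exactly this normalization: without absorbing the limit gauge $g_\infty$, the natural candidate $a_k = g_k a g_k^{-1}$ only converges to a gauge-equivalent copy of $a$. The gauge invariance of every quantity appearing in the statement, including the support condition, makes the absorption harmless and delivers the required strong $L^{4,2}$ convergence.
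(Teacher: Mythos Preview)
Your proposal has the right gauge-invariance idea but contains a genuine gap: you assume the existence of a \emph{single} gauge $g_k$ on an open neighbourhood $U$ of $\supp a$ such that $A_k^{g_k}$ converges smoothly. The convergence $A_k\to A_\infty$ in $\mathfrak C^\infty_{G,\mathrm{loc}}(M\setminus\{p\})$ is, by definition, given through \emph{local} gauges $g_k^\alpha$ on a good cover, together with smoothly converging transition functions. Over the whole compact region $M\setminus \mathrm{B}_{\eta/2}(p)$ there is in general no reason for these local gauges to patch into a global one: the topology of $M\setminus \mathrm{B}_{\eta/2}(p)$ can carry nontrivial $G$-bundles. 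The paper circumvents this obstruction by defining
\[
a_k=\sum_\alpha \chi_\alpha\, g_k^\alpha\, a_\alpha\, (g_k^\alpha)^{-1},\qquad a_\alpha=(g_\infty^\alpha)^{-1}a\, g_\infty^\alpha,
\]
with a partition of unity $(\chi_\alpha)$, and then checks the convergence of $Q_{A_k}(a_k)$ by expanding bilinearly over the $\chi_\alpha$ and invoking the local gauge invariance \eqref{gaugeinvariancediff}. This is precisely the step your global-gauge shortcut is skipping.

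A second issue is the claimed ``smooth boundedness'' of $(g_k)$ and the appeal to Arzel\`a--Ascoli. The global connection forms $A_k$ live only in $\mathrm L^{4,\infty}$, so from $dg_k = g_k A_k^{g_k}-A_k g_k$ one obtains at best $g_k$ bounded in $\mathrm W^{1,(4,\infty)}$, not in any $C^l$. The paper accordingly extracts only $g_k^\alpha\to g_\infty^\alpha$ \emph{a.e.}\ (via theorem \ref{controlledglobalgauge}), and closes the $\mathrm L^{4,2}$ convergence by dominated convergence from the pointwise bound $|a_k|_h\le |a|_h$. Your ``normalisation'' step, phrased as redefining $A_\infty$ and $a$, is also not what is needed: the lemma fixes $A_\infty$ and $a$, so the correct move is to replace $g_k^\alpha$ by $g_k^\alpha(g_\infty^\alpha)^{-1}$ (equivalently, insert the $(g_\infty^\alpha)^{-1}\cdot g_\infty^\alpha$ factors into $a_k$ as the paper does), not to change the data of the statement.
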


\begin{proof}[Proof of lemma \ref{approximationlemma}]
Consider a finite open cover $\{U_{\alpha}\}_{\alpha}$ of $M\backslash \mathrm{B}_{\eta} (p)$ such that $U_\alpha \subset M\backslash \mathrm{B}_{\eta/2} (p)$ and such that there exist $g_k^{\alpha},
g_{\infty}^{\alpha} \in \mathrm{W}^{1, (4, \infty)} (U_{\alpha}, G)$ satisfying
\[ A_k^{g_k^{\alpha}} 
   \overset{\mathcal{C}^\infty}{\rightarrow} A_{\infty}^{g_{\infty}^{\alpha}}, g^{\alpha \beta}_k :=
   (g_k^{\alpha})^{- 1} g_k^{\beta} 
   \overset{\mathcal{C}^\infty}{\rightarrow} g^{\alpha \beta}_{\infty}
  := (g_{\infty}^{\alpha})^{- 1} g_{\infty}^{\beta}. \]
According to theorem \ref{controlledglobalgauge}, we can always assume that $(A_k)$ is bounded in $\mathrm{L}^{4,\infty}(M)$, which imply that, for all $\alpha$, $(g_k^{\alpha})$ is bounded in $\mathrm{W}^{1,(4,\infty)}$ and, up to a subsequence, we can assume $g_k^{\alpha} \to g_{\infty}^{\alpha}$ a.e. on $U_\alpha$. Introduce $(\chi_{\alpha})_{\alpha}$ a partition of unity subordinate to the open cover $(U_{\alpha})_{\alpha}$. Define
\[ a_k = \sum_{\alpha} \chi_{\alpha} g_k^{\alpha} a_{\alpha} 
   (g_k^{\alpha})^{- 1} \]
where $a_{\alpha} = (g_{\infty}^{\alpha})^{- 1} a
g_{\infty}^{\alpha}$. On $U_{\beta}$,
\[ (g_k^{\beta})^{- 1} a_k g_k^{\beta} = \sum_{\alpha} \chi_{\alpha}
   g_k^{\beta \alpha} g_{\infty}^{\alpha \beta} a_{ \beta} g_{\infty}^{\beta
   \alpha} g_k^{\alpha \beta}. \] We deduce that, on $U_{\beta}$, \[(g_k^{\beta})^{- 1} a_k g_k^{\beta} 
\overset{\mathrm{W}^{1, 2}}{\rightarrow} \sum_{\alpha} \chi_{\alpha} 
(g_{\infty}^{\beta})^{- 1} a g_{\infty}^{\beta} = (g_{\infty}^{\beta})^{- 1} a
g_{\infty}^{\beta}.\] Using the gauge invariance \eqref{gaugeinvariancediff}, we obtain
\begin{align*}
    Q_{A_k}(a_k)=&\sum_{\alpha,\beta}\int_M \left(\langle \diff_{A_k} (\chi_\alpha a_k) , \diff_{A_k} (\chi_\beta a_k)\rangle_h + \langle F_{A_k},  [\chi_\alpha a_k,\chi_\beta a_k] \rangle_h \right) \mathrm{vol}_h \\
    =& \sum_{\alpha,\beta}\int_M \langle \diff_{A_k^{g_k^\alpha}} (\chi_\alpha (g_k^{\alpha})^{-1} a_{k} g_k^{\alpha}) , \diff_{A_k^{g_k^\alpha}} (\chi_\beta g_k^{\alpha \beta }(g_k^{\beta})^{-1} a_{k} g_k^{\beta} g_k^{\beta \alpha})\rangle_h  \mathrm{vol}_h\\
    &+\int_M \chi_{\beta}\langle F_{A_k^{g_k^{\alpha}}}, \chi_\alpha[ (g_k^{\alpha})^{-1} a_{k} g_k^{\alpha},(g_k^{\alpha})^{-1} a_{k} g_k^{\alpha}] \rangle_h \mathrm{vol}_h.
\end{align*} Hence 
\begin{align*}
    Q_{A_k}(a_k) \underset{k\to+\infty}{\rightarrow}& \sum_{\alpha,\beta}\int_M \langle \diff_{A_\infty^{g_\infty^\alpha}} (\chi_\alpha (g_\infty^{\alpha})^{-1} a g_\infty^{\alpha}) , \diff_{A_\infty^{g_\infty^\alpha}} (\chi_\beta g_\infty^{\alpha \beta }(g_\infty^{\beta})^{-1} a g_\infty^{\beta} g_\infty^{\beta \alpha})\rangle_h  \mathrm{vol}_h\\
    &+\int_M \chi_{\beta}\langle F_{A_\infty^{g_\infty^\alpha}}, \chi_\alpha[ (g_\infty^{\alpha})^{-1} a g_\infty^{\alpha},(g_\infty^{\alpha})^{-1} a g_\infty^{\alpha}] \rangle_h \mathrm{vol}_h\\
    &=\sum_{\alpha,\beta}\int_M \left(\langle \diff_{A_\infty} (\chi_\alpha a_\infty) , \diff_{A_\infty} (\chi_\beta a_\infty)\rangle_h + \langle F_{A_\infty},  [\chi_\alpha a_\infty,\chi_\beta a_\infty] \rangle_h\right)\mathrm{vol}_h\\
    &= \int_{M} | \diff_{A_\infty} a |_h^2 + \langle F_{A_\infty}, [a, a] \rangle_h \mathrm{vol}_h = Q_{A_\infty}(a).
\end{align*}
Since  $| a_k |_h \leqslant | a |_h$, the dominated convergence theorem implies the $\mathrm{L}^{4,2}$ convergence.
    \end{proof}

\begin{proof}[Proof of theorem \ref{lowerstability}]
  
Let $a \in \mathrm{W}^{1,2}(M, T^*M\otimes \mathfrak{g})$, $\widehat{a} \in \mathrm{W}^{1,2}(\R^4, T^*\R^4\otimes \mathfrak{g})$. Consider $\chi \in
\mathcal{C}^\infty_c ([0, 2[, [0, 1])$ such that $\chi_{| [0 ; 1]} = 1$. Introduce for $\eta>0$,
 \[  a_\eta (x) = \left( 1 - \chi \left( \frac{| x - p |}{\eta} \right)  \right) a (x), \]
and 
   \[\widehat{a}_{\eta}(x) =\chi \left( 2 \eta |x| \right)
   \widehat{a} \left( x \right).\] Apply lemma \ref{approximationlemma} to $a_{\eta}$ and to $\widehat{a}_{\eta}$ (replacing $M$ by $S^4$) to get two sequences $(a_{\eta,k})_k$ and $(\widehat{a}_{\eta,k})_k$, such that 
   \begin{align*}
   Q_{A_k} (a_{\eta,k}) &\underset{k\to+\infty}{\rightarrow} Q_{A_\infty} (a_{\eta}),\\
    Q_{\phi_k^*A_k} ( \widehat{a}_{\eta,k}) &\underset{k\to+\infty}{\rightarrow} Q_{\widehat{A}_\infty} (\widehat{a}_{\eta}),
   \end{align*} and for all $k$ large enough $a_{\eta,k}$ and $(\phi_k)_*\widehat{a}_{\eta,k}$ have disjoint support. Thanks to the conformal invariance, we have  \[ Q_{A_k} (a_{\eta,k}+ (\phi_k)_*\widehat{a}_{\eta,k}) \underset{k\to+\infty}{\rightarrow} Q_{A_\infty} (a_{\eta})+ Q_{\widehat{A}_\infty} (\widehat{a}_{\eta}).\]
   Since \[\diff_{A_\infty} a_{\eta} = -\diff \left(\chi\left(\frac{|\cdot - p|}{\eta}\right)\right) \wedge a + \left(1-\chi\left(\frac{|\cdot - p|}{\eta}\right)\right)\diff_{A_\infty}a \]  and using the fact that $t\mapsto t \chi'(t)$ is bounded, we obtain 
   \[|\diff_{A_\infty} a_{\eta}| \leqslant C\frac{|a|}{|x-p|} \mathrm{1}_{B(p,2"\eta)}+ |\diff_{A_\infty} a|.\] Using $\mathrm{L}^{4,\infty} \cdot \mathrm{L}^{4,2} \hookrightarrow \mathrm{L}^2$ and the embedding $\mathrm{W}^{1,2}\hookrightarrow \mathrm{L}^{4,2}$, we deduce that the right-hand side is in $\mathrm{L}^2$ and by the dominated convergence theorem $\diff_{A_\infty} a_{\eta}\underset{\eta \to 0}{\rightarrow} \diff_{A_\infty} a$ in $\mathrm{L}^2$. It is straightforward that $a_{\eta}\underset{\eta \to 0}{\rightarrow} a$ in $\mathrm{L}^4$ so we deduce \[\lim_{\eta\to 0} Q_{A_\infty}(a_\eta)=Q_{A_\infty}(a). \] A similar argument show that \[\lim_{\eta\to 0} Q_{\widehat{A}_\infty}(\widehat{a}_\eta)=Q_{\widehat{A}_\infty}(\widehat{a})\] and therefore \[\lim_{\eta \to 0} \lim_{k\to+\infty} Q_{A_k} (a_{\eta,k}+ (\phi_k)_*\widehat{a}_{\eta,k}) = Q_{A_\infty} (a)+ Q_{\widehat{A}_\infty} (\widehat{a}).\] Now, if $Q_{{A}_{\infty}} (a) < 0$ and
$Q_{\widehat{A}_{\infty}} (\widehat{a}) < 0$, for $\eta$ small enough and $k$ large enough, \[Q_{A_k}(a_{\eta,k}+(\phi_k)_* \widehat{a}_{\eta,k})<0.\] 

 We have then proved :  \[\liminf_{k \rightarrow + \infty}
\mathrm{ind}_{\mathcal{YM}} (A_k) \geqslant \mathrm{ind}_{\mathcal{YM}}  (A_{\infty}) + \mathrm{ind}_{\mathcal{YM}} 
(\widehat{A}_{\infty}).\]\end{proof}

The upper-semi-continuity result is more subtle. The proof is similar to the one from \cite[theorem I.1]{daliorivieregianocca2022morse}, involving part of the nullity of $Q_{A_k}$ :  \begin{theorem}\label{semicontinuitesupindex}
   For $k$ large enough \[\varsigma(A_k) \leqslant \varsigma(A_{\infty})+\varsigma(\widehat{A}_{\infty}),\] where $\varsigma (A) = \ind_{\mathcal{YM}}(A) + \dim \left(\ker Q_{A} \cap \ker \diff_A^*\right).$
\end{theorem}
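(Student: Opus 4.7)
The plan is to argue by contradiction. Passing to a subsequence, assume $\varsigma(A_k) \geqslant S+1$ where $S := \varsigma(A_\infty) + \varsigma(\widehat A_\infty)$. By Definition–Proposition \ref{finitudeindice}, for each $k$ select an $\mathrm{L}^2$-orthonormal family $(a_k^i)_{1 \leqslant i \leqslant N_k}$ of eigenfunctions of $\mathcal L_{A_k}$ with non-positive eigenvalues $\mu_k^i \leqslant 0$, where $N_k = \varsigma(A_k)$. They span a subspace $V_k \subset \mathrm{W}^{1,2}(M, T^*M\otimes \mathfrak{g})$ on which $\mathcal Q_{A_k}$ is non-positive, and the goal is to exhibit an element of $V_k$ on which $\mathcal Q_{A_k}$ is in fact strictly positive, producing a contradiction.

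Next, fix $\eta > 0$ small and decompose $M$ into three overlapping regions: the thick part $\Omega^{\mathrm{th}}_\eta = M \setminus \mathrm{B}_\eta(p)$, the bubble $\Omega^{\mathrm b}_{\eta,k} = \mathrm{B}_{\lambda_k/\eta}(p_k)$, and the neck $\mathcal N_{\eta,k} = \mathrm{B}_{2\eta}(p) \setminus \mathrm{B}_{\lambda_k/(2\eta)}(p_k)$, using a subordinate partition of unity. Split each $a_k^i = \alpha_k^i + \beta_k^i + \gamma_k^i$ according to this decomposition. Smooth convergence $A_k \to A_\infty$ on the thick part and $\phi_k^* A_k \to \widehat A_\infty$ on compact subsets of $\R^4$, combined with standard elliptic regularity applied to the eigenvalue equation $\mathcal L_{A_k} a_k^i = \mu_k^i a_k^i$ in the gauges provided by Lemma \ref{approximationlemma} and its bubble analogue, let us extract (up to a diagonal subsequence in $\eta \to 0$) strong $\mathrm{W}^{1,2}_{\mathrm{loc}}$ limits $\alpha_\infty^i$ on $M \setminus\{p\}$ and $\widehat\beta_\infty^i$ on $\R^4$ together with $\mu_k^i \to \mu_\infty^i \leqslant 0$ and (for bubble modes) rescaled eigenvalues $\lambda_k^2 \mu_k^i \to \widehat\mu_\infty^i \leqslant 0$. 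These limits lie respectively in the non-positive spectral subspaces of $\mathcal L_{A_\infty}$ and $\mathcal L_{\widehat A_\infty}$, so they span subspaces of dimension at most $\varsigma(A_\infty)$ and $\varsigma(\widehat A_\infty)$ respectively.

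Since $\dim V_k = N_k \geqslant S+1$ exceeds the total dimension available to the thick and bubble limits, a pigeonhole/linear-algebra argument produces a unit vector $v_k = \sum_i c_k^i a_k^i \in V_k$ whose thick and bubble projections satisfy $\|\alpha(v_k)\|_{\mathrm{L}^2} + \|\beta(v_k)\|_{\mathrm{L}^2} \to 0$ after $k \to \infty$ and $\eta \to 0$, hence $\|\gamma_k\|_{\mathrm{L}^2} \to 1$ with $\gamma_k := \gamma(v_k)$ supported in the neck. Expanding $\mathcal Q_{A_k}(v_k)$ into diagonal plus cross terms and using Corollary \ref{neckC0estimates} to bound $|A_k^g| + |F_{A_k}|^{1/2}$ pointwise by $(CE\,\omega_{R,r})^{1/2}$ in the neck, each cross term $\langle \diff_{A_k}\alpha_k, \diff_{A_k}\gamma_k\rangle$, $\langle F_{A_k},[\alpha_k,\gamma_k]\rangle$, etc., is shown to be $o_\eta(1) + o_k(1)$. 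Applying Theorem \ref{pointcrucial1} to $\gamma_k \in \mathrm{W}^{1,2}_0(\mathcal N_{\eta,k})$ yields
\[ 0 \geqslant \mathcal Q_{A_k}(v_k) \geqslant c_0 \int_{\mathcal N_{\eta,k}} |\gamma_k|^2 \omega_{\eta,\lambda_k/\eta}\, \diff x - o_\eta(1) - o_k(1), \]
which, combined with $\|\gamma_k\|_{\mathrm{L}^2} \to 1$ and a lower bound on the weight near the annulus boundaries, is strictly positive for $k$ large and $\eta$ small, the desired contradiction.

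The main obstacle is the quantitative control of the cutoff cross-terms: the transition layers live exactly where $|x-p| \sim \eta$ and $|x-p_k| \sim \lambda_k/\eta$, and it is precisely there that the \emph{sharp} weight $\omega_{R,r}(x) = |x|^{-2}((|x|/R)^2 + (r/|x|)^2)$ is enhanced relative to the naive weight $|x|^{-2}$. This enhancement is what allows the positive contribution of Theorem \ref{pointcrucial1} to strictly dominate the cutoff error; without the sharp estimate of Theorem \ref{sharpestimateneck} the two terms would only match at the same order, leaving no definite sign, as flagged by the non-compactness example in Section 3.2. A secondary technical difficulty is coordinating the gauges: a finite Coulomb cover on the thick part (as in Lemma \ref{approximationlemma}), the pulled-back frame under $\phi_k$ on the bubble, and the single gauge of Corollary \ref{neckC0estimates} on the neck, with $\mathrm{W}^{1,(4,\infty)}$ control on the transitions to ensure the three pieces $\alpha_k^i, \beta_k^i, \gamma_k^i$ remain $\mathrm{L}^2$-almost-orthogonal and the pigeonhole step is legitimate.
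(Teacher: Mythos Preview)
Your overall architecture---contradiction, extract limits in thick and bubble regions, pigeonhole to produce a vector living in the neck, then invoke Theorem~\ref{pointcrucial1}---is exactly the paper's strategy (Proposition~\ref{majorationindice1} via Lemmas~\ref{prelim1}--\ref{prelim2}). But there is a genuine gap in your normalization choice that breaks the extraction step.

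You diagonalize $\mathcal L_{A_k}$ in the \emph{unweighted} $\mathrm L^2$ inner product. In dimension~4 the $\mathrm L^2$ norm of $1$-forms is not conformally invariant: if $a_k$ is concentrated in the bubble $\mathrm B_{\lambda_k/\eta}(p_k)$ with $\|a_k\|_{\mathrm L^2}=1$, then $\|\phi_k^\ast a_k\|_{\mathrm L^2(\mathrm B_{1/\eta})}\sim \lambda_k^{-1}\to\infty$, so the bubble limit $\widehat\beta_\infty^i$ does not exist without renormalizing by $\lambda_k$; but renormalizing kills the thick limit. Hence there is no single linear map $V_k\to W_{\eta,\infty}\times \widehat W_{\eta,\infty}$ on which to run the pigeonhole. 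Correspondingly, the spectrum of the unweighted $\mathcal L_{A_k}$ is \emph{not} uniformly bounded below (a bubble eigenmode with eigenvalue $\widehat\mu<0$ pulls back to an approximate eigenvalue $\widehat\mu/\lambda_k^2\to-\infty$), so your claim ``$\mu_k^i\to\mu_\infty^i$'' fails, and the a~priori $\mathrm W^{1,2}$ bound you need for compactness---which would follow from $\int|\diff_{A_k}a|^2+|\diff_{A_k}^\ast a|^2\leqslant C\int|F_{A_k}||a|^2$---is unavailable because $\|F_{A_k}\|_{\mathrm L^\infty}$ blows up.

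The paper's fix is to replace $\mathrm L^2$ by the weighted space $\mathrm L^2_{\omega_{\eta,k}}$, where $\omega_{\eta,k}$ is engineered so that (i) $|F_{A_k}|\leqslant\mu_0\,\omega_{\eta,k}$ globally (Proposition~\ref{pointcrucial2}), giving uniform lower spectral bounds and $\mathrm W^{1,2}$ compactness, and (ii) $\lambda_k^2\,\phi_k^\ast\omega_{\eta,k}$ has a nontrivial limit $\widehat\omega_{\eta,\infty}$ on $\R^4$, so that a single $\omega_{\eta,k}$-normalization yields simultaneously finite thick and bubble limits. The neck part of $\omega_{\eta,k}$ coincides with the weight in Theorem~\ref{pointcrucial1}, which is what makes Lemma~\ref{prelim2} close. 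Your proposal recovers the paper's argument once you replace the $\mathrm L^2$-orthonormalization by $\langle\cdot,\cdot\rangle_{\omega_{\eta,k}}$-orthonormalization of eigenfunctions of $\mathcal L_{\eta,k}=\omega_{\eta,k}^{-1}\mathcal L_{A_k}$.
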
 
According to proposition \ref{finitudeindice}, the study of $\varsigma(A)$ can be done using the quadratic form $\mathcal{Q}_A$, which has better properties. The key point of the proof is the representation of the quadratic forms  $\mathcal{Q}_{A_k}$ as $a\mapsto  \left( a, \mathcal{L}_k a \right)_k$ where $\mathcal{L}_k$ is a self-adjoint operator with respect to an inner product $\left(\cdot, \cdot \right)_k$. Due to general index consideration related to Sylvester's law of inertia (appendix \ref{Diagformquad}), we can choose the inner product freely : the index and nullity of $\mathcal{Q}_{A_k}$ are independent of such a choice but can be easily determined as dimensions of the negative eigenspaces of $\mathcal{L}_k$ (see lemma \ref{indiceformequaddiago}). The issue is therefore the choice of a suitable inner product, having in mind the estimates of theorem \ref{pointcrucial1}.

For $k\in\N$ and $\eta>0$ small enough, define, in geodesic coordinates around $p$,  the weight function $\omega_{\eta, k}$ by : \[ \omega_{\eta, k} = \left\{\begin{array}{ll}
   \displaystyle \frac{1}{\eta^2} \left( 1 + \left(\frac{\lambda_k}{\eta^2}\right)^2 \right) & \mathrm{if } | x -p_k | \geqslant \eta,\\ \displaystyle \frac{1}{|x-p_k|^2} \left( \left(\frac{|x-p_k|}{\eta}\right)^2 + \left(\frac{\lambda_k}{\eta|x-p_k|}\right)^2 \right) & \mathrm{if } \lambda_k / \eta \leqslant |
     x -p_k | \leqslant \eta,\\
    \displaystyle \frac{\eta^2}{\lambda_k^2}\left( \frac{(1+(1/\eta)^2)^2}{(1+|x-p_k|^2/\lambda_k^2)^2} + \left(\frac{\lambda_k}{\eta^2}\right)^2 \right) & \mathrm{if } |
     x -p_k | \leqslant \lambda_k / \eta\footnotemark.
   \end{array}\right. \] 
   
   \footnotetext{We could have simply extended $\omega_{\eta,k}$ by a constant (depending on $\eta$ and $k$) inside $\mathrm{B}_{\lambda_k/\eta}$. The reason for the choice is because, after pull-back by the stereographic projection, it will become simply a constant.}
   
   We naturally introduce the inner product $\langle \cdot ,\cdot \rangle _{\omega_{\eta,   k}}$ by \[\| a\|_{\omega_{\eta,
   k}}^2 = \int_{M} |a|_h^2 \omega_{\eta,k} \mathrm{vol}_h\] which defines an operator $\mathcal{L}_{\eta, k}$ by \[ \mathcal{Q}_{A_k} (a) = \langle a, \mathcal{L}_{\eta, k} a \rangle _{\omega_{\eta,   k}} \] explicitly given by \[\mathcal{L}_{\eta, k} a = \omega_{\eta, k}^{-1} \mathcal{L}_{A_k}a = \omega_{\eta, k}^{-1} \left( \Delta_{A_k}  a +
  \star[\star F_{A_k}, a]\right).\]

  Let's now introduce the counterparts of $\omega_{\eta,k}$ and $\mathcal{L}_{\eta,k}$ for the principal limit $A_{\infty}$ and the bubble $\widehat{A}_{\infty}$: define, for $x\in M\backslash \{p\}$, \[\omega_{\eta,\infty}(x)= \lim_{k\to +\infty} {\omega_{\eta,k}} (x)= 1/\eta^2 \] with uniform convergence in $M\backslash\mathrm{B}_\eta(p)$, and, 
  \[\widehat\omega_{\eta,\infty}(y)= \lim_{k\to +\infty} \lambda_k ^2 \phi_k^\ast  \left(\omega_{k,\eta}\right)(y) = \left\{\begin{array}{ll} \displaystyle \frac{1}{\eta^2 |y|^4}
  	& \mathrm{if } |
  	y | \geqslant 1/\eta\\
  	\displaystyle \frac{(1 +
  		\eta^2)^2}{\eta^2}  \frac{1}{ (1 + | y |^2)^2} & \mathrm{if } | y| \leqslant 1/ \eta
  \end{array}\right.\] with uniform convergence in $\mathrm{B}_{1/\eta}$. These correspond to the limits of the weight function $\omega_{\eta,k}$ outside and inside the bubble.
  
The estimates of corollary \ref{neckC0estimates} as well as the $\mathrm{L}^2$-quantization, see theorem \ref{bubbletree}, can be used to show the following global estimates:
\begin{proposition}
  \label{pointcrucial2} There exist $\eta_0, \mu_0>0$ such that for all $\eta<\eta_0$, $x\in M$, the following holds for $k$ large enough :
  \begin{equation}
  | F_{A_k} (x) |_h
    \leqslant \mu_0 \omega_{\eta, k} (x), \label{gradientpoids}
  \end{equation}
  and
  \begin{equation}
    \mathrm{Sp} (\mathcal{L}_{\eta, k}) \subset [- \mu_0 ; + \infty [.
    \label{spectreminoré}
  \end{equation}
\end{proposition}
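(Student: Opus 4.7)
The plan is to first prove the pointwise curvature bound \eqref{gradientpoids} by splitting $M$ into three regions matching the three cases in the definition of $\omega_{\eta,k}$, and then to derive the spectral bound \eqref{spectreminoré} as a quick corollary via the Kato-type inequality $|\langle F_{A_k},[a,a]\rangle|\leq C|F_{A_k}||a|^2$.

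For the \emph{outer region} $|x-p_k|\geq \eta$, the smooth convergence $A_k\to A_\infty$ in $\mathfrak C^\infty_{G,\mathrm{loc}}(M\setminus\{p\})$ implies that $|F_{A_k}|_h$ is uniformly bounded there by $C_\infty:=\Vert F_{A_\infty}\Vert_\infty+1$ for $k$ large. Since $\omega_{\eta,k}\geq 1/\eta^2\geq 1/\eta_0^2$ on this region, choosing $\mu_0\geq C_\infty\eta_0^2$ forces the bound. For the \emph{neck region} $\lambda_k/\eta\leq |x-p_k|\leq\eta$, I apply theorem \ref{sharpestimateneck} centered at $p_k$ with $R=\eta$ and $r=\lambda_k/\eta$. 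The key observation is that by the energy quantization (part 3 of theorem \ref{bubbletree}) together with the smooth convergence outside $p$ and the smooth convergence of $\phi_k^*A_k$ to $\widehat{A}_\infty$, the neck energy
\[E_k=\Vert F_{A_k}\Vert_{\mathrm{L}^2(\mathrm{B}_{4\eta}(p_k)\setminus \mathrm{B}_{\lambda_k/(8\eta)}(p_k))}\]
satisfies $\limsup_k E_k\leq \Vert F_{A_\infty}\Vert_{\mathrm{L}^2(\mathrm{B}_{4\eta}(p))}+\Vert F_{\widehat{A}_\infty}\Vert_{\mathrm{L}^2(\R^4\setminus \mathrm{B}_{1/(8\eta)})}\to 0$ as $\eta\to 0$. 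Hence for $\eta_0$ small and $k$ large, $E_k\leq\varepsilon$, so theorem \ref{sharpestimateneck} yields $|F_{A_k}(x)|\leq C\varepsilon\,\omega_{\eta,\lambda_k/\eta}(x)=C\varepsilon\,\omega_{\eta,k}(x)$, which is precisely the desired bound on this region since the definition of $\omega_{\eta,k}$ on the neck coincides with $\omega_{\eta,\lambda_k/\eta}$.

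The \emph{bubble region} $|x-p_k|\leq \lambda_k/\eta$ requires the rescaling trick. Writing $y=(x-p_k)/\lambda_k\in\overline{\mathrm{B}}_{1/\eta}$, the conformal invariance of the Yang-Mills energy gives $|F_{\phi_k^*A_k}(y)|=\lambda_k^2|F_{A_k}(x)|$, and the $\mathfrak C^\infty_{G,\mathrm{loc}}(\R^4)$ convergence of $\phi_k^* A_k$ to $\widehat{A}_\infty=\pi_* \tilde{A}_\infty$ yields $|F_{\phi_k^*A_k}(y)|\to |F_{\widehat{A}_\infty}(y)|$ uniformly on $\overline{\mathrm{B}}_{1/\eta}$. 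Since $\tilde{A}_\infty$ is smooth on the compact $S^4$ and stereographic projection is conformal with factor $\lambda(y)=2/(1+|y|^2)$, the pointwise norm satisfies $|F_{\widehat{A}_\infty}(y)|\leq C/(1+|y|^2)^2$. Hence for $k$ large and $y\in \overline{\mathrm{B}}_{1/\eta}$,
\[|F_{A_k}(x)|\leq \frac{2C}{\lambda_k^2(1+|y|^2)^2}.\]
Comparing with the lower bound $\omega_{\eta,k}(x)\geq \frac{\eta^2}{\lambda_k^2}\cdot \frac{(1/\eta)^4}{(1+|y|^2)^2}=\frac{1}{\eta^2\lambda_k^2(1+|y|^2)^2}$ coming from the first term in the definition of $\omega_{\eta,k}$ inside the bubble, we conclude $|F_{A_k}(x)|\leq 2C\eta^2\,\omega_{\eta,k}(x)\leq 2C\eta_0^2\,\omega_{\eta,k}(x)$.

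The spectral bound \eqref{spectreminoré} then follows immediately: for any $a\in\mathrm{W}^{1,2}$,
\[\mathcal{Q}_{A_k}(a)\geq \int_M \langle F_{A_k},[a,a]\rangle_h\,\mathrm{vol}_h\geq -C\int_M |F_{A_k}|_h |a|_h^2\,\mathrm{vol}_h\geq -C\mu_0\int_M |a|_h^2\omega_{\eta,k}\,\mathrm{vol}_h,\]
since the other two terms in $\mathcal{Q}_{A_k}$ are nonnegative. Translating this into the weighted inner product gives $\langle a,\mathcal{L}_{\eta,k}a\rangle_{\omega_{\eta,k}}\geq -C\mu_0\Vert a\Vert^2_{\omega_{\eta,k}}$, and since $\mathcal{L}_{\eta,k}$ is self-adjoint with respect to this inner product, $\mathrm{Sp}(\mathcal{L}_{\eta,k})\subset[-C\mu_0,+\infty)$; absorbing $C$ into $\mu_0$ gives the statement. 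The main technical obstacle lies in the matching between the sharp neck estimate of theorem \ref{sharpestimateneck} and the conformal scale of the bubble, which is precisely what motivates the piecewise definition of $\omega_{\eta,k}$; the delicate point is ensuring the energy on the ``pre-neck'' shell $\mathrm{B}_{4\eta}\setminus \mathrm{B}_{\lambda_k/(8\eta)}$ is genuinely small, which is provided by the no-loss-of-energy statement.
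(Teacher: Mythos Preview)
Your proof is correct and follows essentially the same route as the paper: the same three-region decomposition (outer, neck, bubble), the same use of the smooth convergence on the outer and bubble pieces together with the sharp neck estimate of theorem \ref{sharpestimateneck} fed by the energy quantization, and the same derivation of \eqref{spectreminoré} from \eqref{gradientpoids} via $|\langle F_{A_k},[a,a]\rangle|\leq C|F_{A_k}||a|^2$. The only cosmetic difference is that the paper packages the argument by showing the double $\limsup$ of $\||F_{A_k}|_h/\omega_{\eta,k}\|_{L^\infty}$ vanishes (which is slightly stronger than needed), whereas you bound each region directly by a fixed $\mu_0$ depending on $\eta_0$; both yield the statement as written.
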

\begin{proof}
    Introduce  \[ \lambda_0 = \limsup_{ \eta \to 0} \limsup_{k \to +\infty} \left\| \frac{| F_{A_k} |_h}{\omega_{\eta, k}}
     \right\|_{\mathrm{L}^{\infty} (M)}.
  \]  It is enough to show that $\lambda_0 =0$. Indeed, if $\lambda \in\mathrm{Sp}
  (\mathcal{L}_{\eta, k})$,
  $a \in \ker (\mathcal{L}_{\eta, k} - \lambda)$, $\| a \|_{\omega_{\eta, k}}
  = 1$,
  \[ \lambda = Q_{A_k} (a) \geqslant - C\int_{M} | F_{ A_k} |_h | a
     |_h^2 \mathrm{vol}_h \geqslant - C \left\| \frac{| F_{A_k} |_h}{\omega_{\eta, k}}
     \right\|_{\mathrm{L}^{\infty} (M)} \| a \|_{\omega_{\eta, k}}^2\] so \eqref{gradientpoids} and \eqref{spectreminoré} hold for $\eta_0$ small enough and $\mu_0$ large enough.
     Since $A_k\underset{k\to+\infty}{\rightarrow} A_\infty$  in $\mathfrak C^\infty_{G,\mathrm{loc}}(M\backslash\{p\})$, then
     \[   \left\| \frac{| F_{A_k} |_h}{\omega_{\eta, k}}
     \right\|_{\mathrm{L}^{\infty} (M \backslash \mathrm{B}_{\eta} (p_k))} \underset{k\to+\infty}{\rightarrow} \left\| \frac{| F_{A_\infty} |_h}{\omega_{\eta, \infty}}
     \right\|_{\mathrm{L}^{\infty} (M \backslash \mathrm{B}_{\eta} (p))} =\eta^2 \left\| | F_{A_\infty} |_h\right\|_{\mathrm{L}^{\infty} (M \backslash \mathrm{B}_{\eta} (p))}. \]
     Since $\phi_k^*A_k\underset{k\to+\infty}{\rightarrow} \widehat{A}_\infty$ in $\mathfrak C^\infty_{G,\mathrm{loc}}(\R^4)$,  \begin{align*}
         \left\| \frac{|F_{A_k} |^2_h}{\omega_{\eta, k}} \right\|_{\mathrm{L}^{\infty} (\mathrm{B}_{\lambda_k / \eta} (p_k))} &= \left\| \frac{|F_{\phi_k^*A_k} |^2_{\phi_k^*h}}{\lambda_k^2 \phi_k^*\omega_{\eta, k}} \right\|_{\mathrm{L}^{\infty} (\mathrm{B}_{1/ \eta})} \\
         &\underset{k\to+\infty}{\rightarrow} \left\| \frac{|F_{\widehat{A}_\infty} |^2}{\widehat{\omega}_{\eta, \infty}} \right\|_{\mathrm{L}^{\infty} (\mathrm{B}_{1/ \eta})} =\frac{\eta^2}{(1+\eta^2)^2}  \left\| (1+|y|^2)^2|F_{\widehat{A}_\infty} |^2 \right\|_{\mathrm{L}^{\infty} (\mathrm{B}_{1/ \eta})}.
     \end{align*} Since $A_{\infty}$ is a smooth connection on $M$ and $\widehat{A}_\infty$ extends to a smooth connection on $S^4$ (by a point removability result, see, \cite[theorem VI.9]{rivière2015variations} for instance), we deduce that \[\eta^2 \left\| | F_{A_\infty} |_h\right\|_{\mathrm{L}^{\infty} (M \backslash \mathrm{B}_{\eta} (p))} +\frac{\eta^2}{(1+\eta^2)^2}  \left\| (1+|y|^2)^2|F_{\widehat{A}_\infty} |^2 \right\|_{\mathrm{L}^{\infty} (\mathrm{B}_{1/ \eta})} =\gdo{\eta^2}. \] It follows using theorem \ref{sharpestimateneck} and the quantization result (see theorem \ref{bubbletree}) :\begin{align*}
         \lambda_0 &=\limsup_{\eta\to 0} \left(\gdo{\eta^2} +  \limsup_{k \to +\infty} \left\| \frac{| F_{A_k} |_h}{\omega_{\eta, k}}
     \right\|_{\mathrm{L}^{\infty} \left(\mathrm{B}_\eta(p_k)\backslash \mathrm{B}_{\lambda_k/\eta}(p_k)\right)} \right)\\
     &\leqslant C\limsup_{\eta\to 0}   \limsup_{k \to +\infty} \left\|  F_{A_k} 
     \right\|_{\mathrm{L}^{2} (\mathrm{B}_{2\eta}(p_k)\backslash \mathrm{B}_{\lambda_k/2\eta}(p_k))}\\
     &\leqslant 0.
     \end{align*} Hence $\lambda_0=0$, which concludes the proof.
\end{proof}

Define now the operators $\mathcal{L}_{\eta, \infty}$ and $\widehat{\mathcal{L}}_{\eta, \infty}$ by \begin{align*}
    \mathcal{L}_{\eta, \infty} a &= \omega_{\eta, \infty}^{-1} \left( \Delta_{A_\infty}  a +
  \star [\star F_{A_\infty}, a]\right),\\
\widehat{\mathcal{L}}_{\eta, \infty} \widehat{a} &= \widehat{\omega}_{\eta, \infty}^{-1} \left( \Delta_{\widehat{A}_\infty}  \widehat{a} +
   \star[ \star F_{\widehat{A}_\infty}, \widehat{a}]\right),
\end{align*} such that \begin{align*}
    \mathcal{Q}_{A_{\infty}} (a) &= \langle a, \mathcal{L}_{\eta, \infty} a \rangle_{\omega_{\eta,
   \infty}},\\
   \mathcal{Q}_{\widehat{A}_{\infty}} (\widehat{a}) &= \langle \widehat{a}, \widehat{\mathcal{L}}_{\eta, \infty} \widehat{a} \rangle_{\widehat{\omega}_{\eta,
   \infty}}
\end{align*} for $a\in \mathrm{W}^{1,2}(M, T^*M\otimes \mathfrak{g})$ and $\widehat{a} \in \mathrm{W}^{1,2}(\R^4, T^*\R^4\otimes \mathfrak{g})$. The operators $\mathcal{L}_{\eta,k}$ (resp. $\mathcal{L}_{\eta,\infty}$) can be diagonalized with respect to the inner products involving the weights $\omega_{\eta,k}$ (resp. $\omega_{\eta,\infty}$) according to lemmas \ref{indiceformequaddiago} and \ref{diagonalisationdelaformequad}. If $\pi : S^4\to \R^4$ is the stereographic projection and $\tilde{A}_\infty\in \mathfrak{U}_G(S^4)$ is such that $\pi^* \widehat{A}_\infty = \tilde{A}_\infty$,  \[ Q_{\tilde{A}_\infty} (\pi^* \widehat{a}) = Q_{\widehat{A}_\infty} (\widehat{a})\] and using lemma \ref{projectionstereosobo}, diagonalizing $Q_{\widehat{A}_\infty}$ (and $\widehat{\mathcal{L}}_{\eta, \infty}$) with respect to the weight $\widehat{\omega}_{\eta,\infty}$ can be reduce to diagonalizing $Q_{\tilde{A}_\infty}$ with respect to \[\pi^*\widehat\omega_{\eta,\infty}(x):= \left\{\begin{array}{ll} \displaystyle \frac{1}{\eta^2} \left(1+|\pi(x)|^{-2}\right)^2
     & \text{ in } 
    \pi^{-1}(\R^4\setminus\mathrm{B}_{1/ \eta})\\
   \displaystyle \frac{(1 +
     \eta^2)^2}{\eta^2} & \text{ in } \pi^{-1}( \mathrm{B}_{1/ \eta}),
   \end{array}\right.\] for which we can also apply we can apply lemmas \ref{indiceformequaddiago} and \ref{diagonalisationdelaformequad}. This guarantees, as stated before, that studying the index of $A_k$ can done by analyzing the spectrum of $\mathcal{L}_{\eta, k}$. More specifically, the proof of theorem \ref{semicontinuitesupindex} comes down to the proof of the following result: \begin{proposition}  \label{majorationindice1}Consider the following spaces \begin{align*}
    W_{\eta, k}&=\displaystyle \bigoplus_{\lambda \leqslant 0} \ker (\mathcal{L}_{\eta, k} - \lambda),\\
    W_{\eta, \infty} &=\displaystyle \bigoplus_{\lambda \leqslant 0} \ker (\mathcal{L}_{\eta, \infty} - \lambda),\\
    \widehat{W}_{\eta,
\infty} &= \displaystyle \bigoplus_{\lambda \leqslant 0} \ker (\widehat{\mathcal{L}}_{\eta, \infty} - \lambda).
\end{align*} There exists $\eta_0>0$ such that for all $\eta\in ]0;\eta_0[$, for all $k$ large enough: \[\dim W_{\eta, k} \leqslant \dim W_{\eta, \infty}  + \dim \widehat{W}_{\eta,
\infty}.\] \end{proposition}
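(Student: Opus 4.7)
The plan is to argue by contradiction in the spirit of Da Lio--Gianocca--Rivi\`ere, exploiting the positive neck lower bound of Theorem \ref{pointcrucial1} to reduce the question to separate spectral problems on $M$ and on $\R^4$. Suppose that along a subsequence $\dim W_{\eta,k} > N := \dim W_{\eta,\infty} + \dim \widehat W_{\eta,\infty}$. Pick an $\langle\cdot,\cdot\rangle_{\omega_{\eta,k}}$-orthonormal family $(a_k^j)_{j=1}^{N+1}$ of eigenforms of $\mathcal{L}_{\eta,k}$ with eigenvalues $\lambda_k^j \leqslant 0$; by Proposition \ref{pointcrucial2} we have $\lambda_k^j \in [-\mu_0, 0]$, so up to extraction $\lambda_k^j \to \lambda_\infty^j \leqslant 0$, and $\mathcal{Q}_{A_k}(a_k^j) = \lambda_k^j$.

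I would then decompose $a_k^j = a_k^{j,\mathrm{out}} + a_k^{j,\mathrm{neck}} + a_k^{j,\mathrm{in}}$ using cutoffs at scales $\eta$ and $\lambda_k/\eta$: $\chi_k^{\mathrm{out}}$ equal to $1$ outside $\mathrm{B}_\eta(p_k)$ and vanishing on $\mathrm{B}_{\eta/2}(p_k)$, $\chi_k^{\mathrm{in}}$ equal to $1$ on $\mathrm{B}_{\lambda_k/\eta}(p_k)$ and vanishing outside $\mathrm{B}_{2\lambda_k/\eta}(p_k)$, and $\chi_k^{\mathrm{neck}} = 1 - \chi_k^{\mathrm{out}} - \chi_k^{\mathrm{in}}$. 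The crucial step is to show $\|a_k^{j,\mathrm{neck}}\|_{\omega_{\eta,k}} \to 0$. Testing the eigenvalue equation $\mathcal{L}_{A_k} a_k^j = \lambda_k^j \omega_{\eta,k} a_k^j$ against $(\chi_k^{\mathrm{neck}})^2 a_k^j$ and integrating by parts gives
\[ \mathcal{Q}_{A_k}(\chi_k^{\mathrm{neck}} a_k^j) = \lambda_k^j \|\chi_k^{\mathrm{neck}} a_k^j\|^2_{\omega_{\eta,k}} + \mathcal{E}_k^j, \]
where $\mathcal{E}_k^j$ collects cutoff errors of the form $\int |\nabla \chi_k^{\mathrm{neck}}|^2 |a_k^j|^2$ concentrated in the thin transition annuli at scales $\eta$ and $\lambda_k/\eta$. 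On the neck region the weight $\omega_{\eta,k}$ coincides with $\omega_{\eta,\lambda_k/\eta}$ from Theorem \ref{pointcrucial1}, which yields $\mathcal{Q}_{A_k}(\chi_k^{\mathrm{neck}} a_k^j) \geqslant c_0 \|\chi_k^{\mathrm{neck}} a_k^j\|^2_{\omega_{\eta,k}}$; combined with $\lambda_k^j \leqslant 0$ this forces $c_0 \|\chi_k^{\mathrm{neck}} a_k^j\|^2_{\omega_{\eta,k}} \leqslant |\mathcal{E}_k^j|$, and a careful sizing of the transition annuli together with the pointwise bound of Corollary \ref{neckC0estimates} makes the right-hand side vanish in the limit.

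Having disposed of the neck, I would pass to the limit on the two remaining pieces. After gauge transformations analogous to those of Lemma \ref{approximationlemma}, the outer pieces $a_k^{j,\mathrm{out}}$ are bounded in $\mathrm{W}^{1,2}(M\setminus \mathrm{B}_{\eta/2}(p))$, and elliptic regularity together with uniform convergence of $A_k$ away from $p$ yields (up to extraction) convergence to an eigenform $a_\infty^j$ of $\mathcal{L}_{\eta,\infty}$ with eigenvalue $\lambda_\infty^j \leqslant 0$, hence $a_\infty^j \in W_{\eta,\infty}$. Analogously, the rescaled inner pieces $\phi_k^*(a_k^{j,\mathrm{in}})$ converge in $\mathrm{W}^{1,2}_{\mathrm{loc}}(\R^4)$ to an eigenform $\widehat a_\infty^j$ of $\widehat{\mathcal{L}}_{\eta,\infty}$, hence $\widehat a_\infty^j \in \widehat W_{\eta,\infty}$. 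Because the neck contribution vanishes, the orthonormality $\delta_{ij} = \langle a_k^i, a_k^j\rangle_{\omega_{\eta,k}}$ splits additively over the three regions and passes to the limit as $\delta_{ij} = \langle a_\infty^i, a_\infty^j\rangle_{\omega_{\eta,\infty}} + \langle \widehat a_\infty^i, \widehat a_\infty^j\rangle_{\widehat\omega_{\eta,\infty}}$, so the $N+1$ pairs $(a_\infty^j, \widehat a_\infty^j)$ are linearly independent in the $N$-dimensional space $W_{\eta,\infty} \oplus \widehat W_{\eta,\infty}$, a contradiction.

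The main obstacle is the neck vanishing: the eigenforms $a_k^j$ are not compactly supported in the neck, so Theorem \ref{pointcrucial1} cannot be applied directly, and the cutoff errors $\mathcal{E}_k^j$ must be shown to be absorbed by the positive term $c_0 \int |a|^2 \omega_{\eta,k}$. This balance relies crucially on the refined pointwise estimates of Theorem \ref{sharpestimateneck} and Corollary \ref{neckC0estimates}, which as Section 3.2 explains cannot be replaced by the crude $\varepsilon$-regularity bound.
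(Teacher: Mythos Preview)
Your overall contradiction strategy and the use of Theorem \ref{pointcrucial1} are in the right spirit, but the key step---claiming $\|a_k^{j,\mathrm{neck}}\|_{\omega_{\eta,k}} \to 0$ for each individual eigenform---does not go through, and this is where your argument diverges from the paper's.

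The cutoff error $\mathcal{E}_k^j = \int |\nabla \chi_k^{\mathrm{neck}}|^2 |a_k^j|^2$ is supported on the transition annuli $\mathrm{B}_\eta(p_k) \setminus \mathrm{B}_{\eta/2}(p_k)$ and $\mathrm{B}_{2\lambda_k/\eta}(p_k) \setminus \mathrm{B}_{\lambda_k/\eta}(p_k)$. On these regions $|\nabla \chi_k^{\mathrm{neck}}|^2$ is \emph{comparable} to $\omega_{\eta,k}$ (for instance both are of order $\eta^{-2}$ on the outer annulus), so $|\mathcal{E}_k^j| \leqslant C \int_{\text{transition}} |a_k^j|^2 \omega_{\eta,k}$, which is bounded by $\|a_k^j\|^2_{\omega_{\eta,k}} = 1$ but has no reason to vanish as $k\to\infty$: if $a_\infty^j \neq 0$ near $\partial \mathrm{B}_\eta(p)$ this quantity stays bounded away from zero. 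Your invocation of Corollary \ref{neckC0estimates} is a red herring---that result controls $|F_{A_k}|$ and $|A_k^g|$, not the eigenforms $a_k^j$, and so cannot absorb $\mathcal{E}_k^j$. Since the neck mass does not in general vanish, the orthonormality identity $\delta_{ij} = \langle a_\infty^i, a_\infty^j\rangle_{\omega_{\eta,\infty}} + \langle \widehat a_\infty^i, \widehat a_\infty^j\rangle_{\widehat\omega_{\eta,\infty}}$ also fails, and your linear-independence conclusion is unsupported.

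The paper's route (Lemmas \ref{prelim1}--\ref{prelim2}) avoids this trap by reversing the logic. One first extracts weak limits $(a_\infty, \widehat a_\infty)$ for any normalized $a_k \in W_{\eta,k}$, upgrading to strong local convergence away from the neck via the $\mathrm{L}^2_{\omega_{\eta,k}}$ bound on $\mathcal{L}_{\eta,k} a_k$ and elliptic regularity (Lemma \ref{prelim1}). One then proves (Lemma \ref{prelim2}) that $(a_\infty, \widehat a_\infty) \neq (0,0)$: \emph{assuming} both limits vanish, the strong convergence to zero on the outer and inner regions forces the transition-annulus contributions to zero, so the cutoff $\check a_k = \chi_k a_k$ supported in the neck satisfies $\|\check a_k\|_{\omega_{\eta,k}} \to 1$ and $\mathcal{Q}_{A_k}(a_k) - \mathcal{Q}_{A_k}(\check a_k) \to 0$; Theorem \ref{pointcrucial1} then gives $\liminf_k \mathcal{Q}_{A_k}(a_k) \geqslant c_0 > 0$, contradicting $a_k \in W_{\eta,k}$. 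The proposition itself follows by \emph{linearity} rather than orthonormality: if $N+1$ pairs $(\phi_\infty^j, \widehat\phi_\infty^j)$ lie in the $N$-dimensional space $W_{\eta,\infty} \times \widehat W_{\eta,\infty}$, a nontrivial relation $\sum c_j (\phi_\infty^j, \widehat\phi_\infty^j) = 0$ exists; the normalized combination $a_k = \sum c_j \phi_k^j$ then has both limits zero, contradicting Lemma \ref{prelim2}. Note that the vanishing of the transition errors is used only \emph{conditionally}, under the hypothesis that the outer and inner limits are zero---precisely the situation in which it actually holds.
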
 \noindent The proof is based on two preliminary results. \begin{lemma} \label{prelim1}
  If $\eta>0$ is small enough, for all $(a_k)_{k \in \N}$ such that for all
  $k \in \N$, $a_k \in W_{\eta, k}$ and $\| a_k \|_{\omega_{\eta, k}} =
  1$, there exists $a_{\infty} \in \mathrm{W}^{1, 2} (M, T^*M \otimes \mathfrak{g})$ and $\widehat{a}_{\infty}\in \mathrm{W}^{1,2}(\R^4, T^*\R^4\otimes\mathfrak{g})$ such that, up to extraction, $a_k \rightharpoonup
  a_{\infty}$ and $\widehat{a}_k \rightharpoonup
  \widehat{a}_{\infty}$ in $\mathrm{W}^{1, 2}$, where $\widehat{a}_k := \phi_k^*a_k$, and for $\delta > 0$ small enough : \begin{equation}
      \int_{M\backslash \mathrm{B}_{\delta} (p)} (| \diff_{A_{\infty}}
       a_{\infty} |^2_h + | \diff_{A_{\infty}}^{\ast} a_{\infty} |^2_h + |
       a_{\infty} |^2_h) \mathrm{vol}_h = \lim_{k \rightarrow + \infty}
       \int_{M\backslash \mathrm{B}_{\delta} (p)} \left( \left|
       \diff_{A_{_k}} a_k \right|^2_h + | \diff_{A_k}^{\ast} a_k |^2_h + |
       a_k |^2_h \right) \mathrm{vol}_h
  \end{equation} and \begin{equation}
       \int_{\mathrm{B}_{1 / \delta}} (| \diff_{\widehat{A}_{\infty}}
       \widehat{a}_{\infty} |^2 + | \diff_{\widehat{A}_{\infty}}^{\ast}
       \widehat{a}_{\infty} |^2 + | a_{\infty} |^2) \diff x  = \lim_{k
       \rightarrow + \infty} \int_{\mathrm{B}_{1 / \delta}} \left( \left|
       \diff_{\widehat{A}_{_k}} \widehat{a}_k \right|^2 + | \diff_{\widehat{A}_k}^{\ast}
       \widehat{a}_k |^2 + | \widehat{a}_k |^2 \right) \diff x.
  \end{equation}
\end{lemma}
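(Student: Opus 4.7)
The plan is to first bound $(a_k)$ in $W^{1,2}(M)$ using the spectral information of $\mathcal L_{\eta,k}$ together with proposition \ref{pointcrucial2}, extract weak limits in $W^{1,2}$, and then upgrade to strong $W^{1,2}$-convergence on the prescribed regions via interior elliptic regularity, yielding the two norm identities.

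First, since $a_k$ lies in the span of eigenspaces of $\mathcal L_{\eta,k}$ associated to eigenvalues in $[-\mu_0,0]$ and $\|a_k\|_{\omega_{\eta,k}}=1$, the spectral decomposition gives both
\[ \mathcal Q_{A_k}(a_k) = \langle a_k,\mathcal L_{\eta,k} a_k\rangle_{\omega_{\eta,k}} \leqslant 0 \qquad \text{and}\qquad \|\mathcal L_{\eta,k} a_k\|_{\omega_{\eta,k}} \leqslant \mu_0. \]
Combined with the pointwise bound $|F_{A_k}|\leqslant \mu_0\,\omega_{\eta,k}$, the first inequality yields $\|\diff_{A_k} a_k\|_{L^2(M)}^2 + \|\diff_{A_k}^* a_k\|_{L^2(M)}^2 \leqslant C\mu_0\|a_k\|_{\omega_{\eta,k}}^2 = C\mu_0.$ Since $\omega_{\eta,k}$ is bounded below by $c/\eta^2$ globally, $\|a_k\|_{L^2(M)}\leqslant C\eta$, and a Gaffney-type inequality in a controlled global gauge produces $\|a_k\|_{W^{1,2}(M)}\leqslant C$. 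By the conformal invariance of $\|\nabla\cdot\|_{L^2(\R^4)}$ and $\|\cdot\|_{L^4(\R^4)}$ for $1$-forms in dimension four under the dilation $\phi_k$, the pullback $\widehat a_k=\phi_k^*a_k$ is bounded in $W^{1,2}_{\mathrm{loc}}(\R^4)$.

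Extracting subsequences, we obtain $a_k\rightharpoonup a_\infty$ in $W^{1,2}(M)$ and $\widehat a_k\rightharpoonup \widehat a_\infty$ in $W^{1,2}_{\mathrm{loc}}(\R^4)$, with strong convergence in $L^p_{\mathrm{loc}}$ for $p<4$ and a.e.~convergence. To upgrade to the two norm identities we exploit the equation $\mathcal L_{A_k} a_k = \omega_{\eta,k}\,\mathcal L_{\eta,k} a_k$. For fixed $\delta>0$, once $\lambda_k/\eta <\delta$ the weight $\omega_{\eta,k}$ converges uniformly to $1/\eta^2$ on $M\setminus \mathrm{B}_\delta(p)$, so
\[ \|\mathcal L_{A_k} a_k\|_{L^2(M\setminus \mathrm{B}_\delta(p))} \leqslant C\eta^{-2}\|\mathcal L_{\eta,k} a_k\|_{\omega_{\eta,k}} \leqslant C. \]
Since $A_k\to A_\infty$ in $\mathfrak C^\infty_{G,\mathrm{loc}}(M\setminus\{p\})$, working in a Coulomb gauge around each point of $M\setminus \mathrm{B}_\delta(p)$ (available via theorem \ref{uhlenbeckgauge}) turns this into a uniformly elliptic second-order linear PDE for $a_k$ with smoothly converging coefficients and uniformly bounded $L^2$ right-hand side. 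Interior elliptic regularity yields a uniform $W^{2,2}$-bound on $a_k$ on $M\setminus \mathrm{B}_{\delta/2}(p)$; Rellich--Kondrachov then gives strong $W^{1,2}$-convergence $a_k\to a_\infty$ on $M\setminus \mathrm{B}_\delta(p)$. The gauge-invariant integrand $|\diff_{A_k}a_k|^2+|\diff_{A_k}^*a_k|^2+|a_k|^2$ depends continuously on $(A_k,a_k)$ in the local smooth-times-$W^{1,2}$ topology, and the first identity follows. Replaying the same argument after the conformal rescaling $\phi_k$ — using $\phi_k^*A_k\to \widehat A_\infty$ in $\mathfrak C^\infty_{G,\mathrm{loc}}(\R^4)$ and the uniform convergence $\lambda_k^2\phi_k^*\omega_{\eta,k}\to \widehat\omega_{\eta,\infty}$ on the compact set $\overline{\mathrm{B}_{1/\delta}}$ where $\widehat\omega_{\eta,\infty}$ is bounded above and below — produces the second identity.

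The main obstacle is that $a_k\in W_{\eta,k}$ is not an eigenvector, only a combination of such, so we cannot use a pointwise eigenvalue equation. What rescues the argument is spectral calculus: the uniform bound $\|\mathcal L_{\eta,k}a_k\|_{\omega_{\eta,k}}\leqslant \mu_0$ is exactly the $L^2$-control on the inhomogeneity needed for elliptic regularity. A secondary subtlety is the handling of the two scales: the three-regime weight $\omega_{\eta,k}$ is designed precisely so that, after the conformal change $\phi_k$, both limit weights $\omega_{\eta,\infty}$ and $\widehat\omega_{\eta,\infty}$ are bounded away from $0$ and $\infty$ on the relevant compact subsets, ensuring the elliptic argument transfers uniformly to both the macroscopic and bubble scales.
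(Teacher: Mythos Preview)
Your overall strategy is the same as the paper's: bound $\|\diff_{A_k}a_k\|_{L^2}$ and $\|\diff_{A_k}^*a_k\|_{L^2}$ via $\mathcal Q_{A_k}(a_k)\leqslant 0$ and \eqref{gradientpoids}, then use the spectral bound $\|\mathcal L_{\eta,k}a_k\|_{\omega_{\eta,k}}\leqslant\mu_0$ to get $\Delta_{A_k}a_k\in L^2_{\mathrm{loc}}(M\setminus\{p\})$, apply interior elliptic regularity in local Coulomb gauges to obtain $W^{2,2}_{\mathrm{loc}}$ bounds, and conclude by Rellich. The elliptic-regularity half of your argument and its rescaled version for $\widehat a_k$ match the paper closely.

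The gap is the sentence ``a Gaffney-type inequality in a controlled global gauge produces $\|a_k\|_{W^{1,2}(M)}\leqslant C$.'' This is the content of the paper's \emph{Claim}, and it is not a one-liner. In the global $L^{4,\infty}$ gauge one has $\diff a_k=\diff_{A_k}a_k-[A_k,a_k]$, and the term $\|[A_k,a_k]\|_{L^2}\leqslant C\|A_k\|_{L^{4,\infty}}\|a_k\|_{L^{4,2}}$ reintroduces $\|a_k\|_{W^{1,2}}$ on the right-hand side with a constant you cannot make small (only $\|F_{A_k}\|_{L^2}$ is controlled, not $\|A_k\|_{L^{4,\infty}}$), so the bootstrap does not close. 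Your observation that $\omega_{\eta,k}\geqslant 1/\eta^2$ and hence $\|a_k\|_{L^2}\leqslant\eta$ is correct but does not help here. The paper proves the Claim by contradiction: assuming $\|a_k\|_{W^{1,2}}=1$ and the right-hand side tending to zero, it analyses the three regions $M\setminus \mathrm{B}_\eta(p_k)$, $\mathrm{B}_\eta(p_k)\setminus \mathrm{B}_{\lambda_k/\eta}(p_k)$ and $\mathrm{B}_{\lambda_k/\eta}(p_k)$ separately, using on the neck the pointwise estimate $|A_k^{g_k}|\leqslant C\sqrt{\omega_{\eta,k}}$ from corollary~\ref{neckC0estimates} together with a cutoff satisfying $|\diff\chi_{k,\eta}|\leqslant C\sqrt{\omega_{\eta,k}}$ to force $\|a_k\|_{L^{4,2}}\to 0$ there. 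This is exactly the place where the sharp neck estimates (as opposed to mere $\varepsilon$-regularity) are used, and it cannot be bypassed by a generic Gaffney inequality. If you want a more direct route than the paper's contradiction argument, you can instead invoke the Bochner--Weitzenb\"ock formula for $\Delta_{A_k}$ together with \eqref{gradientpoids} to bound $\|\nabla_{A_k}a_k\|_{L^2}$, then Kato's inequality $|\diff|a_k||\leqslant|\nabla_{A_k}a_k|$ to get $\|a_k\|_{L^{4,2}}\leqslant C$, after which $\|[A_k,a_k]\|_{L^2}\leqslant C\|A_k\|_{L^{4,\infty}}\|a_k\|_{L^{4,2}}$ is genuinely bounded; but this is not what ``Gaffney in a global gauge'' says, and it should be spelled out.
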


\begin{proof} Since
  $\mathcal{Q}_{A_k} (a_k) = \langle a_k, \mathcal{L}_{\eta, k} a_k \rangle \leqslant 0$, according to the first part of proposition \ref{pointcrucial2} the following inequality holds :
  \[ \int_{M} \left(| \diff_{A_k} a_k |^2_h +| \diff_{A_k}^* a_k |^2_h \right) \mathrm{vol}_h \leqslant - \int_{M} \langle F_{A_k} , a_k \wedge a_k \rangle \mathrm{vol}_h \leqslant C\mu_0 \int_{M} | a_k
     |^2 \omega_{\eta, k} \mathrm{vol}_h = C \mu_0. \]   
The following inequality, whose proof is postponed, implies that $(a_k)$ is bounded in $\mathrm{W}^{1,2}$ and the first part of the lemma.     

\vspace{0.6cm}

\textbf{Claim:} For all $\eta$ small enough, there exists $C>0$ such that for all $k$ large enough and for all $a\in \mathrm{W}^{1, 2} (M, T^*M \otimes \mathfrak{g})$, \[ \| a \|_{\mathrm{W}^{1,2}(M)} \leqslant C\left( \| \diff_{A_k} a \|_{\mathrm{L}^{2}(M)} +  \| \diff^*_{A_k} a \|_{\mathrm{L}^{2}(M)} +  \|  a \|_{\omega_{\eta,k}} \right). \]

\vspace{5pt}     
\noindent From the second part of proposition \ref{pointcrucial2}, it follows $\mathrm{Sp}(\mathcal{L}_{\eta,k})\cap \R_- \subset [-\mu_0,0]$ and  \[ \| \mathcal{L}_{\eta, k} a_k \|^2_{  \omega_{\eta, k}} = \langle a_k, \mathcal{L}_{\eta, k} ^2 a_k \rangle _{  \omega_{\eta, k}}\leqslant \mu_0^2 \| a_k \|^2_{\omega_{\eta, k}} = \mu_0^2. \] This gives
  \[ \Delta_{A_k} a_k + \star[\star F_{A_k}, a_k] = \sqrt{\omega_{\eta, k}} f_k \]
  with $(f_k)$ bounded in $\mathrm{L}^2$. Therefore \[ \frac{1}{\sqrt{\omega_{\eta,k}}} \Delta_{A_k} a_k  = f_k - \star\left[ \frac{1}{\omega_{\eta,k}} \star F_{A_k}, \sqrt{\omega_{\eta,k}} a_k\right]\] hence the left-hand side is bounded in $\mathrm{L}^2$. Since, for all $\delta>0$, $\omega_{k,
  \eta}$ is uniformly bounded in $M \backslash \mathrm{B}_{\delta}
  (p)$ (with upper bound depending only on $\eta$ and $\delta$), $(\Delta_{A_k} a_k)_k$ is bounded in $\mathrm{L}^2_{\mathrm{loc}}(M\backslash\{p\})$.

Consider two finite open covers $(U_{\alpha})_{\alpha}, (V_{\alpha})_{\alpha}$ of $M\backslash \mathrm{B}_{\delta} (p)$ such that, for all $\alpha$, $\overline{V_\alpha} \subset U_\alpha$ and  there exist $g_k^{\alpha},
g_{\infty}^{\alpha} \in \mathrm{W}^{1, (4, \infty)} (U_{\alpha}, G)$ satisfying
\[  A_k^{g_k^{\alpha}} 
   \overset{C^\infty(U_\alpha)}{\rightarrow}
   A_{\infty}^{g_{\infty}^{\alpha}}, \qquad g^{\alpha \beta}_k :=
   (g_k^{\alpha})^{- 1} g_k^{\beta} 
   \overset{\mathcal{C}^\infty(U_\alpha\cap U_\beta)}{\rightarrow} g^{\alpha \beta}_{\infty}
 \]
and, similarly to the proof of lemma \ref{approximationlemma}, we can assume that $g_k^{\alpha} \rightarrow g_{\infty}^{\alpha}$ a.e. for all $\alpha$, which gives $ g^{\alpha \beta}_{\infty}= (g_{\infty}^{\alpha})^{- 1} g_{\infty}^{\beta}$. Introduce $(\chi_{\alpha})_{\alpha}$ a partition of unity subordinate to the open cover $\{U_{\alpha}\}_{\alpha}$. On $U_{\alpha}$, similarly to \eqref{gaugeinvariancediff}, the following holds
\[ \Delta_{A_k^{g_k^{\alpha}}} \left( (g_k^{\alpha})^{- 1} a_k g_k^{\alpha} \right)=  (g_k^{\alpha})^{- 1} \left( \Delta_{A_k} a_k \right) g_k^{\alpha}. \] Using an elliptic regularity argument, we deduce that $\left( (g_k^{\alpha})^{- 1} a_k g_k^{\alpha} \right)$ is bounded in $\mathrm{W}^{2,2}(V_\alpha)$. Thus $(g_k^{\alpha})^{- 1} a_k g_k^{\alpha} \to (g_\infty^{\alpha})^{- 1} a_\infty g_\infty^{\alpha}$ strongly in $\mathrm{W}^{1,2}(V_\alpha)$. As in lemma \ref{approximationlemma}, using the gauge invariance \eqref{gaugeinvariancediff}, we obtain \begin{align*}
     \int_{M\backslash \mathrm{B}_{\delta} (p)} \left|
       \diff_{A_{_k}} a_k \right|^2_h  \mathrm{vol}_h=&\sum_{\alpha,\beta}\int_{M\backslash \mathrm{B}_{\delta} (p)} \langle \diff_{A_k} (\chi_\alpha a_k) , \diff_{A_k} (\chi_\beta a_k)\rangle_h \mathrm{vol}_h \\
    =& \sum_{\alpha,\beta}\int_{M\backslash \mathrm{B}_{\delta} (p)} \langle \diff_{A_k^{g_k^\alpha}} (\chi_\alpha (g_k^{\alpha})^{-1} a_{k} g_k^{\alpha}) , \diff_{A_k^{g_k^\alpha}} (\chi_\beta g_k^{\alpha \beta }(g_k^{\beta})^{-1} a_{k} g_k^{\beta} g_k^{\beta \alpha})\rangle_h  \mathrm{vol}_h\\
    \underset{k\to+\infty}{\rightarrow} & \sum_{\alpha,\beta}\int_{M\backslash \mathrm{B}_{\delta} (p)} \langle \diff_{A_\infty^{g_\infty^\alpha}} (\chi_\alpha (g_\infty^{\alpha})^{-1} a_\infty g_\infty^{\alpha}) , \diff_{A_\infty^{g_\infty^\alpha}} (\chi_\beta g_\infty^{\alpha \beta }(g_\infty^{\beta})^{-1} a g_\infty^{\beta} g_\infty^{\beta \alpha})\rangle_h  \mathrm{vol}_h\\
    &=\sum_{\alpha,\beta}\int_{M\backslash \mathrm{B}_{\delta} (p)} \langle \diff_{A_\infty} (\chi_\alpha a_\infty) , \diff_{A_\infty} (\chi_\beta a_\infty)\rangle_h \mathrm{vol}_h\\
    &= \int_{M\backslash \mathrm{B}_{\delta} (p)} | \diff_{A_\infty} a_\infty |_h^2 \mathrm{vol}_h.
\end{align*}
The two other terms are handled in the same way. A similar construction can be done with the sequence $(\widehat{a}_k)$ to construct $\widehat{a}_\infty$ satisfying the required properties.
\end{proof}

\begin{proof}[Proof of the claim] By way of contradiction, assume there exists there exists a sequence $(a_k)$ such that, up to a subsequence, $\Vert a_k \Vert_{\mathrm{W}^{1, 2}} = 1$ and \begin{equation}
\Vert \diff_{A_k} a_k \Vert_{\mathrm{L}^2} + \Vert \diff^{\ast}_{A_k} a_k
\Vert_{\mathrm{L}^2} + \Vert a_k \Vert_{\omega_{\eta, k}} \rightarrow 0. \label{hypotheseclaim}
	\end{equation} Therefore, we have \begin{equation}\label{cvgL2poid} \sqrt{\omega_{\eta, k}} a_k \overset{\mathrm{L}^2}{\rightarrow} 0.
  \end{equation} Since $(a_k)$ is bounded in $\mathrm{W}^{1,2}$ and, up to a subsequence, $a_k \rightharpoonup a$ in $\mathrm{W}^{1, 2}$. By the previous convergence $a = 0$.
  
 \noindent We are going to use the two following identites which hold for all $1$-form $a$ and gauge $g$: \begin{align}
\label{diffgaugeinv} \diff\left( g^{-1} a g\right) &=
\diff_{A_k ^{g}} \left( g^{-1} a g\right) - [A_k^g, g^{-1} a g] =  g^{-1} \left(\diff_{A_k ^{g}} a \right)g - [A_k^g, g^{-1} a g]  \\
\label{diffstargaugeinv} \diff^*\left( g^{-1} a g\right) &=
\diff^*_{A_k ^{g}} \left( g^{-1} a g\right) +\star \left[A_k^g,\star \left( g^{-1} a g\right)\right] = g^{-1} \left(\diff^*_{A_k ^{g}} a \right)g  +\star \left[A_k^g,\star \left( g^{-1} a g\right)\right],
\end{align} where we used the pointwise gauge invariance \eqref{gaugeinvariancediff}.

\noindent Since $A_k\to A_{\infty}$ in $\mathfrak{C}^{\infty}_{G,\mathrm{loc}}(M\setminus\{p\})$, near each point $x \in M\backslash \overline{\mathrm{B}}_{\eta/2}$, there exists a sequence of gauges $(g_k)$ such that $(A_k^{g_k})$ converges smoothly. We can apply (\ref{diffgaugeinv}-\ref{diffstargaugeinv}) to $a=a_k$ and $g=g_k$ and use \eqref{hypotheseclaim} to deduce that ${g_k}^{-1}a_k {g_k} \rightarrow 0$ strongly in $\mathrm{W}^{1,
  2}$ hence in $\mathrm{L}^{4, 2}$ (using Sobolev embedding). The gauge-invariance of the $\mathrm{L}^{4, 2}$ norm imply $a_k \rightarrow 0$
  strongly $\mathrm{L}^{4, 2} \left( M\backslash
  \mathrm{B}_{\eta/2}(p)\right)$. We deduce   \[ \| a_k \|_{\mathrm{L}^{4, 2} \left( M\setminus \mathrm{B}_{\eta}(p_k) \right)}\underset{k \rightarrow + \infty}{\rightarrow} 0. \] We can apply once again (\ref{diffgaugeinv}-\ref{diffstargaugeinv}), with $a=a_k$ and $g=\mathrm{id}$, using the fact that $(A_k)$ is bounded in $\mathrm{L}^{4,\infty}(M)$ to obtain: \begin{equation}
  	\label{limiteW12horsbulle}  \| \diff a_k \|_{\mathrm{L}^{ 2} \left( M\setminus \mathrm{B}_{\eta}(p_k) \right)} + \| \diff^* a_k \|_{\mathrm{L}^{ 2} \left( M\setminus \mathrm{B}_{\eta}(p_k) \right)}\underset{k \rightarrow + \infty}{\rightarrow} 0.
  \end{equation}
    
 \noindent A similar reasoning can be applied to $\phi_k^*A_k$ and $\phi_k^*a_k$ on $\mathrm{B}_{1/\eta}$ using the scale invariance of $\diff_{A_k} a_k$ and $\diff^*_{A_k} a_k$: we get that \begin{align}
 	\label{limiteW12dansbulle}   \| \diff a_k \|_{\mathrm{L}^{ 2} \left( \mathrm{B}_{\lambda_k/\eta}(p_k) \right)} + \| \diff^* a_k \|_{\mathrm{L}^{ 2} \left( \mathrm{B}_{\lambda_k/\eta}(p_k) \right)} =&  \| \diff \left(\phi_k^*a_k \right)\|_{\mathrm{L}^{ 2} \left( \mathrm{B}_{1/\eta}\right)} + \| \diff^*\left(\phi_k^*a_k \right)\|_{\mathrm{L}^{ 2} \left( \mathrm{B}_{1/\eta}\right)} \notag\\ &\underset{k \rightarrow + \infty}{\rightarrow} 0. \end{align}
 
 \noindent	Lastly, since $\eta > 0$ is small enough, according to corollary \ref{neckC0estimates} and the quantization of energy (see theorem \ref{bubbletree}), for $k$ large enough, there exists a sequence of gauges $(g_k)$ in $\mathrm{B}_{2\eta}(p_k) \backslash \mathrm{B}_{\lambda_k / 2\eta}(p_k)$ such that, in this annulus, $| A_k^{g_k} | \leqslant C \sqrt{\omega_{\eta, k}}$ and therefore, if we apply (\ref{diffgaugeinv}-\ref{diffstargaugeinv}), with $a=a_k$ and $g=g_k$, we obtain
  \[ | \diff ({g_k}^{-1}a_k {g_k}) | \leqslant |  {g_k}^{-1} \left(\diff_{A_k} a_k\right) g_k | + C
     \sqrt{\omega_{\eta, k}} | {g_k}^{-1}a_k {g_k} | =  | \diff_{A_k} a_k | + C
     \sqrt{\omega_{\eta, k}} | a_k | \]
  so according to \eqref{hypotheseclaim} \[\| \diff ({g_k}^{-1}a_k {g_k}) \|_{\mathrm{L}^2 \left( \mathrm{B}_{2 \eta}(p_k)
  \backslash \mathrm{B}_{\lambda_k / 2 \eta}(p_k) \right)} \underset{k
  \rightarrow + \infty}{\rightarrow} 0\] and similarly for $\diff^{\ast}({g_k}^{-1}a_k {g_k})$. Consider a cutoff function $\chi_{k, \eta} \in \mathcal{C}^\infty_c \left(\R^4, [0, 1]
  \right)$ satisfying the following properties \begin{itemize}
  	\item $\supp \chi_{\eta,k} \subset 
  	\mathrm{B}_{2 \eta}(p_k) \backslash \overline{\mathrm{B}}_{\lambda_k / 2 \eta}(p_k)$, 
  	\item  $\chi_{k, \eta} = 1$ on $\mathrm{B}_{\eta}(p_k) \backslash 
  	\mathrm{B}_{\lambda_k / \eta}(p_k)$,
  	\item $| \diff \chi_{k, \eta} (x) |
  	\leqslant C / | x -p_k |$.
  \end{itemize}  Notice that, since  $\diff \chi_{k, \eta} = 0$ in $\mathrm{B}_{\eta}(p_k) \backslash
  \mathrm{B}_{\lambda_k / \eta}(p_k)$, the following pointwise estimate holds: \begin{equation}\label{cutoffneck}| \diff \chi_{k, \eta} | \leqslant
  C \sqrt{\omega_{\eta, k}}.\end{equation}
  Since for all $a \in \mathrm{W}^{1, 2} \left(
  \mathrm{B}_{2 \eta}(p_k) \backslash \mathrm{B}_{\lambda_k / 2 \eta}(p_k)
  \right)$, $\chi_{k, \eta} a \in \mathrm{W}^{1, 2} (\mathbf{R}^4)$, according to the Sobolev embedding $\dot{\mathrm{W}}^{1,2}(\R^4)\hookrightarrow \mathrm{L}^{4,2}(\R^4)$ there exists 
  $C > 0$ such that
  \[ \| \chi_{k, \eta} a \|_{\mathrm{L}^{4, 2} (\mathbf{R}^4)} \leqslant C
     \left( \| \diff (\chi_{k, \eta} a) \|_{\mathrm{L}^2 (\mathbf{R}^4)} + \|
     \diff^{\ast} (\chi_{k, \eta} a) \|_{\mathrm{L}^2 (\mathbf{R}^4)} \right) \]
   From \eqref{cutoffneck} we deduce
  \[ | \diff (\chi_{k, \eta} a) | = | \diff\chi_{k, \eta} \wedge a +
     \chi_{k, \eta} \diff a | \leqslant C \sqrt{\omega_{\eta, k}} | a | + |
     \diff a | \]
  and similarly for $\diff^{\ast} (\chi_{k, \eta} a)$. This imply the following 
  \[ \| a \|_{\mathrm{L}^{4, 2} \left( \mathrm{B}_{\eta}(p_k) \backslash
     \mathrm{B}_{\lambda_k / \eta}(p_k) \right)} \leqslant C_{\eta} \left(
     \| \diff a \|_{\mathrm{L}^2 \left( \mathrm{B}_{2 \eta}(p_k) \backslash
     \mathrm{B}_{\lambda_k / 2 \eta}(p_k)\right)} + \| \diff^{\ast} a
     \|_{\mathrm{L}^2 \left( \mathrm{B}_{2 \eta}(p_k) \backslash
     \mathrm{B}_{\lambda_k / 2 \eta}(p_k)\right)} + \| a \|_{\omega_{\eta,
     k}} \right) \]
  Applying this inequality to $({g_k}^{-1}a_k {g_k})_k$, we get   \[ \| a_k \|_{\mathrm{L}^{4, 2} \left( \mathrm{B}_{\eta}(p_k) \backslash
     \mathrm{B}_{\lambda_k / \eta}(p_k) \right)} = \| {g_k}^{-1}a_k {g_k}
     \|_{\mathrm{L}^{4, 2} \left( \mathrm{B}_{\eta}(p_k) \backslash
     \mathrm{B}_{\lambda_k / \eta}(p_k) \right)} \underset{k \rightarrow +
     \infty}{\rightarrow} 0,  \] and as before, we apply (\ref{diffgaugeinv}-\ref{diffstargaugeinv}) with $a=a_k$ and $g=\mathrm{id}$ and use the boundedness of $(A_k)$ in $\mathrm{L}^{4,\infty}(M)$ to conclude  \begin{equation}
     \label{limiteW12neck}    \| \diff a_k \|_{\mathrm{L}^{ 2}  \left( \mathrm{B}_{\eta}(p_k) \backslash
     \mathrm{B}_{\lambda_k / \eta}(p_k) \right)} + \| \diff^* a_k \|_{\mathrm{L}^{ 2}  \left( \mathrm{B}_{\eta}(p_k) \backslash
     \mathrm{B}_{\lambda_k / \eta}(p_k) \right)}\underset{k \rightarrow + \infty}{\rightarrow} 0.\end{equation}
  All in all, combining \eqref{limiteW12horsbulle}, \eqref{limiteW12dansbulle} and \eqref{limiteW12neck} we have \[\| \diff a_k \|_{\mathrm{L}^{ 2}  \left( M \right)} + \| \diff^* a_k \|_{\mathrm{L}^{ 2}  \left(M \right)}\underset{k \rightarrow + \infty}{\rightarrow} 0 \] meaning $a_k \rightarrow 0$ in $\mathrm{W}^{1, 2} (M)$. This is a contradiction with $\| a_k \|_{\mathrm{W}^{1, 2}} = 1$.
\end{proof}

\begin{lemma} \label{prelim2}
  With the notations of the previous lemma, $(a_{\infty},
  \widehat{a}_{\infty}) \neq (0, 0)$.
\end{lemma}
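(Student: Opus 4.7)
The plan is to argue by contradiction: suppose $a_\infty = 0$ and $\widehat{a}_\infty = 0$, and show that this forces $\|a_k\|_{\omega_{\eta,k}} \to 0$, contradicting the normalization $\|a_k\|_{\omega_{\eta,k}} = 1$. I decompose the squared weighted norm into three pieces coming from the thick region $M \setminus \mathrm{B}_\eta(p_k)$, the neck $\mathrm{B}_\eta(p_k) \setminus \mathrm{B}_{\lambda_k/\eta}(p_k)$, and the bubble $\mathrm{B}_{\lambda_k/\eta}(p_k)$, and show that each piece tends to zero. The thick and bubble contributions are easy: on the thick region $\omega_{\eta,k} \leq 2/\eta^2$ for $k$ large and $a_k \to 0$ strongly in $\mathrm{L}^2(M)$ by Rellich (the $\mathrm{W}^{1,2}$-bound on $(a_k)$ coming from the claim inside lemma \ref{prelim1}); on the bubble, the change of variables $\phi_k$ rewrites the integral as $\int_{\mathrm{B}_{1/\eta}} |\widehat{a}_k|^2 (\lambda_k^2 \phi_k^*\omega_{\eta,k})$, the weight being uniformly bounded by the convergence $\lambda_k^2 \phi_k^*\omega_{\eta,k} \to \widehat{\omega}_{\eta,\infty}$ on $\mathrm{B}_{1/\eta}$, and $\widehat{a}_k \to 0$ strongly in $\mathrm{L}^2(\mathrm{B}_{1/\eta})$ by Rellich again.

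The neck piece is the heart of the matter. I build a cutoff $\chi_k \in \mathcal{C}^\infty_c\bigl(\mathrm{B}_\eta(p_k)\setminus \mathrm{B}_{\lambda_k/\eta}(p_k),[0,1]\bigr)$ equal to $1$ on the inner neck $\mathrm{B}_{\eta/2}(p_k)\setminus \mathrm{B}_{2\lambda_k/\eta}(p_k)$, with the pointwise estimate $|\diff \chi_k|^2 \leq C\,\omega_{\eta,k}$ (possible because at each end of the neck $\sqrt{\omega_{\eta,k}}$ is of order the inverse thickness of the transition annulus). For $\eta$ small and $k$ large, the hypotheses of theorem \ref{pointcrucial1} are met on $\mathrm{B}_{4\eta}(p_k)\setminus \mathrm{B}_{\lambda_k/8\eta}(p_k)$ thanks to the energy quantization of theorem \ref{bubbletree}, and so
\[ c_0 \int |\chi_k a_k|^2 \,\omega_{\eta,k}\,\mathrm{vol}_h \;\leq\; \mathcal{Q}_{A_k}(\chi_k a_k). \]
It therefore suffices to prove $\mathcal{Q}_{A_k}(\chi_k a_k) \to 0$. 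Writing $\chi_k a_k = a_k - (1-\chi_k)a_k$ and using the bilinearity of $\mathcal{Q}_{A_k}$, I obtain
\[ \mathcal{Q}_{A_k}(\chi_k a_k) \;=\; \mathcal{Q}_{A_k}(a_k) \;-\; 2\,\langle \mathcal{L}_{\eta,k} a_k,(1-\chi_k)a_k\rangle_{\omega_{\eta,k}} \;+\; \mathcal{Q}_{A_k}\bigl((1-\chi_k) a_k\bigr). \]
The first term is non-positive since $a_k \in W_{\eta,k}$. For the cross term, because $a_k$ lies in the sum of eigenspaces of $\mathcal{L}_{\eta,k}$ with eigenvalues in $[-\mu_0,0]$ (proposition \ref{pointcrucial2}), one has $\|\mathcal{L}_{\eta,k} a_k\|_{\omega_{\eta,k}} \leq \mu_0$, so Cauchy--Schwarz bounds the cross term by $\mu_0\,\|(1-\chi_k) a_k\|_{\omega_{\eta,k}}$, whose support lies in the thick region, the bubble, and the two transition annuli, all already controlled. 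The term $\mathcal{Q}_{A_k}\bigl((1-\chi_k)a_k\bigr)$ expands into a derivative part, a $|\diff\chi_k|^2 |a_k|^2$ part, and a curvature part; the derivative part tends to zero by the strong $\mathrm{L}^2$-convergence of $\diff_{A_k}a_k$ and $\diff_{A_k}^* a_k$ off the concentration point and inside the rescaled bubble (lemma \ref{prelim1}), the $\diff\chi_k$ part using $|\diff\chi_k|^2\leq C\omega_{\eta,k}$ together with the transition-annulus estimates, and the curvature part using the sharp bound $|F_{A_k}|\leq \mu_0 \omega_{\eta,k}$ of proposition \ref{pointcrucial2}.

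The main obstacle is the control of $\mathcal{Q}_{A_k}\bigl((1-\chi_k) a_k\bigr)$, where every estimate leans on the sharpness of the available tools: the pointwise bound $|F_{A_k}|\leq \mu_0 \omega_{\eta,k}$ of proposition \ref{pointcrucial2} is what lets the curvature term be absorbed into a weighted $\mathrm{L}^2$-norm, and the strong $\mathrm{W}^{1,2}_{\mathrm{loc}}$-convergence of derivatives from lemma \ref{prelim1} must propagate up to the transition annuli. The key device avoiding a circular estimation of $\mathcal{Q}_{A_k}(\chi_k a_k)$ by $\int |a_k|^2 \omega_{\eta,k}$ on the neck itself is the splitting $\chi_k a_k = a_k - (1-\chi_k)a_k$ combined with the spectral upper bound $\|\mathcal{L}_{\eta,k} a_k\|_{\omega_{\eta,k}} \leq \mu_0$ used on the cross term.
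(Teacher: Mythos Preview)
Your proof is correct and follows essentially the same route as the paper: the same cutoff $\chi_k$, the same appeal to theorem \ref{pointcrucial1} on $\chi_k a_k$, and the same control of the complementary piece $(1-\chi_k)a_k$ via the strong convergences supplied by lemma \ref{prelim1}. The only organizational difference is the endpoint of the contradiction: the paper shows $\mathcal{Q}_{A_k}(a_k)-\mathcal{Q}_{A_k}(\chi_k a_k)\to 0$ and $\|\chi_k a_k\|_{\omega_{\eta,k}}\to 1$, hence $\liminf \mathcal{Q}_{A_k}(a_k)\geq c_0>0$, directly contradicting $a_k\in W_{\eta,k}$; you instead deduce $\|\chi_k a_k\|_{\omega_{\eta,k}}\to 0$ and, combined with the thick and bubble pieces, conclude $\|a_k\|_{\omega_{\eta,k}}\to 0$, contradicting the normalization. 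Your handling of the cross term via the spectral bound $\|\mathcal{L}_{\eta,k}a_k\|_{\omega_{\eta,k}}\leq \mu_0$ is a small but legitimate variation on the paper's direct term-by-term estimate of $\mathcal{Q}_{A_k}(a_k)-\mathcal{Q}_{A_k}(\chi_k a_k)$.
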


\begin{proof} We prove this result by contradiction, using theorem \ref{pointcrucial1}. Assume $(a_{\infty},
  \widehat{a}_{\infty}) = (0, 0)$. Therefore, we have \begin{align} 
       \lim_{k \rightarrow + \infty}
      & \int_{M\backslash \mathrm{B}_{\delta} (p)} \left( \left|
       \diff_{A_{_k}} a_k \right|^2_h + | \diff_{A_k}^{\ast} a_k |^2_h + |
       a_k |^2_h \right) \mathrm{vol}_h =0 \label{hypabs1}\\
 \lim_{k
       \rightarrow + \infty} &\int_{\mathrm{B}_{1 / \delta}} \left( \left|
       \diff_{\widehat{A}_{_k}} \widehat{a}_k \right|^2 + | \diff_{\widehat{A}_k}^{\ast}
       \widehat{a}_k |^2 + | \widehat{a}_k |^2 \right) \diff x =0. \label{hypabs2}
  \end{align}
  
  Let's introduce a cut-off function  $\chi \in \mathcal{C}^\infty (\R_+, [0 ; 1])$ with $\chi_{| [0 ;
  1]} = 1$ and $\mathrm{supp} \chi \subset [0 ; 2]$. Define a new sequence $(\check{a}_k)$ by $\check{a}_k = \chi_k a_k$ where
  \[ \chi_k(x) = \chi \left( 2 \frac{| x - p_k |}{\eta} \right) \left(
     1 - \chi \left( \eta \frac{| x - p_k |}{\lambda_k} \right) \right)
     . \] It is straightforward to check that $\mathrm{supp}\, \check{a}_k \subset \mathrm{B}_{\eta} (p_k) \backslash
  \mathrm{B}_{\lambda_k / \eta} (p_k)$ \textit{i.e.} $ \check{a}_k \in  \mathrm{W}^{1, 2}_0
  \left( \mathrm{B}_{\eta} (p_k) \backslash \mathrm{B}_{\lambda_k / \eta} (p_k)
  \right)$ and  $\check{a}_k  = a_k $ on $\mathrm{B}_{\eta / 2} (p_k) \backslash \mathrm{B}_{2
  	\lambda_k / \eta} (p_k)$.
  
  \noindent Therefore, if $k$ is large enough, \begin{eqnarray*}
    \int_M | \diff_{A_k} (a_k - \check{a}_k) |^2 & = & \int_{M\backslash
    \mathrm{B}_{\eta / 2} (p_k)} | \diff_{A_k} (a_k - \check{a}_k) |^2 +
    \int_{\mathrm{B}_{2 \lambda_k \backslash \eta} (p_k)} | \diff_{A_k} (a_k -
    \check{a}_k) |^2
  \end{eqnarray*} Using the formula  
  \[ \diff_{A_k} (a_k - \check{a}_k) = - \diff \chi_k \wedge a_k + (1 -
     \chi_k) \diff_{A_k} a_k \]
  we deduce the following point-wise estimates in $M\backslash \mathrm{B}_{\eta / 2} (p_k)$ :
  \[ | \diff_{A_k} (a_k - \check{a}_k) |^2 \leqslant C \left( \frac{| a_k
     |^2}{\eta^2} + | \diff_{A_k} a_k |^2 \right) \]
 and in $\mathrm{B}_{2 \lambda_k / \eta} (p_k)$ :
  \[ | \diff_{A_k} (a_k - \check{a}_k) |^2 \leqslant | \diff_{A_k} (a_k -
     \check{a}_k) |^2 \leqslant C \left( \frac{| a_k |^2}{(\lambda_k / \eta)^2}
     + | \diff_{A_k} a_k |^2 \right) \]
 which lead to \begin{align*}
      \| \diff_{A_k} (a_k - \check{a}_k) \|_{\mathrm{L}^2 (M)}^2  \leqslant &
    C_{\eta} \left( \| a_k \|_{\mathrm{L}^2 \left( M\backslash \mathrm{B}_{\eta /
    2} (p_k) \right)}^2 + \| \diff_{A_k} a_k \|^2_{\mathrm{L}^2 \left(
    M\backslash \mathrm{B}_{\eta / 2} (p_k) \right)} \right)\\
    & + C_{\eta} \left( \| \widehat{a}_k \|_{\mathrm{L}^2 \left( \mathrm{B}_{2 /
    \eta} \right)}^2 + \| \diff_{\widehat{A}_k} \widehat{a}_k \|_{\mathrm{L}^2 \left(
    \mathrm{B}_{2 / \eta} \right)}^2 \right).
 \end{align*} Combining it with \eqref{hypabs1} and \eqref{hypabs2}, we get\[\| \diff_{A_k} a_k - \diff_{A_k} \check{a}_k
  \|_{\mathrm{L}^2 (M)} \underset{k\to+\infty}{\rightarrow} 0.\] Similarly \[\| \diff_{A_k}^{\ast} a_k
  - \diff_{A_k}^{\ast} \check{a}_k \|_{\mathrm{L}^2 (M)} \underset{k\to+\infty}{\rightarrow} 0.\] Additionally,
  \[ \omega_{\eta, k} | a_k - \check{a}_k |^2 = \omega_{\eta, k} (1 -
     \chi_k)^2 | a_k |^2 \]
  which gives
  \[ \| a_k - \check{a}_k \|_{\omega_{\eta, k}}^2 \leqslant C_{\eta} \left(
     \| a_k \|^2_{\mathrm{L}^2 \left( M\backslash \mathrm{B}_{\eta / 2} (p)
     \right)} + \| \widehat{a}_k \|^2_{\mathrm{L}^2 \left( \mathrm{B}_{2 / \eta}
     \right)} \right) \]
  and proves that \[\| a_k - \check{a}_k \|_{\omega_{\eta, k}} \underset{k\to+\infty}{\rightarrow} 0.\]
  Finally, since $a_k \wedge \check{a}_k = \check{a}_k \wedge a_k = \chi_k a_k
  \wedge a_k$, we have \begin{align*}
      \left| \int_M (\langle F_{A_k}, a_k \wedge a_k \rangle - \langle F_{A_k},
    \check{a}_k \wedge \check{a}_k \rangle) \right| & =  \left| \int_M
    \langle F_{A_k}, (a_k + \check{a}_k) \wedge (a_k - \check{a}_k) \rangle
    \right|\\
    & \leqslant  C \int_M \omega_{\eta, k} | a_k + \check{a}_k | | a_k -
    \check{a}_k |\\
    & \leqslant  C \| a_k + \check{a}_k \|_{\omega_{\eta, k}} \| a_k -
    \check{a}_k \|_{\omega_{\eta, k}}.
  \end{align*} We conclude that $\displaystyle \| \check{a}_k \|_{\omega_{\eta, k}} \underset{k \rightarrow +
     \infty}{\rightarrow} 1$ and $\displaystyle \mathcal{Q}_{A_k} (a_k)
  - \mathcal{Q}_{A_k} (\check{a}_k) \underset{k \rightarrow +
     \infty}{\rightarrow} 0$. However, according to theorem \ref{pointcrucial1}, $\mathcal{Q}_{A_k} (\check{a}_k) \geqslant c_{0} \|
  \check{a}_k \|_{\omega_{\eta, k}}$ so
  \[ \liminf_{k\to+\infty}  \mathcal{Q}_{A_k} (a_k) \geqslant c_{0} > 0, \]
  which contradicts the fact that $a_k \in W_{\eta, k}$.
\end{proof}

\begin{proof}[Proof of proposition \ref{majorationindice1}]
  Let $N \in \N^{\ast}$ such that there exists infinitely many $k \in
  \N$ satisfying $\dim W_{\eta, k} \geqslant N$. For every such $k$, let $(\phi_k^j)_{j \in \llbracket
  1 ; N \rrbracket}$ be an orthonormal family (for $\langle  \cdot, \cdot 
  \rangle_{\omega_{\eta, k}}$) of $W_{\eta, k}$, made up of eigenvectors of $\mathcal{L}_{\eta, k}$. Write $\lambda_k^j$, the corresponding eigenvalues. Using the second part of \ref{pointcrucial2},
 up to extraction, $(\lambda_k^j)_{k \in
  \N}$ converge for all $j$. Define the (non-positive) limit as $\lambda_{\infty}^j$. Denote $(\phi^j_{\infty},
  \widehat{\phi}^j_{\infty})$ the corresponding limits given by lemma \ref{prelim1}, with $(\phi^j_{\infty},
  \widehat{\phi}^j_{\infty}) \neq (0, 0)$ (according to lemma \ref{prelim2}) satisfying \begin{align*}
      \mathcal{L}_{\eta, \infty} \phi^j_{\infty} &= \lambda_{\infty}^j
     \phi^j_{\infty},\\* \widehat{\mathcal{L}}_{\eta, \infty} \widehat{\phi}^j_{\infty} &=
     \lambda_{\infty}^j \widehat{\phi}^j_{\infty}.
  \end{align*}
  In particular,  $(\phi^j_{\infty}, \widehat{\phi}^j_{\infty}) \in W_{\eta, \infty} \times
  \widehat{W}_{\eta, \infty}$. If $N > \dim W_{\eta, \infty} + \dim \widehat{W}_{\eta,
  \infty}$, there exists a linear relation \begin{equation} \label{dependance}
      \sum_{j = 1}^N c_j (\phi^j_{\infty}, \widehat{\phi}^j_{\infty}) = 0, 
  \end{equation}
  where $\displaystyle\sum_{j = 1}^N |c_j|^2 = 1$. If we then define $a_k = \displaystyle\sum_{j = 1}^N c_j
  \phi^j_k$, it satisfies $a_k \in W_{\eta, k}$, $\| a_k \|_{\omega_{\eta, k}} = 1$, and \eqref{dependance} implies that the corresponding $(a_\infty, \widehat{a}_\infty)$ given by lemma \ref{prelim1} both vanish, which contradicts lemma \ref{prelim2}. The following inequality thus holds : $N \leqslant \dim W_{\eta, \infty} + \dim
  \widehat{W}_{\eta, \infty}$.
  
  We have proved there exist only finitely many $k \in
  \N$ such that $\dim W_{\eta, k} > \dim W_{\eta, \infty} + \dim
  \widehat{W}_{\eta, \infty}$, so for $k$ large enough,
  \[ \dim W_{\eta, k} \leqslant \dim W_{\eta, \infty} + \dim \widehat{W}_{\eta,
     \infty} . \] 
\end{proof}

 \newpage
 \appendix
\section{Annexes}
\subsection{weak connections and its notion of convergence}
\label{weak}
In this section we give some details about the notion of weak-connection developed by Petrache and Rivière in \cite{PETRACHE2017469}.\\

We start with the notion of weak-principal bundle introduced by Isobe, see \cite{Isobe1,Isobe2}. We set 
$$\mathcal P_G^{k,p}(M) :=\left\{ \begin{array}{l}
    P=(\{ U_\alpha\}_{\alpha\in I}, \{g^{\alpha\beta}\}_{\alpha,\beta \in I} ) \text{ s.t. } \{ U_\alpha\}_{\alpha\in I} \text{ is a good cover\footnotemark of }M,\\
    g^{\alpha\beta}\in \mathrm{W}^{k,p}(U_\alpha \cap U_\beta,G) \text{ satisfy } g^{\alpha\beta}.g^{\beta\gamma} = g^{\alpha\gamma} \text{ a.e. on } U_\alpha \cap U_\beta \cap U_\gamma .
\end{array}\right\}
$$
\footnotetext{That is to say each $U_\alpha$ and all of its intersections are diffeomorphic to a ball.}

The fact that $\mathcal{C}^\infty(\mathrm{B}^4,G)$ is dense in $\mathrm{W}^{2,2}(\mathrm{B}^4,G)$ leads to the fact that $\mathcal P^{2,2}_G$-bundles can be approximated by smooth bundles, see \cite{Sil}. But as well explain in remark 22 of \cite{Sil}, the isomorphism class of the approximation is \textit{a priori} not unique. Nevertheless, as soon as , the $\mathrm{W}^{2,2}$-bundle $P$ is equipped with a $\mathrm{W}^{1,2}$-connection $A$, see below, the isomorphism class of the approximation associated to the pair $(P,A)$ becomes unique. Indeed, the local trivialization are given the Uhlenbeck's Coulomb gauge of the connection, then, as proved in theorem V.5 of \cite{rivière2015variations}, the change of gauge are $C^0$ and  then approximate by smooth bundle. So let us introduce the space of $\mathrm{W}^{1,2}$-connections. We can then define the space of $\mathrm{W}^{1,2}$-connections on $P=(\{ U_\alpha\}_{\alpha\in I}, \{g^{\alpha\beta}\}_{\alpha,\beta \in I} ) \in \mathcal P^{2,2}(M)$, as 
$$\mathcal A_G^{1,2}(P) :=\left\{ \begin{array}{l}
	A=\{ A_\alpha\}_{\alpha\in I} \text{ s.t. } A_\alpha\in \mathrm{W}^{1,2}(U_\alpha, \Lambda^1 U_\alpha \otimes \mathfrak g) \text{ for all } \alpha \in I \\
	\text{and }A_\beta =(g^{\alpha\beta})^{-1} dg^{\alpha\beta} +(g^{\alpha\beta})^{-1} A_\alpha g^{\alpha\beta}  \text{ on } U_\alpha \cap U_\beta .
\end{array}\right\}
$$
 We will say that two couple of bundle-connection $(P,A),(P',A')$ such that $P,P' \in \mathcal P_G^{2,2}(M)$,  $A\in \mathcal A_G^{1,2}(P)$  and $A'\in \mathcal A_G^{1,2}(P')$ are $\mathrm{W}^{k,l}$\textbf{-equivalent}, we write it $(P',A')=k((P,A))$, if there exists a good cover, which is a refinement of the cover that define $P$ and $P'$, $\{ U_\alpha\}_{\alpha\in I}$ such that $P=(\{ U_\alpha\}_{\alpha\in I}, \{g^{\alpha\beta}\}_{\alpha,\beta \in I} )$ and $P'=(\{ U_\alpha\}_{\alpha\in I}, \{h^{\alpha\beta}\}_{\alpha,\beta \in I} )$, and they exists $k^{\alpha}\in \mathrm{W}^{k,l}(U_\alpha, G)$ such that 
$$h^{\alpha\beta}=k^{\alpha} g^{\alpha\beta}(k^{\beta})^{-1} \text{ a.e. on } U_\alpha \cap U_\beta$$
and
$$A_\alpha=(k^{\alpha})^{-1} dk^{\alpha}  +(k^{\alpha})^{-1}A_{\alpha}'k^{\alpha}  \text{ a.e. on } U_\alpha.$$
We can then consider classes of equivalent weak principal bundles
$$ [(P,A)]_{k,l}=\{(P',A') \text{ s.t. } (P',A')=k((P,A)) \text{ for some } k^{\alpha}\in \mathrm{W}^{k,l}(U_\alpha, G)\} .$$
It is important to note that, thanks to the approximation result of \cite{Sil}, $\mathrm{W}^{2,2}$-equivalent bundles are topologically equivalent. But, because we can build global section\footnote{ It is a consequence of the fact that any map from $\mathrm{W}^{1,3}(S^3,S^3)$ can be extended to a map $\mathrm{W}^{2,p}(\mathrm{B}^4,S^3)$ for $p<2$, see XXX.} into $\mathrm{W}^{2,p}$ for $p<2$, then there is only one class of $\mathrm{W}^{2,p}$-equivalent bundles for $p<2$, namely the trivial class. We denote
$$\hat{\mathcal P}^{2,2}(M)=\{[(P,A)]_{2,2} \text{ s.t. } P \in \mathcal P^{2,2}(M) \text{ with } A \in \mathcal A_G^{1,2}(P)  \} $$
Here are two remarks. First, see \cite{MW},  a $\mathcal{C}^0$-bundle can be approximate by a smooth bundle: there exists  $(g_k^{\alpha \beta})_k \in \mathcal{C}^\infty(U_\alpha\cap U_\beta, G)$ satisfying the cocylce condition such that $g_k^{\alpha \beta} \rightarrow g^{\alpha\beta}$ in  $\mathrm{L}^{\infty}_{\mathrm{loc}} \left( U_\alpha\cap U_\beta\right)$. Secondly, see section 10 chapter 2 \cite{BottTu},  a smooth bundle over $\mathrm{B}^4$ is trivial, i.e we can find some $\rho_{\alpha} \in  C^{\infty}(U_\alpha, G)$ such that $g^{\alpha\beta} = (\rho^{\alpha})^{- 1} \rho^{\beta}$. The following result, lemma \ref{Ulem}, is a weak version of those  remarks, i.e. it proves the existence of $\rho^\alpha$ for a continuous $\mathcal P^{2,2}_G$-bundle with a $\mathrm{L}^\infty\cap \mathrm{W}^{2,2}$ control on the $\rho^\alpha$. The lemma is due to Uhlenbeck in the case of $\mathrm{W}^{2,p}$-bundles with $p>2$, see section 3 of \cite{UhlenbeckKarenK1982CwLb}, we improve it to $\mathrm{W}^{2,2}\cap C^0$-connections. In fact the key point to prove the lemma consists in being able to restrict to gauge in $\tilde G$ where $\tilde G$ is neighborhood of $\mathrm{id}$ in $G$ where  $\exp^{-1}: \tilde G \rightarrow \mathfrak g$ is well defined. This is where the $C^0$ assumption is crucial. We also remind the following useful lemma, see lemma 3.1 \cite{UhlenbeckKarenK1982CwLb}.
\begin{lemma}
	\label{techlemma} Let $G$ be a compact Lie group with an equivariant metric. Then there exists $f_G>0$ such that if $h,g,\rho \in G$ with $\vert \exp^{-1} hg\vert \leq f_G$ and $\vert \exp^{-1} \rho\vert <f_G$, then $h\rho g\in \tilde G$ and 
	$$ \vert \exp^{-1} h\rho g\vert \leq 2(\vert\exp ^{-1} hg\vert +\vert \exp^{-1} \rho \vert ).$$
\end{lemma}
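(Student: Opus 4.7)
The plan is to reduce this to a local computation near the identity via the Baker--Campbell--Hausdorff (BCH) formula, using the crucial algebraic identity
\[ h\rho g = (hg)\,(g^{-1}\rho g). \]
The point of this factorization is that $hg$ is close to the identity by hypothesis and, thanks to the bi-invariance (``equivariance'') of the metric on $G$, the conjugate $g^{-1}\rho g$ has exactly the same ``size'' as $\rho$, namely $|\exp^{-1}(g^{-1}\rho g)| = |\exp^{-1}(\rho)|$.

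More precisely, first fix a neighborhood $V \subset \mathfrak g$ of $0$ on which $\exp : V \to \tilde G$ is a diffeomorphism, and a radius $r_0>0$ such that for all $X,Y\in V$ with $|X|,|Y|\le r_0$ the BCH series
\[ Z(X,Y) = X + Y + \tfrac{1}{2}[X,Y] + \cdots \]
converges absolutely in $V$ and satisfies the quadratic remainder estimate
\[ \bigl| Z(X,Y) - X - Y \bigr| \;\leq\; C\bigl(|X|+|Y|\bigr)^2 \]
for some constant $C>0$ depending only on $G$ and the chosen bi-invariant norm. Such $r_0$ and $C$ exist by standard smoothness of the group multiplication near the identity.

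Next, set $X := \exp^{-1}(hg)$ and $Y := \exp^{-1}(\rho)$, both of norm $\le f_G$, and put $Y' := \mathrm{Ad}_{g^{-1}} Y$, so that $g^{-1}\rho g = \exp(Y')$. By the bi-invariance of the metric, $\mathrm{Ad}_{g^{-1}}$ acts as an isometry of $\mathfrak g$, hence $|Y'| = |Y| \leq f_G$. The factorization above then reads
\[ h\rho g = \exp(X)\exp(Y') = \exp\bigl(X + Y' + R(X,Y')\bigr), \]
with $|R(X,Y')|\le C(|X|+|Y'|)^2 = C(|X|+|Y|)^2$. In particular, provided $f_G$ is chosen so that $2f_G + C(2f_G)^2 \le r_0$, the argument of $\exp$ above lies in $V$, so $h\rho g\in\tilde G$ and
\[ |\exp^{-1}(h\rho g)| \;\leq\; |X| + |Y| + C(|X|+|Y|)^2 \;\leq\; \bigl(|X|+|Y|\bigr)\bigl(1 + 2Cf_G\bigr). \]
Finally, shrinking $f_G$ further so that $2Cf_G\le 1$ gives the desired bound $|\exp^{-1}(h\rho g)|\le 2(|X|+|Y|)$.

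There is really no hard step here: the whole content is the clever factorization $h\rho g=(hg)(g^{-1}\rho g)$ combined with Ad-invariance, which is what lets the two hypotheses $|\exp^{-1}hg|\le f_G$ and $|\exp^{-1}\rho|\le f_G$ interact through BCH. The only thing one has to be careful about is choosing $f_G$ small enough in a single step so that both the ``stay inside $V$'' condition and the factor-of-$2$ estimate hold simultaneously, which is handled at the end by a straightforward adjustment of constants.
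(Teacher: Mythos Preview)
Your proof is correct. The paper does not supply its own proof of this lemma; it simply cites it as lemma~3.1 of Uhlenbeck \cite{UhlenbeckKarenK1982CwLb}. Your argument via the factorization $h\rho g=(hg)(g^{-1}\rho g)$, $\mathrm{Ad}$-invariance of the norm on $\mathfrak g$, and the quadratic BCH remainder is precisely the standard way to establish the estimate, and is essentially what underlies Uhlenbeck's original proof as well.
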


\begin{lemma}
	\label{Ulem}
	Let $\{U_\alpha \}_{\alpha \in I}$ be a good cover of $\mathrm{B}^4$. There exists $\delta_G >0$, depending only on $G$, such that for any pair of co-chains,
	$$g^{\alpha\beta}, h^{\alpha\beta} \in \mathrm{W}^{2,2}\cap C^0(U_\alpha\cap U_\beta,G)$$
	satisfying
	$$g^{\alpha\gamma}g^{\gamma\beta}=g^{\alpha\beta} \text{ and }  h^{\alpha\gamma}h^{\gamma\beta}=h^{\alpha\beta} \text{ on } U_\alpha \cap U_\beta \cap U_\gamma \text{ for all } \alpha, \beta,\gamma \in I$$
	and
	$$m:= \max_{\alpha,\beta}\Vert g^{\beta\alpha}h^{\alpha\beta}-\mathrm{id}\Vert_{\mathrm{L}^\infty(U_\alpha\cap U_\beta)}<\delta_G.$$
	Then there exists a constant $C>0$, depending only on $G$ and $\vert I\vert$, a refinement $V_\alpha \subset U_\alpha$ and $\rho^\alpha \in \mathrm{W}^{2,2}\cap C^0(V_\alpha,G)$ such that 
	$$ h^{\alpha\beta}=(\rho^\alpha)^{-1}g^{\alpha\beta}\rho^\beta \text{ on } V_\alpha \cap V_\beta \text{ for all } \alpha,\beta \in I,$$
	moreover
	$$ \Vert \rho^{\alpha}-\mathrm{id} \Vert_{\mathrm{L}^\infty (V_\alpha)} \leq C m$$
	and 
	$$ \Vert \rho^{\alpha}-\mathrm{id} \Vert_{\mathrm{W}^{2,2} (V_\alpha)} \leq  m  P( (\Vert g^{ab} \Vert_{\mathrm{W}^{2,2} (U_a\cap U_b)})_{a,b\in I},(\Vert h^{ab} \Vert_{\mathrm{W}^{2,2} (U_a\cap U_b)})_{a,b\in I}), $$
	where $P$ is polynomial expression whose coefficients depends only on $G$ and $\{ U_\alpha\}$.
\end{lemma}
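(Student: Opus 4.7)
The plan is to construct the $\rho^\alpha$ by induction on an enumeration $\alpha_1,\dots,\alpha_N$ of $I$, following Uhlenbeck's argument in the $\mathrm{W}^{2,p}$ ($p>2$) case and adapting it to the critical $\mathrm{W}^{2,2}\cap C^0$ setting. First I would fix in advance a nested chain of refinements $V^{(N)}_\alpha \Subset \cdots \Subset V^{(1)}_\alpha \Subset U_\alpha$, set $\rho^{\alpha_1} \equiv \mathrm{id}$ on $V^{(1)}_{\alpha_1}$, and at step $j+1$ assume that $\rho^{\alpha_1},\dots,\rho^{\alpha_j}$ have been built on $V^{(j)}_{\alpha_1},\dots,V^{(j)}_{\alpha_j}$ satisfying $h^{\alpha_i\alpha_k} = (\rho^{\alpha_i})^{-1} g^{\alpha_i\alpha_k} \rho^{\alpha_k}$ on pairwise overlaps. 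The relation forces
\[ \rho^{\alpha_{j+1}} := (g^{\alpha_i\alpha_{j+1}})^{-1}\,\rho^{\alpha_i}\, h^{\alpha_i\alpha_{j+1}} \qquad \text{on } V^{(j)}_{\alpha_i} \cap V^{(j+1)}_{\alpha_{j+1}}. \]

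The first thing to verify is that this prescription is independent of the choice of $i\leq j$ on triple overlaps. Plugging in the cocycle relations $g^{\alpha_k\alpha_i} g^{\alpha_i\alpha_{j+1}} = g^{\alpha_k\alpha_{j+1}}$ and $h^{\alpha_k\alpha_i} h^{\alpha_i\alpha_{j+1}} = h^{\alpha_k\alpha_{j+1}}$, a direct manipulation shows that the two prescriptions agree if and only if $\rho^{\alpha_i} = (g^{\alpha_k\alpha_i})^{-1} \rho^{\alpha_k} h^{\alpha_k\alpha_i}$, which is exactly the inductive hypothesis. Hence $\rho^{\alpha_{j+1}}$ is well-defined on $W := \bigcup_{i\leq j}\bigl( V^{(j)}_{\alpha_i} \cap V^{(j+1)}_{\alpha_{j+1}}\bigr)$. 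To extend to $V^{(j+1)}_{\alpha_{j+1}}$, I would use the $\mathrm{L}^\infty$-smallness obtained below to write $\rho^{\alpha_{j+1}} = \exp(\xi)$ with $\xi \in \mathrm{W}^{2,2}\cap C^0(W,\mathfrak{g})$, extend $\xi$ to $\tilde\xi \in \mathrm{W}^{2,2}\cap C^0(V^{(j+1)}_{\alpha_{j+1}},\mathfrak{g})$ by a smooth cutoff and a standard extension operator, and set $\rho^{\alpha_{j+1}} := \exp(\tilde\xi)$. This is where the successive refinements are consumed.

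For the $\mathrm{L}^\infty$ estimate I would iterate Lemma \ref{techlemma}: at each step the product $(g^{\alpha_i\alpha_{j+1}})^{-1} \rho^{\alpha_i} h^{\alpha_i\alpha_{j+1}}$ is rearranged so that $g^{\alpha_{j+1}\alpha_i}\,h^{\alpha_i\alpha_{j+1}}$ plays the role of the pair $(h,g)$ and $\rho^{\alpha_i}$ the role of $\rho$ in the statement of the lemma. This yields $\|\rho^{\alpha_{j+1}} - \mathrm{id}\|_{\mathrm{L}^\infty} \leq 2\bigl( m + \|\rho^{\alpha_i} - \mathrm{id}\|_{\mathrm{L}^\infty}\bigr)$. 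Since the induction terminates in at most $|I|$ steps, the constants compound to a bound $Cm$ with $C = C(G,|I|)$, and $\delta_G$ is fixed small enough that $Cm < f_G$ so every application of Lemma \ref{techlemma} stays in the validity range of $\exp^{-1}$; this is also what guarantees that the cutoff-and-exponentiate extension above remains $G$-valued.

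The $\mathrm{W}^{2,2}$ estimate is the most delicate point and, I expect, the main obstacle. On $\mathrm{B}^4$ the space $\mathrm{W}^{2,2}$ is critical and is not itself an algebra, so one cannot naively multiply derivatives. The key observation is that $\mathrm{W}^{2,2}\cap \mathrm{L}^\infty$ is a Banach algebra with tame estimate
\[ \|fg\|_{\mathrm{W}^{2,2}} \leq C\bigl(\|f\|_{\mathrm{L}^\infty}\|g\|_{\mathrm{W}^{2,2}} + \|f\|_{\mathrm{W}^{2,2}}\|g\|_{\mathrm{L}^\infty}\bigr), \]
and $\exp,\exp^{-1}$ are smooth on $\tilde G$, hence operate on $\mathrm{W}^{2,2}\cap \mathrm{L}^\infty$ with polynomial tame bounds. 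Each inductive step is a fixed number of such operations applied to $g^{\alpha\beta}$, $h^{\alpha\beta}$, and the previously built $\rho^{\alpha_i}$. The delicate bookkeeping is to extract a factor $m$ from the final bound: one must consistently rewrite $\rho^{\alpha_{j+1}} - \mathrm{id}$ as a sum of products where one factor is one of the already-small quantities $g^{\alpha_{j+1}\alpha_i} h^{\alpha_i\alpha_{j+1}} - \mathrm{id}$ (of size $\leq m$ in $\mathrm{L}^\infty$) or $\rho^{\alpha_i} - \mathrm{id}$ (of size $\leq Cm$ by the previous step), while absorbing all derivatives into polynomial expressions of the $\mathrm{W}^{2,2}$ norms of the $g^{ab}$ and $h^{ab}$. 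It is precisely here that the $C^0$ hypothesis plays a role beyond plain $\mathrm{W}^{2,2}$: it guarantees that all cochain values remain in the domain of $\exp^{-1}$ throughout the construction, so that the linearization argument that produces the factor $m$ is globally valid.
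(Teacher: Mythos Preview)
Your proposal is correct and follows essentially the same inductive scheme as the paper: start with $\rho^{\alpha_1}=\mathrm{id}$, at each step define $\rho^{\alpha_{j+1}}=g^{\alpha_{j+1}\alpha_i}\rho^{\alpha_i}h^{\alpha_i\alpha_{j+1}}$ on the overlaps (checking consistency via the cocycle conditions), pass to the Lie algebra via $\exp^{-1}$, multiply by a cutoff to extend, and shrink the earlier sets to where the cutoff equals $1$; the $\mathrm{L}^\infty$ bound comes from iterating Lemma~\ref{techlemma} and the $\mathrm{W}^{2,2}$ bound from the tame algebra property of $\mathrm{W}^{2,2}\cap\mathrm{L}^\infty$ applied to $\phi\cdot u$ with $\|u\|_\infty\lesssim m$. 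The only cosmetic difference is that the paper builds the refinements dynamically from a partition of unity (setting $\phi_{k+1}=\sum_{\alpha\leq k}\chi_\alpha$ and $U_{\alpha,k+1}=U_{\alpha,k}\cap\{\phi_{k+1}=1\}$) rather than fixing a nested chain in advance, and simply extends by zero after the cutoff so no separate extension operator is needed.
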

As explain above the proof is exactly the same than the one of Uhlenbeck with some more details on the $\mathrm{W}^{2,2}$-control.
\begin{proof}
	The proof is made inductively on the number of element of the covering for which we assume that $I=\{1,\dots, N\}$ and $m<f_G$ where $f_G$ is given by lemma \ref{techlemma}. We set $U_{1,1}=U_1$ and $\rho_1 : U_1 \rightarrow G$ being $\rho_1 \equiv \mathrm{id}$. Then we assume that we are at step $k$, i.e we have $\{U_{\alpha,k}\}_{\alpha \in \{1, \dots ,k\}}$ such that $U_{\alpha, k} \subset U_\alpha$ and 
	$$ \mathrm{B}^4\subset \bigcup_{\alpha \leq k}U_{\alpha, k} \bigcup_{\alpha >k} U_\alpha, $$
	we have $\rho_\alpha \in   \mathrm{W}^{2,2}\cap C^0(U_{\alpha,k},G)$ such that 
	\begin{equation}
		\label{consist}
	 h^{\alpha\beta}=(\rho^\alpha)^{-1}g^{\alpha\beta}\rho^\beta \text{ on } U_{\alpha, k} \cap U_{\beta, k} \text{ for all } \alpha,\beta \in \{1,\dots,k\},
	\end{equation}
	moreover, there exists $C_k>0$ depending only on $k$ and $P_k$ a polynomial expression depending  only on $G$ and $\{U_\alpha\}_{\alpha\leq k}$ such that, for all $\alpha \leq k$,
	$$ \Vert \rho^{\alpha}-\mathrm{id} \Vert_{\mathrm{L}^\infty (V_\alpha)} \leq C_k m$$
	and 
	\begin{equation}
		\label{Pk}
	 \Vert \rho^{\alpha}-\mathrm{id} \Vert_{\mathrm{W}^{2,2} (V_\alpha)} \leq m P_k( (\Vert g^{ab} \Vert_{\mathrm{W}^{2,2} (U_a\cap U_b)})_{a,b \leq k},(\Vert h^{ab} \Vert_{\mathrm{W}^{2,2} (U_a\cap U_b)})_{a,b \leq k}).
	\end{equation}
	Then, thanks to lemma \ref{techlemma}, assuming $\delta_G< f_G/C_k$, for all $\alpha \in \{1,\dots, k\}$, we can define $u_{k+1}: U_{k+1}\cap U_{\alpha,k}\rightarrow \mathfrak g$ by $ u_{k+1} =\exp^{-1}(g^{k+1,\alpha}\rho^\alpha h^{\alpha,k+1})$. It follows by \eqref{consist} that $u_{k+1}$ is consistently define on $U_{k+1} \cap \cup_{\alpha \leq k} U_{\alpha, k}$ and by lemma \ref{techlemma} we have $ \Vert u_{k+1} \Vert_{\mathrm{L}^\infty} \leq 2(C_k +1)m$. Let $\chi_\alpha$ a partition of unity associated to  $\{U_{\alpha, k}\}_{\alpha \leq k} \cup \{ U_\alpha\}_{\alpha >k}, $ we set $\displaystyle \phi_{k+1}=\sum_{\alpha \leq k} \chi_\alpha$,
	$$U_{\alpha,k+1}=U_{\alpha,k} \cap \text{interior}\{x\, \vert \, \phi_{k+1}=1\}\text{ for } \alpha\leq k$$
	and 
	$$U_{k+1,k+1}=U_{k+1}.$$
	We easily check  that
	$$ \mathrm{B}^4\subset \bigcup_{\alpha \leq k+1}U_{\alpha, k+1} \bigcup_{\alpha >k+1} U_\alpha. $$ 
	We then set $\rho^{k+1}= \exp(\phi_{k+1}u_{k+1})$ on $U_{k+1} \cap \cup_{\alpha \leq k} U_{\alpha, k}$ and $\mathrm{id}$ elsewhere. We clearly have 
	$$h^{\alpha k+1 }= (\rho^{\alpha})^{-1} g^{\alpha k+1} \rho^{k+1} \text{ on } U_{\alpha, k+1} \cap U_{k+1,k+1} \text{ for } \alpha \leq k+1 .$$
	Then there exists $C>0$ depending only on $G$ such that
	$$\Vert \rho_{k+1}-\mathrm{id}\Vert_\infty \leq C\Vert \exp^{-1}(\rho^{k+1})\Vert_\infty\leq C  \Vert u_{k+1} \Vert_{\mathrm{L}^\infty} \leq 2C(C_k +1)m,$$
	hence we can set $C_{k+1}=2C(C_k +1)$. Moreover
	\begin{align*}
		\Vert \rho_{k+1}-\mathrm{id}\Vert_{\mathrm{W}^{2,2}} &\leq C\Vert \exp^{-1}(\rho^{k+1})\Vert_{\mathrm{W}^{2,2}} \leq C \sum_{\alpha\leq k} \Vert \phi_{k+1}\exp^{-1}(g^{k+1,\alpha}\rho^\alpha h^{\alpha,k+1})\Vert_{\mathrm{W}^{2,2}(U_{k+1}\cap U_{\alpha,k} )} \\
		&\leq \Vert u_{k+1}\Vert_\infty P( (\Vert g^{k+1\alpha} \Vert_{\mathrm{W}^{2,2} (U_\alpha\cap U_{k+1})})_{\alpha\leq k},(\Vert h^{\alpha k+1} \Vert_{\mathrm{W}^{2,2} (U_\alpha\cap U_{k+1})})_{\alpha\leq k}, (\Vert \rho^\alpha-\mathrm{id} \Vert_{\mathrm{W}^{2,2}})_{\alpha\leq k}). 
	\end{align*}
	where $P$ is polynomial expression depending only on the $\Vert \phi_{k+1}\Vert_{\mathrm{W}^{2,2}} $ which depends only on the covering $\{U_\alpha\}_{\alpha\in I}$.Hence thanks to \eqref{Pk}, there exists a polynomial $P_{k+1}$ depending only on $G$ and $\{U_\alpha\}_{\alpha\in I}$ such that 
	$$	\Vert \rho_{k+1}-\mathrm{id}\Vert_{\mathrm{W}^{2,2}}  \leq m P_{k+1}( (\Vert g^{ab} \Vert_{\mathrm{W}^{2,2} (U_a\cap U_b)})_{a,b\leq k+1},(\Vert h^{ab} \Vert_{\mathrm{W}^{2,2} (U_a\cap U_b)})_{a,b\leq k+1}),
	$$
	which achieves the proof.
\end{proof}

The key ingredient of the theory of weak-connections is the following result.
\begin{theorem}[Theorem A \cite{PR14}] \label{controlledglobalgauge}
    Let $M$ be a Riemannian $4$-manifold. There exists $f: \R^+ \rightarrow \R^+$ with the following property: Let $\nabla$ be a $\mathrm{W}^{1,2}$-connection over a principal $\mathcal P^{2,2}_G$-bundle over $M$. Then there exists a global $\mathrm{W}^{1,(4,\infty)}$-section of the bundle (possibly allowing singularities) over the whole $M$ such that in the corresponding trivialization $\nabla$ is given by $d+A$ with the bound 
    $$\Vert A\Vert_{\mathrm{L}^{4,\infty}} \leq f(\Vert F\Vert_{\mathrm{L}^2}),$$
    where $F$ is the curvature form of $\nabla$.
\end{theorem}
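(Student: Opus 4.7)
My plan is to build the global section by combining Uhlenbeck's small-energy Coulomb gauge theorem \ref{uhlenbeckgauge} with a dyadic decomposition around the finitely many points where the curvature concentrates, and then to glue the resulting local gauges into a single one via lemma \ref{Ulem}. Write $E = \Vert F\Vert_{\mathrm{L}^2(M)}^2$. Since $F\in \mathrm{L}^2$, the set of points $p$ at which $\liminf_{r\to 0}\int_{\mathrm{B}_r(p)}|F|^2 \geqslant \varepsilon_G/2$ is finite, of cardinality $N \leqslant 2E/\varepsilon_G$; call this set $\Sigma=\{p_1,\ldots,p_N\}$. Away from $\Sigma$ the local energy can be made as small as required by shrinking the radius.

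\textbf{Local construction.} On $M\setminus \bigcup_i \mathrm{B}_\eta(p_i)$, cover by finitely many balls $\{U_\alpha\}$ whose radii are small enough that $\int_{U_\alpha}|F|^2 < \varepsilon_G$. On each $U_\alpha$, in a local $\mathrm{W}^{2,2}$-trivialization of $P$, theorem \ref{uhlenbeckgauge} yields a $\mathrm{W}^{1,(4,\infty)}$-gauge $g^\alpha$ in which $\Vert A^{g^\alpha}\Vert_{\mathrm{W}^{1,2}(U_\alpha)} \leqslant C_G\Vert F\Vert_{\mathrm{L}^2(U_\alpha)}$, hence $\Vert A^{g^\alpha}\Vert_{\mathrm{L}^{4,\infty}(U_\alpha)}\leqslant C_G'\Vert F\Vert_{\mathrm{L}^2(U_\alpha)}$ by $\mathrm{W}^{1,2}\hookrightarrow\mathrm{L}^{4,2}\hookrightarrow\mathrm{L}^{4,\infty}$. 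Near each $p_i$, decompose $\mathrm{B}_\eta(p_i)\setminus\{p_i\}$ into dyadic annuli $\Omega_j = \mathrm{B}_{2^{-j}\eta}(p_i)\setminus\mathrm{B}_{2^{-j-1}\eta}(p_i)$. Because $\sum_j\int_{\Omega_j}|F|^2 \leqslant E$, only finitely many annuli carry energy above $\varepsilon_G$, and on the remaining ones the annular version of Uhlenbeck (applied after rescaling $\Omega_j$ to a fixed annulus, an operation preserving both $\Vert F\Vert_{\mathrm{L}^2}$ and $\Vert A\Vert_{\mathrm{L}^{4,\infty}}$ in dimension $4$) produces Coulomb gauges with the same $\mathrm{W}^{1,2}\!\to\!\mathrm{L}^{4,\infty}$ control. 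The bad annuli are subdivided further in the angular direction until each sub-patch has energy below $\varepsilon_G$.

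\textbf{Gluing.} With local gauges $\{g^\alpha\}$ in hand, I patch them into a single global section by iterated application of lemma \ref{Ulem}. On an overlap $U_\alpha\cap U_\beta$ the transition between two Coulomb gauges for the same connection is $k^{\alpha\beta}=(g^\alpha)^{-1}g^\beta$, and the smallness of $A^{g^\alpha}$ in $\mathrm{W}^{1,2}$ forces $k^{\alpha\beta}$ to be close to the identity in $C^0$ after a sufficiently fine refinement of the cover (e.g.\ via the $C^0$-control on Coulomb gauge transformations stated in theorem V.5 of \cite{rivière2015variations}). Lemma \ref{Ulem} then trivializes the cocycle $\{k^{\alpha\beta}\}$ and produces a single global gauge $\rho$ on $M\setminus\Sigma$ which is $\mathrm{W}^{1,(4,\infty)}$ away from $\Sigma$; the points of $\Sigma$ are precisely the allowed singularities.

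\textbf{Main obstacle.} The crux is to upgrade the local $\mathrm{L}^{4,\infty}$ estimates to a global bound that depends only on $\Vert F\Vert_{\mathrm{L}^2}$. Two features conspire to make this delicate: $\mathrm{L}^{4,\infty}$ is not Banach under its natural quasinorm, and the gluing modifications $\rho^{-1}d\rho$ from lemma \ref{Ulem} add new contributions on every overlap. The saving grace is the conformal/dilational invariance of $\mathrm{L}^{4,\infty}$ in dimension $4$, which prevents the dyadic annular contributions near each $p_i$ from accumulating uncontrollably under the quasi-triangle inequality $\Vert f+g\Vert_{\mathrm{L}^{4,\infty}}\leqslant 2(\Vert f\Vert_{\mathrm{L}^{4,\infty}}+\Vert g\Vert_{\mathrm{L}^{4,\infty}})$. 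However the number $N$ of bubble points, the number of bad dyadic levels per point, and the combinatorial depth of the gluing all grow with $E$, so the total estimate is necessarily nonlinear in $\Vert F\Vert_{\mathrm{L}^2}$; tracking this dependence precisely is what produces the function $f$ in the statement.
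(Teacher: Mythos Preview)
The paper does not prove this theorem. It is stated as Theorem~A of \cite{PR14} (Petrache--Rivi\`ere) and quoted without proof as the ``key ingredient'' underlying the weak-connection formalism in appendix~\ref{weak}. There is therefore nothing in the present paper to compare your attempt against.

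That said, a brief remark on your sketch. The broad architecture---Uhlenbeck Coulomb gauges on small-energy patches, a dyadic decomposition near the finitely many concentration points, and a gluing step---is indeed the shape of the argument in \cite{PR14}, and is also what the paper carries out in miniature in lemma~\ref{globalg} (for a single ball, without the quantitative $\mathrm{L}^{4,\infty}$ bound). But two points in your outline are genuine difficulties rather than routine steps. First, ``subdividing the bad annuli in the angular direction until each sub-patch has energy below $\varepsilon_G$'' does not by itself terminate: nothing prevents the curvature from concentrating at an interior point of an annulus, so angular subdivision alone need not produce small-energy pieces, and one must instead iterate the concentration-point extraction (this is why the dependence $f$ is genuinely nonlinear and the construction in \cite{PR14} is inductive on an energy scale). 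Second, your gluing step assumes that smallness of $A^{g^\alpha}$ in $\mathrm{W}^{1,2}$ forces the transition $k^{\alpha\beta}$ to be $C^0$-close to the identity ``after a sufficiently fine refinement''; this is not automatic from $\mathrm{W}^{1,2}\hookrightarrow\mathrm{L}^4$ and requires the $\mathrm{W}^{2,(2,1)}\hookrightarrow C^0$ bootstrap that the paper spells out around \eqref{W22g}--\eqref{estimg}. Moreover, lemma~\ref{Ulem} as stated applies on a ball (topologically trivial base), so on a general closed $4$-manifold the resulting cocycle need not be a coboundary and the global section is only guaranteed after allowing the singular set $\Sigma$---which is exactly why the theorem speaks of a section ``possibly allowing singularities.'' These are the places where the actual proof in \cite{PR14} does substantial work.
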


For the sake of completeness, we remind the definition of the weak-$\mathrm{L}^p$ space, see section section 1.4 of \cite{Grafakos1} for details. Let $(X,\mu)$ be a measured space, the space $\mathrm{L}^{p,\infty}(X,\mu)$ is the space of all measurable functions $f$ such that $$\Vert f\Vert^p_{\mathrm{L}^{p,\infty}} := \sup_{\lambda >0} \lambda^p \mu (\{x \, \vert \, \vert f(x)\vert >\lambda \})$$
is finite. In particular $\frac{1}{\vert x\vert} \in \mathrm{L}^{4,\infty}(\mathrm{B}^4)\setminus \mathrm{L}^4(\mathrm{B}^4)$.\\ 

Hence to each $\mathrm{W}^{1,2}$-connection we can associate a \textbf{global} one form into $\mathrm{L}^{4,\infty}$. Which leads us to define the space of weak-connections
$$ \mathfrak U_G(M):=\left\{ \begin{array}{l}
A\in \mathrm{L}^{4,\infty}(M,\Lambda^1 T^* M\otimes \mathfrak g) \text{ s.t. } dA+A\wedge A \in \mathrm{L}^2(M,\Lambda^2T^*M\otimes \mathfrak g) \\
    \text{and locally } \exists g \in \mathrm{W}^{1,(4,\infty)} \text{ s.t. } A^g\in \mathrm{W}^{1,2}.
\end{array}\right\}$$
At first glance, it seems weird to get a global object for a connection on a principal bundle which \textit{ a priori } gets some topology. But as we will see in the following example all the topology is encoded in the singularities of the connection.\\

\textbf{ The example of instantons:} If we consider the quaternionic Hopf's bundle over $\mathbb HP^1\cong S^4$ with fiber $Sp(1)\cong \mathrm{SU}(2)$. Then, over $\mathbb H \cong \mathbb HP^1 \setminus \{[0,1]\}$, the connection is given by 
$$ \tilde A=\Im \left( \frac{\bar{q}dq}{1+\vert q\vert^2}\right).$$
Now if we set $A=\pi^*(\tilde A): S^4 \rightarrow \Lambda^1 T^* S^4 \otimes \mathfrak{su} (2)$, where $\pi$ is the stereographic projection with respect to the north pole. It defines a singular connection form with a singularity at north pole like the inverse of the distance to the north pole. Hence $\tilde A \in \mathrm{L}^{4,\infty} (S^4, \Lambda^1 T^* S^4 \otimes \mathfrak{su} (2))$ but not in $\mathrm{L}^4$. Moreover, since,
$$A(q) \underset{+\infty}{\sim}\Im \left( \left( \frac{q}{\vert q\vert }\right)^{-1} \left( d\frac{q}{\vert q\vert }\right)\right).$$
In the chart given by stereographic projection with respect to south pole, $A^g$ is a smooth connection where $g=\frac{q}{\vert q\vert} \in \mathrm{W}^{1,(4,\infty)}(S^4, \mathrm{SU}(2))$.\\

More generally, we can associate to each weak connection a $\mathrm{W}^{1,2}$-connection and vice-versa.
\begin{theorem}[Proposition 1.4 \cite{PETRACHE2017469}] Let $M$ be a compact Riemannian 4-manifold. There exists a surjective geometric realization map
$$\mathfrak R : \mathfrak U_G(M) \rightarrow   \hat{\mathcal P}^{2,2}(M),$$
\end{theorem}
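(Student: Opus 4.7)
The plan is to define $\mathfrak R$ by patching local Coulomb gauges of a weak connection into a $\mathrm{W}^{2,2}$-cocycle, and to obtain surjectivity by reversing the construction via theorem \ref{controlledglobalgauge}.

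First, I would construct $\mathfrak R(A)$ for $A \in \mathfrak U_G(M)$. Using the third bullet in the definition of $\mathfrak U_G(M)$, cover $M$ by a finite good cover $\{U_\alpha\}_{\alpha \in I}$ small enough that on each $U_\alpha$ there is $g_\alpha \in \mathrm{W}^{1,(4,\infty)}(U_\alpha,G)$ with $A_\alpha := A^{g_\alpha} \in \mathrm{W}^{1,2}(U_\alpha)$. Set $g^{\alpha\beta} := g_\alpha^{-1} g_\beta$; the cocycle condition is immediate, and the change-of-gauge formula forces
\[
dg^{\alpha\beta} = g^{\alpha\beta} A_\beta - A_\alpha g^{\alpha\beta} \qquad \text{on } U_\alpha \cap U_\beta.
\]
Since $G$ is compact, $g^{\alpha\beta} \in \mathrm{L}^\infty$; combined with $A_\alpha, A_\beta \in \mathrm{W}^{1,2} \hookrightarrow \mathrm{L}^{4,2}$, the right-hand side lies in $\mathrm{L}^{4,2}$, so $g^{\alpha\beta} \in \mathrm{W}^{1,(4,2)} \hookrightarrow C^0$. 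Differentiating the relation once more and applying the Lorentz-space Hölder inequality $\mathrm{L}^{4,2}\cdot\mathrm{L}^{4,\infty} \hookrightarrow \mathrm{L}^2$ together with $\nabla A_\alpha, \nabla A_\beta \in \mathrm{L}^2$, one obtains $\nabla^2 g^{\alpha\beta} \in \mathrm{L}^2$, hence $g^{\alpha\beta} \in \mathrm{W}^{2,2}(U_\alpha \cap U_\beta, G)$. Therefore $P := (\{U_\alpha\},\{g^{\alpha\beta}\}) \in \mathcal P_G^{2,2}(M)$ and $\{A_\alpha\}\in \mathcal A_G^{1,2}(P)$, and we set $\mathfrak R(A) := [(P,\{A_\alpha\})]_{2,2}$.

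To see that this descends to $\hat{\mathcal P}^{2,2}(M)$, suppose $\{\tilde g_\alpha\}$ is another family of local $\mathrm{W}^{1,(4,\infty)}$-gauges with $A^{\tilde g_\alpha}\in\mathrm{W}^{1,2}$. The change $k^\alpha := g_\alpha^{-1}\tilde g_\alpha \in \mathrm{W}^{1,(4,\infty)}$ satisfies $dk^\alpha = k^\alpha A^{\tilde g_\alpha} - A^{g_\alpha} k^\alpha$ and the same two-step bootstrap yields $k^\alpha \in \mathrm{W}^{2,2}(U_\alpha,G)$. After passing to a common refinement, the two resulting pairs are then $\mathrm{W}^{2,2}$-equivalent. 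For surjectivity, pick a representative $(P,A) = (\{U_\alpha\},\{g^{\alpha\beta}\},\{A_\alpha\})$ of a class in $\hat{\mathcal P}^{2,2}(M)$. Theorem \ref{controlledglobalgauge} produces a global (possibly singular) $\mathrm{W}^{1,(4,\infty)}$-section $\sigma$ of $P$ in which the connection is written $d+\tilde A$ with $\tilde A \in \mathrm{L}^{4,\infty}(M, T^*M\otimes\mathfrak g)$ and $F_{\tilde A} = d\tilde A + \tilde A \wedge \tilde A \in \mathrm{L}^2$. On each $U_\alpha$, the gauge change from $\sigma$ to the trivialization defining $A_\alpha$ is a $\mathrm{W}^{1,(4,\infty)}$-map $h_\alpha$ with $\tilde A^{h_\alpha} = A_\alpha \in \mathrm{W}^{1,2}$, so $\tilde A \in \mathfrak U_G(M)$. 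Applying the construction of $\mathfrak R$ with exactly these gauges $h_\alpha$ gives back a representative $\mathrm{W}^{2,2}$-equivalent to $(P,A)$, so $\mathfrak R(\tilde A) = [(P,A)]_{2,2}$.

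The main obstacle is the $\mathrm{W}^{2,2}$-promotion of the transition functions from the naive $\mathrm{W}^{1,(4,\infty)}$-regularity; this is not standard elliptic bootstrap, as $g^{\alpha\beta}$ satisfies only the first-order relation above, and the estimate depends critically on the improved Sobolev embedding $\mathrm{W}^{1,2} \hookrightarrow \mathrm{L}^{4,2}$ and the Lorentz duality $\mathrm{L}^{4,2}\cdot\mathrm{L}^{4,\infty} \hookrightarrow \mathrm{L}^2$. A secondary technical point is ensuring that when theorem \ref{controlledglobalgauge} provides the global section with singularities, the local $\mathrm{W}^{1,(4,\infty)}$-gauges reconstructing the original trivialization are genuinely admissible in the sense of the definition of $\mathfrak U_G(M)$; this follows because the singular set of $\sigma$ has vanishing $2$-capacity, so it does not obstruct the local chart property.
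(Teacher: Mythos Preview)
The paper does not give a self-contained proof of this statement---it is quoted from \cite{PETRACHE2017469}---but the construction of $\mathfrak R$ is sketched in the surrounding text and carried out in detail inside the proof of lemma~\ref{globalg}. Your proposal follows exactly that construction: local gauges $g_\alpha$ with $A^{g_\alpha}\in\mathrm{W}^{1,2}$, transition functions $g^{\alpha\beta}=g_\alpha^{-1}g_\beta$ satisfying $dg^{\alpha\beta}=g^{\alpha\beta}A_\beta-A_\alpha g^{\alpha\beta}$, and a bootstrap to $g^{\alpha\beta}\in\mathrm{W}^{2,2}$. Surjectivity via theorem~\ref{controlledglobalgauge} is also the intended mechanism. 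So the approach is the same.

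One point to correct: the embedding $\mathrm{W}^{1,(4,2)}\hookrightarrow C^0$ you invoke is \emph{false} in dimension $4$; only $\mathrm{W}^{1,(4,1)}$ embeds continuously into $C^0$. This does not damage your argument, since you never actually use continuity of $g^{\alpha\beta}$---the second differentiation only needs $g^{\alpha\beta}\in\mathrm{L}^\infty$ (from compactness of $G$) and $\nabla g^{\alpha\beta}\in\mathrm{L}^{4,2}$, both of which you already have. The paper, by contrast, insists on \emph{Coulomb} gauges (theorem~\ref{extract}), because then $-\Delta g^{\alpha\beta}=A^{g^\alpha}\cdot dg^{\alpha\beta}-dg^{\alpha\beta}\cdot A^{g^\beta}$ with right-hand side in $\mathrm{L}^{4,2}\cdot\mathrm{L}^{4,2}\hookrightarrow\mathrm{L}^{2,1}$, and Calder\'on--Zygmund gives $\nabla^2 g^{\alpha\beta}\in\mathrm{L}^{2,1}$, hence $g^{\alpha\beta}\in\mathrm{W}^{2,(2,1)}\hookrightarrow C^0$. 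This extra continuity is what the paper uses later (e.g.\ to apply lemma~\ref{Ulem}), but it is not required for the bare statement of the theorem, so your more direct bootstrap with arbitrary $\mathrm{W}^{1,2}$-gauges suffices here.
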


In fact, we can even associated to a weak-connection a $\mathrm{W}^{2,2}\cap C^0$-bundle, see proposition VIII.1 of \cite{rivière2015variations}, choosing the gauge $g_\alpha$ being given by the following version of Uhlenbeck's Coulomb gauge extraction. Indeed, as shown in proof of lemma \ref{globalg}, the $g^{\alpha\beta}(g^\alpha)^{-1}g^\beta$ given by the second part of the following theorem are control in $C^{0}\cap \mathrm{W}^{2,2}.$

\begin{theorem}
\label{extract} Let $A\in \mathfrak U_G(M)$ there exists $\epsilon_G>0$ and $C>0$ such that if $U$ is an open set diffeomorphic to a ball on which there exists $g \in \mathrm{W}^{1,(4,\infty)}(U,G)$ such that  $A^g\in \mathrm{W}^{1,2}(U,\mathfrak g)$ and 
$$\Vert (dA+A\wedge A) \Vert_{\mathrm{L}^2(U)} \leq \epsilon_G $$
then there exists $g' \in \mathrm{W}^{1,(4,\infty)}(U,G)$ such that 
$$\diff^* A^{g'}=0$$
and
$$\Vert A^{g'}\Vert_{\mathrm{W}^{1,2}(U)} \leq C \Vert (dA+A\wedge A)\Vert_{\mathrm{L}^2(U)}.$$
In particular, there exists a good cover $\{ U_\alpha\}_{\alpha\in I}$ of $M$ and $g_\alpha \in \mathrm{W}^{1,(4,\infty)}(U_\alpha,G)$ such that 
$$\diff^* A^{g_\alpha}=0$$
and
$$\Vert A^{g_\alpha}\Vert_{\mathrm{W}^{1,2}(U_\alpha)} \leq C \Vert (dA+A\wedge A) \Vert_{\mathrm{L}^2(U_\alpha)}.$$
\end{theorem}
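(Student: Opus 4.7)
The plan is to reduce both assertions to Theorem \ref{uhlenbeckgauge}, which is already stated (or extends) to any ball of $(M^4,h)$. The key observation is that the hypothesis explicitly provides a $\mathrm{W}^{1,(4,\infty)}$-gauge $g$ in which $A^g$ is a \emph{bona fide} $\mathrm{W}^{1,2}$-connection on $U$. By the pointwise gauge invariance of the curvature one has $\|F_{A^g}\|_{\mathrm{L}^2(U)} = \|F_A\|_{\mathrm{L}^2(U)} \leq \epsilon_G$, so after transporting the problem to the standard unit ball via the diffeomorphism $U \simeq \mathrm{B}^4$ (a bi-Lipschitz change of variables that preserves, up to multiplicative constants, the $\mathrm{L}^2$-norm of the curvature, the $\mathrm{W}^{1,2}$-norm of a $1$-form, and the class $\mathrm{W}^{1,(4,\infty)}$), Theorem \ref{uhlenbeckgauge} delivers a gauge $h \in \mathrm{W}^{1,(4,\infty)}(U,G)$ such that $(A^g)^h = A^{gh}$ is Coulomb with the $\mathrm{W}^{1,2}$-control $\|A^{gh}\|_{\mathrm{W}^{1,2}(U)} \leq C\,\|F_A\|_{\mathrm{L}^2(U)}$. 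Setting $g' := gh$ gives the first assertion.

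The only point that requires a check is that $g' \in \mathrm{W}^{1,(4,\infty)}(U,G)$. Since $G$ is compact, $\|g\|_{\mathrm{L}^\infty}$ and $\|h\|_{\mathrm{L}^\infty}$ are universally bounded, and from the Leibniz rule $d(gh) = (dg)\,h + g\,(dh)$ together with the continuous multiplication $\mathrm{L}^{4,\infty}\cdot \mathrm{L}^\infty \hookrightarrow \mathrm{L}^{4,\infty}$, we get $d(gh) \in \mathrm{L}^{4,\infty}(U)$, hence $g' \in \mathrm{W}^{1,(4,\infty)}(U,G)$. This is really the whole point of working in the class $\mathrm{W}^{1,(4,\infty)}$ in the first place: it is stable under composition of gauges, whereas composition with a merely $\mathrm{W}^{2,2}$-gauge would in general leave this class.

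For the second assertion, the plan is to build a good cover on which the hypothesis of the first part is valid uniformly. Two ingredients are combined:
\begin{enumerate}
  \item Absolute continuity of the finite measure $|F_A|^2\,\mathrm{vol}_h$: given $\epsilon_G > 0$, there exists $r_0>0$ such that $\|F_A\|_{\mathrm{L}^2(\mathrm{B}_r(x))} \leq \epsilon_G$ for every $x \in M$ and every $r \leq r_0$.
  \item The very definition of $A \in \mathfrak U_G(M)$: around every point $x \in M$ there is a neighborhood $V_x$ and a local gauge $g_x \in \mathrm{W}^{1,(4,\infty)}(V_x,G)$ with $A^{g_x} \in \mathrm{W}^{1,2}(V_x,\mathfrak g)$.
\end{enumerate}
Shrinking each $V_x$ if necessary to a geodesic ball of radius at most $r_0$, and extracting a finite subcover of $M$ that forms a good cover in the classical sense (all finite intersections diffeomorphic to a ball, which one obtains from a Riemannian convex-neighborhood argument), the first part applies on each $U_\alpha$ and yields the Coulomb gauges $g_\alpha$ with the claimed estimates.

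The main (minor) obstacle is bookkeeping: keeping track that the Sobolev class $\mathrm{W}^{1,(4,\infty)}$ is preserved under composition with the gauge produced by Uhlenbeck's theorem, and that the constants coming from the diffeomorphism $U \simeq \mathrm{B}^4$ can be absorbed into the universal constant $C$ (exploiting that the $\mathcal{YM}$-energy is conformally invariant in dimension $4$ and that the ambient norms behave controllably under bi-Lipschitz changes of coordinates). There is no genuine analytic difficulty beyond Theorem \ref{uhlenbeckgauge} itself; the statement of Theorem \ref{extract} is a packaging of that result adapted to the bundle-free framework used throughout the paper.
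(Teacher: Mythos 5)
Your proposal is correct and follows essentially the same route as the paper: apply the classical Coulomb gauge extraction for $\mathrm{W}^{1,2}$-connections (theorem IV.1 of Rivi\`ere, i.e.\ the content of theorem \ref{uhlenbeckgauge}) to $A^g$, using gauge invariance of the curvature, and obtain the cover statement from the definition of $\mathfrak U_G(M)$ together with a refinement making the local energy smaller than $\epsilon_G$. The only additions are bookkeeping the paper leaves implicit (stability of $\mathrm{W}^{1,(4,\infty)}$ under the composition $g'=gh$, transport to the unit ball); note only that your aside claiming composition with a $\mathrm{W}^{2,2}$-gauge would leave the class is inessential and in fact not quite right, since $\mathrm{W}^{2,2}(U,G)\hookrightarrow \mathrm{W}^{1,4}\subset \mathrm{W}^{1,(4,\infty)}$ in dimension $4$.
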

\begin{remark}
As shown below, the proof consists into applying the classical result on $\mathrm{W}^{1,2}$-connection locally to $A^g\in \mathrm{W}^{1,2}$. More generally any result with is true for $\mathrm{W}^{1,2}$-connection up to $\mathrm{W}^{2,2}$-gauge is true for weak-connections up to $\mathrm{W}^{1,(4,\infty)}$-gauge.
\end{remark}
\begin{proof}
For the first part of the theorem it suffices to apply the Coulomb gauge extraction on $\mathrm{W}^{1,2}$-connection to $A^g$, see theorem IV.1 of \cite{rivière2015variations}. For the second,  since by definition of the weak connection we can find some good cover $\{ U_\alpha\}_{\alpha\in I}$ of $M$ and $g_\alpha \in \mathrm{W}^{1,(4,\infty)}(U_\alpha,G)$ such that $ A^{g_\alpha}\in \mathrm{W}^{1,2}$. Then up to take a refinement we can also assume that $\Vert F \Vert_{\mathrm{L}^2(U_\alpha)}\leq \epsilon_G$, where $\epsilon_G$ is the one of the Coulomb gauge extraction for $\mathrm{W}^{1,2}$-connection. Then applying the first part of the theorem we obtain the desired gauges.
\end{proof}

Thanks to lemma \ref{Ulem}, we are in fact able to build a global gauge for any weak-connection on a ball without the assumption that the curvature is small. However we can't get a uniform control but we prove that if the curvature is small on a dyadic annulus then we recover uniform estimate on this annulus.

\begin{lemma}	 Let $A\in \mathfrak U_G(B)$ then there exists $g\in \mathrm{W}^{1,(4,\infty)}(\mathrm{B}_\frac{3}{4},G)$ such that 	$$A^g \in \mathcal A^{1,2}_G(\mathrm{B}_\frac{1}{2}).$$
	Moreover there exists $\epsilon_G'>0$ depending only on $G$, such that  if there exists $0<r<1/2$ such that
	$$ \Vert F_A\Vert_{\mathrm{L}^2(\mathrm{B}_{2r}\setminus \mathrm{B}_{r/4})} \leq \epsilon_G',$$
	then the gauge can be chosen such that
	$$\Vert A^g \Vert_{\mathrm{W}^{1,2}(\mathrm{B}_{r}\setminus \mathrm{B}_{r/2}))} \leq C  \Vert F_A\Vert_{\mathrm{L}^2(\mathrm{B}_{2r}\setminus \mathrm{B}_{r/4})}$$
	with $C>0$ depending only on $G$. \label{globalg}
\end{lemma}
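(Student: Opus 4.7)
\medskip

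\noindent\textbf{Proof proposal.} The plan is to patch together local Coulomb gauges from theorem \ref{extract} into a single global gauge using the cocycle-straightening lemma \ref{Ulem}. Since by definition of $\mathfrak{U}_G$ we already have local $\mathrm{W}^{1,(4,\infty)}$-gauges in which $A$ is $\mathrm{W}^{1,2}$, the $\mathrm{L}^2$-curvature $F_A$ is a well-defined $\mathrm{L}^2$-form on $B$. By absolute continuity of the integral, I may cover $\overline{\mathrm{B}}_{3/4}$ by finitely many geodesic balls $U_\alpha$ that form a good cover of $\mathrm{B}_{3/4}$ and on which $\Vert F_A\Vert_{\mathrm{L}^2(U_\alpha)}\le \epsilon_G$, the threshold of theorem \ref{extract}.

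On each $U_\alpha$ theorem \ref{extract} produces a Coulomb gauge $g_\alpha\in \mathrm{W}^{1,(4,\infty)}(U_\alpha,G)$ with $A^{g_\alpha}\in \mathrm{W}^{1,2}(U_\alpha)$ and $\diff^* A^{g_\alpha}=0$. On $U_\alpha\cap U_\beta$, I consider the transition functions $g^{\alpha\beta}=g_\alpha^{-1} g_\beta$. From the gauge change formula $\diff g^{\alpha\beta}=g^{\alpha\beta} A^{g_\beta}-A^{g_\alpha}g^{\alpha\beta}$ together with the fact that Coulomb gauges satisfy elliptic systems of the type $\Delta g^{\alpha\beta}=\diff(g^{\alpha\beta}A^{g_\beta})-\diff(A^{g_\alpha}g^{\alpha\beta})\in \mathrm{L}^2$, a standard elliptic bootstrap (exactly the one behind theorem V.5 of \cite{rivière2015variations}) yields $g^{\alpha\beta}\in \mathrm{W}^{2,2}\cap C^0(U_\alpha\cap U_\beta,G)$, with a bound depending only on $\Vert A^{g_\alpha}\Vert_{\mathrm{W}^{1,2}}+\Vert A^{g_\beta}\Vert_{\mathrm{W}^{1,2}}$.

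Next I apply lemma \ref{Ulem} to the cocycle $\{g^{\alpha\beta}\}$ and to the trivial cocycle $\{h^{\alpha\beta}=\mathrm{id}\}$. The hypothesis of the lemma is $L^\infty$-closeness to the trivial cocycle; to arrange this I first shrink each $U_\alpha$: since the $g^{\alpha\beta}$ are continuous with $g^{\alpha\alpha}=\mathrm{id}$, passing to a finite open refinement $V_\alpha\Subset U_\alpha$ of $\mathrm{B}_{1/2}$ whose mesh is small enough ensures $\max_{\alpha,\beta}\Vert g^{\alpha\beta}-\mathrm{id}\Vert_{L^\infty(V_\alpha\cap V_\beta)}<\delta_G$. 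Lemma \ref{Ulem} then yields $\rho^\alpha\in \mathrm{W}^{2,2}\cap C^0(V_\alpha,G)$ with $g^{\alpha\beta}=\rho^\alpha(\rho^\beta)^{-1}$, and consequently the local gauges $g_\alpha\rho^\alpha$ glue into a single $g\in \mathrm{W}^{1,(4,\infty)}(\mathrm{B}_{1/2},G)$. Writing $A^g=(\rho^\alpha)^{-1}A^{g_\alpha}\rho^\alpha+(\rho^\alpha)^{-1}\diff\rho^\alpha$ on $V_\alpha$ and using $\rho^\alpha\in \mathrm{W}^{2,2}\cap C^0$ together with $A^{g_\alpha}\in \mathrm{W}^{1,2}$ and Sobolev multiplication in dimension $4$ gives $A^g\in \mathrm{W}^{1,2}(\mathrm{B}_{1/2})$, proving the first part.

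For the quantitative statement, assume $\Vert F_A\Vert_{\mathrm{L}^2(\mathrm{B}_{2r}\setminus \mathrm{B}_{r/4})}\le \epsilon_G'$. I cover the dyadic annulus $\mathrm{B}_{2r}\setminus \mathrm{B}_{r/4}$ by a \emph{scale-invariant} number of balls $U_\alpha$ (say, the images of a fixed cover of $\mathrm{B}_{2}\setminus \mathrm{B}_{1/4}$ under the dilation $x\mapsto rx$), run the same construction on this smaller family, and track the dependence: theorem \ref{extract} gives $\Vert A^{g_\alpha}\Vert_{\mathrm{W}^{1,2}(U_\alpha)}\le C\Vert F_A\Vert_{\mathrm{L}^2(U_\alpha)}\le C\epsilon_G'$, hence the elliptic bootstrap gives $\Vert g^{\alpha\beta}-\mathrm{id}\Vert_{\mathrm{W}^{2,2}\cap L^\infty}\le C\Vert F_A\Vert_{\mathrm{L}^2(\mathrm{B}_{2r}\setminus \mathrm{B}_{r/4})}$, so that for $\epsilon_G'$ small enough the smallness assumption of lemma \ref{Ulem} is automatically satisfied and the polynomial estimate there yields $\Vert \rho^\alpha-\mathrm{id}\Vert_{\mathrm{W}^{2,2}}\le C\Vert F_A\Vert_{\mathrm{L}^2(\mathrm{B}_{2r}\setminus \mathrm{B}_{r/4})}$. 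Combining these bounds in the gauge-change formula delivers the announced $\mathrm{W}^{1,2}$-estimate on $\mathrm{B}_r\setminus \mathrm{B}_{r/2}$, with $C$ depending only on $G$ thanks to the scale invariance of the cover. The main obstacle I anticipate is the bookkeeping that the cocycle straightening can indeed be performed with bounds independent of $r$, which relies on doing the whole construction at the rescaled unit scale before dilating back.
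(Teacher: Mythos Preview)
Your overall architecture matches the paper's: local Coulomb gauges from theorem \ref{extract}, elliptic regularity for the transitions $g^{\alpha\beta}=(g_\alpha)^{-1}g_\beta$ to land in $\mathrm{W}^{2,2}\cap C^0$, and then lemma \ref{Ulem} to straighten the cocycle. But there is a genuine gap at the key step. You claim that by refining the cover one can force $\Vert g^{\alpha\beta}-\mathrm{id}\Vert_{L^\infty}<\delta_G$, appealing to continuity and $g^{\alpha\alpha}=\mathrm{id}$. This is false: for $\alpha\neq\beta$ the transition $g^{\alpha\beta}$ has no reason whatsoever to be close to the identity, no matter how small the overlaps are; the Coulomb gauge on each ball is only determined up to a global constant in $G$, and those constants can differ arbitrarily from ball to ball. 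What continuity (plus the $\mathrm{W}^{2,(2,1)}\hookrightarrow C^0$ estimate the paper derives) does give you is that each $g^{\alpha\beta}$ is $L^\infty$-close to some \emph{constant} $\bar g^{\alpha\beta}\in G$, with closeness controlled by $\Vert F_A\Vert_{\mathrm{L}^2}$. The paper then arranges the $\bar g^{\alpha\beta}$ to form a genuine cocycle, uses the contractibility of the ball to write $\bar g^{\alpha\beta}=(\bar\rho^\alpha)^{-1}\bar\rho^\beta$, and only then applies lemma \ref{Ulem} to the pair $(g^{\alpha\beta},\bar g^{\alpha\beta})$. Without this intermediate ``constant cocycle'' step, the smallness hypothesis of lemma \ref{Ulem} cannot be met.

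For the quantitative part, your instinct to use a scale-invariant cover of the dyadic annulus is correct, but note that the statement asks for a \emph{global} gauge on $\mathrm{B}_{3/4}$ (with $A^g\in\mathrm{W}^{1,2}$ everywhere) that additionally obeys the estimate on the annulus; running the construction only on the annulus does not suffice. The paper handles this by placing the $N_0$ annulus balls \emph{first} in the enumeration and then completing the cover; since the polynomial bound in lemma \ref{Ulem} on $\rho^\alpha$ depends only on the data of the balls with index $\le\alpha$, the $\rho^\alpha$ for $\alpha\le N_0$ are controlled uniformly in $r$, which is exactly the bookkeeping you anticipated.
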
 
The begining of the proof is inspired by the proof of theorem V.5 of \cite{rivière2015variations}.

\begin{proof}
	We start by covering $\mathrm{B}_\frac{3}{4}$ by balls $(B(x_\alpha,r_\alpha))_{\alpha\in\{1,\dots,N\}}$ such that we have 
	$$\int_{B(x_\alpha,r_\alpha)} \vert F_A\vert^2_h \, \mathrm{vol}_h <\epsilon<\epsilon_G,$$
	where $\epsilon_G$ is the one given by theorem \ref{extract} and $\epsilon$ will be fixed later but will only depend on $G$. Hence for each $\alpha$ there exists $g^\alpha\in \mathrm{W}^{1,(4,\infty)}(B(x_\alpha,r_\alpha),G)$ such that $A^{g^\alpha}\in \mathrm{W}^{1,2}(B(x_\alpha,r_\alpha),\mathfrak g)$ and
	\begin{equation}
		\label{CoulombW12}
		\Vert A^{g^\alpha}\Vert_{\mathrm{W}^{1,2}(B(x_\alpha,r_\alpha))} \leq C_G \Vert F_A\Vert_{\mathrm{L}^2(B(x_\alpha,r_\alpha))}.
	\end{equation}
	
	Let $g^{\alpha\beta}=(g^\alpha)^{-1}g_\beta $ on $B(x_\alpha,r_\alpha) \cap B(x_\beta,r_\beta)$, then it satisfies
	$$d g^{\alpha\beta}=A^{g^\alpha}g^{\alpha\beta}-g^{\alpha\beta}A^{g^\beta},$$
	hence $g^{\alpha\beta}$ is controlled in $\mathrm{W}^{2,2}$, moreover, since $d* A^{g^{\alpha}}=0$ for all $\alpha$, we have
	$$-\Delta_h g^{\alpha\beta}=A^{g^\alpha}. dg^{\alpha\beta}-dg^{\alpha\beta}.A^{g^\beta}.$$
	Using the injection $\mathrm{W}^{1,2}.\mathrm{W}^{1,2}\hookrightarrow \mathrm{L}^{4,2}.\mathrm{L}^{4,2}\hookrightarrow \mathrm{L}^{2,1}$ and Calderon-Zygmund theory, we obtain 
	\begin{equation}
		\label{W22g}
		\Vert\nabla^2 g^{\alpha\beta}\Vert_{\mathrm{L}^{2,1}(B(x_\alpha,\frac{3}{4}r_\alpha)\cap B(x_\beta,\frac{3}{4}r_\beta))} \leq C \Vert F_A\Vert_{\mathrm{L}^2(B(x_\alpha,r_\alpha)\cup B(x_\beta,r_\beta))}.
	\end{equation}
	Hence, using the injection\footnote{It is a consequence, for $n=4$, of the Peetre-Sobolev embedding $\mathrm{W}^{2,(n/2,1)} \hookrightarrow \mathrm{W}^{1,(n,1)}$, see XX, combined with the Stein embedding  $\mathrm{W}^{1,(n,1)}\hookrightarrow C^0$, see \cite{Stein}.}\  $\mathrm{W}^{2,(2,1)} \hookrightarrow C^0$, we get that $g^{\alpha\beta}\in C^0(B(x_\alpha,\frac{3}{4}r_\alpha)\cap B(x_\beta,\frac{3}{4}r_\beta))$ and there exists $\overline{g}^{\alpha\beta}\in G$ such that 
	\begin{equation}
		\label{estimg}
		\Vert g^{\alpha\beta}-\overline{g}^{\alpha\beta}\Vert_{\mathrm{L}^\infty (B(x_\alpha,\frac{3}{4}r_\alpha)\cap B(x_\beta,\frac{3}{4}r_\beta))} \leq C \Vert F_A\Vert_{\mathrm{L}^2(B(x_\alpha,r_\alpha)\cup B(x_\beta,r_\beta))}.
	\end{equation}
	Moreover, we can choose the $\overline{g}^{\alpha\beta}$ such as they satisfy the cocyle condition. Hence since the ball is topologically trivial, the $\overline{g}^{\alpha\beta}$  defines a trivial \v{C}ech constant co-chain for the presheaf of $G$-valued constant function. Then, see section 10 of Chapter II of \cite{BottTu}, there exists $\overline{\rho}^\alpha \in G$ such that
	$$\overline{g}^{\alpha\beta}=(\overline{\rho}^\alpha)^{-1}\overline{\rho}^\beta \text{ for all } \alpha,\beta \in I.$$ 
	There exists $\epsilon=\epsilon_G'$ such that, thanks to \eqref{estimg}, we can apply lemma \ref{Ulem} to $g^{\alpha\beta}$ and $\overline{g}^{\alpha\beta}$ , hence there exists a refinement $\{V_\alpha\}$ of $\{U_\alpha\}$ and $\rho^\alpha \in \mathrm{W}^{2,2}\cap C^0(V_\alpha, G)$ such that 
	$$g^{\alpha\beta}=(\rho^\alpha)^{-1}\overline{g}^{\alpha\beta}\rho^\beta \text{ on } V_\alpha \cap V_\beta$$
	and
	$$ \Vert \rho^{\alpha} -\mathrm{id} \Vert_{\mathrm{L}^\infty (V_\alpha)} \leq C \sup_{\alpha,\beta}\Vert F_A\Vert_{\mathrm{L}^2(B(x_\alpha,r_\alpha)\cup B(x_\beta,r_\beta))}$$
	and 
	\begin{equation}
		\label{rhoW22}
		\Vert \rho^{\alpha}-\mathrm{id} \Vert_{\mathrm{W}^{2,2} (V_\alpha)} \leq  P_\alpha \sup_{\alpha,\beta} \Vert F_A\Vert_{\mathrm{L}^2(B(x_\alpha,r_\alpha)\cup B(x_\beta,r_\beta))}. 
	\end{equation}
	where $P_\alpha$ is a polynomial expression in $(\Vert g^{ab} \Vert_{\mathrm{W}^{2,2} (U_a\cap U_b)})_{a,b\leq \alpha})$ and $(\Vert h^{ab} \Vert_{\mathrm{W}^{2,2} (U_a\cap U_b)})_{a,b\leq \alpha})$ whose coefficient depends only on $\{U_a\}_{a\leq \alpha}$.
	Hence  we set $\tilde{g}^\alpha=g^\alpha(\tilde{\rho}^\alpha)^{-1}$ with $\tilde{\rho}^\alpha=\overline{\rho}^\alpha\rho^\alpha$ and we easily check that it defines an element of $\tilde g\in \mathrm{W}^{1,(4,\infty)}(\mathrm{B}_\frac{3}{4})$ such that 
	$$A^{\tilde g}=(A^{g^\alpha})^{(\tilde \rho^\alpha)^{-1}}=\tilde \rho^\alpha d(\tilde \rho^\alpha)^{-1}+\tilde\rho^\alpha A^{g^\alpha}(\tilde \rho^\alpha)^{-1} \text{ on }V_\alpha.$$
	Then thanks to the estimates on $\rho^\alpha$ we have the first conclusion.\\ 
	To get the second assertion, we choose $\epsilon_G' >0$ as above and we start  by covering $(\mathrm{B}_{3r/2}\setminus \mathrm{B}_{r/3})$ by  $(B(x_\alpha,r_\alpha))_{\alpha\in\{1,\dots,N_0\}}$ such that we have 
	\begin{equation}
		\label{Fcontrol}
		\int_{B(x_\alpha,r_\alpha)} \vert F_A\vert^2_h \, \mathrm{vol}_h <\epsilon_G'.
	\end{equation}
	Since the region  $(\mathrm{B}_{3r/2}\setminus \mathrm{B}_{r/3})$ is dyadic we have a universal control on $N_0$ the number of balls covering  $(\mathrm{B}_{3r/2}\setminus \mathrm{B}_{r/3})$. Then we complete the cover by balls disjoint to  $(\mathrm{B}_{3r/2}\setminus \mathrm{B}_{r/3})$ and we proceed exactly as in the proof of the first assertion. It is very important to note that the covering $(B(x_\alpha,r_\alpha))_{\alpha\in\{1,\dots,N_0\}}$ is universal so are the coefficient of $P_\alpha$ for $\alpha\leq N_0$. Indeed, if we look to the proof of lemma \ref{Ulem} the coefficient will only depend on the $\mathrm{W}^{2,2}$ norm of cut-off functions on ball which can be uniformly controlled since the $\mathrm{W}^{2,2}$ norm is invariant  by scaling in dimension $4$. Moreover, thanks to \eqref{W22g} and \eqref{Fcontrol}, each variable of $P_\alpha$ is assume to be control by 
	a constant depending only on $G$, then $P_\alpha$ can be replace by a constant depending only on $G$ for $\alpha \leq N_0$. Thanks to the control of the number of balls, \eqref{CoulombW12}, \eqref{rhoW22} and the previous remark we obtain the desired conclusion.
\end{proof}

We can now  define the following notion of convergence which is compatible with the notion of weak-connection. 
\begin{propdef}
    A sequence $A_k \in  \mathfrak U_G(M)$ converges into $\mathfrak U_G(M)$  to $A_\infty \in \mathfrak U_G(M)$ if there exists a good cover $\{ U_\alpha\}_{\alpha\in I}$ of $M$ and $g^{\alpha}_k \in \mathrm{W}^{1,(4,\infty)}(U_\alpha,G)$ such that 
$$A_k^{ g^{\alpha}_k} \rightarrow A_\alpha^\infty \text{ in } \mathrm{W}^{1,2}(U_\alpha),$$
and $\mathfrak R(A_\infty) =[(P, \{A_\alpha^\infty\})]_{2,2}$.\\
We say that the convergence is weak into $\mathfrak U_G$, if we have only $A_k^{ g^{\alpha}_k} \rightharpoonup A_\alpha^\infty \text{ in } \mathrm{W}^{1,2}(U_\alpha)$.
Finally, we say that  $A_k$  converge to $A$ into $\mathfrak C_G^\infty (M)$, if we have only
$$A_k^{ g^{\alpha}_k} \rightharpoonup A_\alpha^\infty \text{ in } \mathcal{C}^\infty(U_\alpha).$$
Finallly, for $U\subset M$ an open set, we note $\mathfrak C^\infty_{G,\mathrm{loc}} (U)$ if the convergence holds in $\mathfrak C^\infty_{G} (K)$ for any compact set $K\subset U$.  
\end{propdef}
\begin{proof}
First of all the $A_\alpha^\infty$ defined a connection. Indeed, on $U_\alpha \cap U_\beta$ we have
	$$dg^{\alpha\beta}_k=g^{\alpha\beta}_k A_k^{g^{\beta}_k}-A_k^{g^{\alpha}_k} g^{\alpha\beta}_k,$$
	where $g^{\alpha\beta}_k=(g^{\alpha}_k)^{-1} g^{\beta}_k$. Hence $g^{\alpha\beta}_k$, up to a subsequence, weakly converge in $\mathrm{W}^{2,2}$ to $g^{\alpha\beta}_{\infty}$ which define a  $\mathcal P_G^{2,2}$-bundle on which $A_\alpha^\infty$ defines a $\mathrm{W}^{1,2}$-connection.\\
	Then only thing we have to check is that the limit does not depends on the cover and gauges, i.e. the limit weak-connection define a unique $\mathcal P^{2,2}$-bundle with a given $\mathrm{W}^{1,2}$-connection. Indeed, if we another couple  good cover $\{V_\alpha\}$ with gauge $\{h^{\alpha}_k\}$ such that $A_k^{h_k^{\alpha}}$ converges in $\mathrm{W}^{1,2}$ to $B_\alpha^\infty$. Then $h^{\alpha\beta}_k=(h^{\alpha}_k)^{-1} h^{\beta}_k$, up to a subsequence, converge weakly in $\mathrm{W}^{2,2}$  to $h^{\alpha\beta}_{\infty}$ which defines a $\mathcal P^{2,2}$-bundle for which $B_\alpha^\infty$ define a $\mathrm{W}^{1,2}$-connection. Then we have $\{W_\alpha\}$ a good cover which is a refinement of  $\{U_\alpha\}$  and $\{V_\alpha\}$, and gauges still denoted $\{g^{\alpha}_k\} $ and $\{h^{\alpha}_k\} $ which are restrictions of the initial $\{g^{\alpha}_k\} $ and $\{h^{\alpha}_k\} $, such that 
\begin{equation} 
	\label{ghk}
	A_k^{g^{\alpha}_k}=(k^{\alpha}_k)^{-1} dk^{\alpha}_k+(k^{\alpha}_k)^{-1}A_k^{h^{\alpha}_k} k^{\alpha}_k \text{ on } W_\alpha,
\end{equation}	
where $k^{\alpha}_k={(g^{\alpha}_k)}^{-1} h^{\alpha}_k.$ Hence, $k^{\alpha}_k$, up to a subsequence, weakly convergences in $\mathrm{W}^{2,2}$ to $k^{\alpha}_{\infty}$ and 
$$g^{\alpha\beta}_\infty= k^{\alpha}_{\infty} h^{\alpha\beta}_\infty( k^{\beta}_\infty)^{-1} \text{ a.e. on } W_\alpha\cap W_\beta.$$
So the two bundles are $\mathrm{W}^{2,2}$-equivalent, and more over passing to the limit in \eqref{ghk} we see that $A_\alpha^\infty$ and $B_\alpha^\infty$ define the same connection.
\end{proof}
	A natural question is: Is this notion of convergence compatible with the classical one, i.e. $A_k \rightarrow A_\infty$ in $\mathrm{L}^{4,\infty}(M)$? Yes! As seen in the previous proof, $g^{\alpha\beta}_k=(g^{\alpha}_k)^{-1} g^{\beta}_k$ converges in $\mathrm{W}^{2,2}$ to $g^{\alpha\beta}_\infty$ which define a  $\mathcal P_G^{2,2}$-bundle on which $A_\alpha^\infty$ defines a connection.	Moreover, this connection is equal to $\mathfrak{R}(A_\infty)$ since on each $U_\alpha$ we have
	$$dg^{\alpha}_k=g^{\alpha}_k A_k^{g^{\alpha}_k}-A_k g^{\alpha}_k,$$
	then we easily see that, up to a subsequence, $g^{\alpha}_k$  converge weakly to $g_\alpha^\infty$ in $\mathrm{W}^{1,(4,\infty)}$ which gives that $A_\infty^{g^{\alpha}_\infty}=A_\alpha^\infty$ a.e..\\

However, as we will see, due to concentration phenomena, the convergence is sometimes only strong outside a finite set of points. We need to extend the connection through this points, this is the goal of the following point removability result.\\

\begin{theorem}Let $A\in \mathfrak U_G(U\setminus\{p\})$ where $U\subset M$ is an open set diffeomorphic to a ball and $p\in U$, then there exists $g \in \mathrm{W}^{1,(4,\infty)}(U,G)$ such that  $A^g\in \mathrm{W}^{1,2}(U,\mathfrak g)$.
\end{theorem}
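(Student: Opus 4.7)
The plan is to follow Uhlenbeck's classical point-removability strategy adapted to the weak-connection setting and to lemma \ref{globalg}. We may assume $U = \mathrm{B}_1 \subset \mathbb{R}^4$ and $p = 0$. The hypothesis $A \in \mathfrak U_G(\mathrm{B}_1 \setminus \{0\})$ yields $A \in \mathrm{L}^{4,\infty}(\mathrm{B}_1, T^*\mathrm{B}_1 \otimes \mathfrak g)$ and $F_A \in \mathrm{L}^2(\mathrm{B}_1)$, since $\{0\}$ carries no mass. Choose $r_0 > 0$ small enough that $\|F_A\|_{\mathrm{L}^2(\mathrm{B}_{2r_0})} < \epsilon_G'$, with $\epsilon_G'$ as in lemma \ref{globalg}. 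On $\mathrm{B}_1 \setminus \overline{\mathrm{B}}_{r_0/2}$ we already have a $\mathrm{W}^{1,(4,\infty)}$-gauge $g^{\mathrm{out}}$ with $A^{g^{\mathrm{out}}} \in \mathrm{W}^{1,2}$, obtained from the hypothesis or directly from lemma \ref{globalg} applied to a ball disjoint from $0$.

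The core construction is a gauge $g^{\mathrm{in}}$ on $\mathrm{B}_{r_0} \setminus \{0\}$ built from dyadic pieces. For $k \geq 0$ set $\mathcal{A}_k = \mathrm{B}_{2^{-k}r_0} \setminus \mathrm{B}_{2^{-k-1}r_0}$ and its enlargement $\widetilde{\mathcal{A}}_k = \mathrm{B}_{2^{1-k}r_0} \setminus \mathrm{B}_{2^{-k-2}r_0}$. Adapting the proof of lemma \ref{globalg} from balls to annuli---cover $\widetilde{\mathcal{A}}_k$ by a bounded number (independent of $k$) of balls of radius $\sim 2^{-k}r_0$ on each of which $\|F_A\|_{\mathrm{L}^2}$ stays below $\epsilon_G$, extract a Coulomb gauge on each via theorem \ref{extract}, and patch them together via lemma \ref{Ulem}---produces $g^{(k)} \in \mathrm{W}^{1,(4,\infty)}$ defined on a neighborhood of $\mathcal{A}_k$ satisfying
\[
\|A^{g^{(k)}}\|_{\mathrm{W}^{1,2}(\mathcal{A}_k)} \leq C \|F_A\|_{\mathrm{L}^2(\widetilde{\mathcal{A}}_k)},
\]
with $C$ independent of $k$ by the conformal invariance of the construction in dimension $4$.

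To glue the $g^{(k)}$'s across annuli, consider the transitions $h^{(k)} = (g^{(k)})^{-1} g^{(k+1)}$ on $\mathcal{A}_k \cap \mathcal{A}_{k+1}$. In the Coulomb setting these satisfy $-\Delta h^{(k)} = A^{g^{(k)}} \cdot dh^{(k)} - dh^{(k)} \cdot A^{g^{(k+1)}}$, and the Lorentz-space Calder\'{o}n--Zygmund argument used in lemma \ref{globalg} provides $\overline{h}^{(k)} \in G$ with $\|h^{(k)} - \overline{h}^{(k)}\|_{L^\infty} \leq C \|F_A\|_{\mathrm{L}^2(\widetilde{\mathcal{A}}_k \cup \widetilde{\mathcal{A}}_{k+1})}$. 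An inductive right-multiplication of the $g^{(k)}$'s by elements of $G$ trivializes the constant \v{C}ech cocycle $\{\overline{h}^{(k)}\}$ (the nerve of the annular cover is one-dimensional and contractible), after which lemma \ref{Ulem} applied to the whole punctured ball assembles the modified $g^{(k)}$'s into a single $g^{\mathrm{in}} \in \mathrm{W}^{1,(4,\infty)}(\mathrm{B}_{r_0} \setminus \{0\}, G)$.

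Summing the annular estimates,
\[
\|A^{g^{\mathrm{in}}}\|_{\mathrm{W}^{1,2}(\mathrm{B}_{r_0})}^2 \leq C \sum_{k \geq 0} \|F_A\|_{\mathrm{L}^2(\widetilde{\mathcal{A}}_k)}^2 \leq C' \|F_A\|_{\mathrm{L}^2(\mathrm{B}_{2r_0})}^2 < +\infty,
\]
so $A^{g^{\mathrm{in}}}$ extends to a $\mathrm{W}^{1,2}$ one-form on $\mathrm{B}_{r_0}$. From $dg^{\mathrm{in}} = g^{\mathrm{in}} A^{g^{\mathrm{in}}} - A g^{\mathrm{in}}$, boundedness of $g^{\mathrm{in}}$ in $G$, and the embeddings $\mathrm{W}^{1,2} \hookrightarrow \mathrm{L}^{4,2} \hookrightarrow \mathrm{L}^{4,\infty}$, one deduces $g^{\mathrm{in}} \in \mathrm{W}^{1,(4,\infty)}(\mathrm{B}_{r_0})$. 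A final transition-function patching of $g^{\mathrm{in}}$ with $g^{\mathrm{out}}$ on $\mathrm{B}_{r_0} \setminus \overline{\mathrm{B}}_{r_0/2}$ yields the desired $g \in \mathrm{W}^{1,(4,\infty)}(\mathrm{B}_1, G)$ with $A^g \in \mathrm{W}^{1,2}(\mathrm{B}_1)$. The main obstacle is the infinite dyadic patching at the puncture: uniformity of the constants in lemma \ref{Ulem} (depending only on $G$ and the uniformly bounded cardinality of each scale's cover) together with the summability of $\|F_A\|_{\mathrm{L}^2(\widetilde{\mathcal{A}}_k)}^2$ are what promote the family $\{g^{(k)}\}$ to a genuine global $\mathrm{W}^{1,(4,\infty)}$-gauge extending across $0$ rather than only an accumulating sequence of local gauges.
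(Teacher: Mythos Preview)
Your approach is the correct one and coincides with what the paper does, though the paper's own proof is a two-line citation: it observes that $F_A \in \mathrm{L}^2(U)$, shrinks to a ball on which the curvature is below $\epsilon_G$, and then invokes theorem V.6 of \cite{rivière2015variations} for the actual construction. What you have written is precisely an unpacking of that reference---the dyadic Coulomb gauge construction going back to Uhlenbeck.

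One technical point deserves care. You write that ``lemma \ref{Ulem} applied to the whole punctured ball assembles the modified $g^{(k)}$'s into a single $g^{\mathrm{in}}$''. As stated, lemma \ref{Ulem} is for a \emph{finite} cover, and the constant $C$ it produces depends on the cardinality $|I|$; inspecting the proof, one sees $C_{k+1} = 2C(C_k+1)$, which grows geometrically. So a direct application to the countable dyadic cover is not legitimate. The standard remedy---and what is actually done in Rivière's theorem V.6---is to patch \emph{one annulus at a time}: at stage $k$ you have a gauge on $\mathrm{B}_{r_0}\setminus \mathrm{B}_{2^{-k}r_0}$, you glue it with $g^{(k+1)}$ across a single overlap of bounded combinatorics, and the correction $\rho^{(k)}$ satisfies $\|\rho^{(k)} - \mathrm{id}\|_{\mathrm{W}^{2,2}} \leq C\,\|F_A\|_{\mathrm{L}^2(\widetilde{\mathcal A}_k)}$ with $C$ now genuinely independent of $k$. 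The summability of the right-hand sides then gives convergence of the accumulated gauge in $\mathrm{W}^{1,(4,\infty)}$. Your final paragraph shows you are aware of this issue, but the phrasing suggests a single global application of lemma \ref{Ulem}, which should be replaced by the iterative argument.
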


\begin{proof} By hypothesis, we have $dA+A\wedge A \in \mathrm{L}^2(U, \Lambda^2 T^*U\otimes \mathfrak g )$, hence we can take a small ball around $p$ on which the total curvature does not exceed $\epsilon_G$ and then proceed as in theorem V.6 of \cite{rivière2015variations} to find a gauge $g \in \mathrm{W}^{1,(4,\infty)}(U,G)$ such that  $A^g\in \mathrm{W}^{1,2}(U,\mathfrak g)$.
\end{proof}

For this notion of convergence, we have the following weak compactness result. It is a translation in our setting  of theorem VII.2 and VIII.1 of \cite{rivière2015variations}. 

\begin{theorem} Let $A_k\in \mathfrak U_G(M)$ such that
$$ \Vert dA_k + A_k \wedge A_k\Vert_2 \leq C,$$
then there exists $A_\infty \in \mathfrak U_G(M)$ such that, up to a subsequence, $A_k$ weakly converges to $A_\infty$ into   $\mathfrak U_G(M)$. If moreover the connection is Yang-Mills then the convergence holds in $\mathfrak C^\infty_{G,\mathrm{loc}}(M\setminus\{ \text{finitely many points} \}$.  
\end{theorem}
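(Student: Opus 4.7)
The plan is to combine the global $\mathrm{L}^{4,\infty}$-gauge of Theorem \ref{controlledglobalgauge} with a local Coulomb gauge extraction and a concentration-compactness argument. Since $\Vert F_{A_k}\Vert_{\mathrm{L}^2}\leq C$, Theorem \ref{controlledglobalgauge} lets us assume $\Vert A_k\Vert_{\mathrm{L}^{4,\infty}(M)}\leq C'$ up to replacing each $A_k$ by a representative of the same weak-connection. First I would identify the concentration set: fix the $\epsilon_G'$ of Lemma \ref{globalg} and for each $p\in M$ define $\Theta(p):=\limsup_k\Vert F_{A_k}\Vert^2_{\mathrm{L}^2(\mathrm{B}_r(p))}$ for small $r$; by a standard covering argument the set $S:=\{p\in M\mid \forall r>0,\ \limsup_k\Vert F_{A_k}\Vert^2_{\mathrm{L}^2(\mathrm{B}_r(p))}\geq \epsilon_G'/2\}$ is finite (bounded in cardinality by $2C^2/\epsilon_G'$).

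Next, cover $M\setminus S$ by a countable union of balls $U_\alpha$ such that $\Vert F_{A_k}\Vert^2_{\mathrm{L}^2(2U_\alpha)}<\epsilon_G'$ for all $k$ large enough (this is possible by definition of $S$). By Theorem \ref{extract} (or directly the Uhlenbeck gauge Theorem \ref{uhlenbeckgauge}), on each $U_\alpha$ there exists $g_k^\alpha\in\mathrm{W}^{1,(4,\infty)}(U_\alpha,G)$ with $\mathrm{d}^*A_k^{g_k^\alpha}=0$ and $\Vert A_k^{g_k^\alpha}\Vert_{\mathrm{W}^{1,2}(U_\alpha)}\leq C\Vert F_{A_k}\Vert_{\mathrm{L}^2(U_\alpha)}\leq C'$. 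A diagonal extraction yields, up to a subsequence, $A_k^{g_k^\alpha}\rightharpoonup A_\infty^\alpha$ weakly in $\mathrm{W}^{1,2}(U_\alpha)$ for every $\alpha$. The transition maps $g_k^{\alpha\beta}=(g_k^\alpha)^{-1}g_k^\beta$ satisfy the elliptic identities
\[
\mathrm{d}g_k^{\alpha\beta}=A_k^{g_k^\alpha}g_k^{\alpha\beta}-g_k^{\alpha\beta}A_k^{g_k^\beta},\qquad -\Delta g_k^{\alpha\beta}=A_k^{g_k^\alpha}\cdot \mathrm{d}g_k^{\alpha\beta}-\mathrm{d}g_k^{\alpha\beta}\cdot A_k^{g_k^\beta},
\]
(exactly as in the proof of Lemma \ref{globalg}) so they are bounded in $\mathrm{W}^{2,2}$ and, after further extraction, converge weakly in $\mathrm{W}^{2,2}$ and strongly in $C^0$ to a cocycle $g_\infty^{\alpha\beta}$. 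The $A_\infty^\alpha$ then satisfy the correct gauge transformation law, so they define a $\mathrm{W}^{1,2}$-connection on a $\mathrm{W}^{2,2}$ principal bundle over $M\setminus S$. Using the point removability theorem stated just before, this connection extends to an element $A_\infty\in\mathfrak{U}_G(M)$, and by construction $A_k$ converges weakly to $A_\infty$ in $\mathfrak{U}_G(M\setminus S)$; since $S$ is negligible and the $\mathrm{L}^{4,\infty}$-norms are bounded, the weak convergence holds on all of $M$.

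For the second part, assume each $A_k$ is Yang-Mills. On each ball $U_\alpha\subset M\setminus S$ where $\Vert F_{A_k}\Vert^2_{\mathrm{L}^2(2U_\alpha)}<\epsilon_G$ for $k$ large, the $\epsilon$-regularity Theorem \ref{Ereg} gives uniform $C^l$ bounds on $A_k^{g_k^\alpha}$ for every $l$ in a slightly smaller ball, after possibly modifying $g_k^\alpha$. By Arzelà–Ascoli and a diagonal extraction we upgrade the weak $\mathrm{W}^{1,2}$ convergence to strong $\mathcal{C}^\infty$ convergence of $A_k^{g_k^\alpha}$ on compact subsets of $U_\alpha$, while the transition functions, being $C^\infty$-bounded by the same argument, also converge in $\mathcal{C}^\infty$. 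This is exactly the notion $\mathfrak{C}^\infty_{G,\mathrm{loc}}(M\setminus S)$.

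The only delicate step is the point removability at the concentration set $S$: one must ensure that the weak $\mathfrak{U}_G$-limit constructed on $M\setminus S$ actually extends to $M$ with $F_{A_\infty}\in\mathrm{L}^2$. This follows from lower semi-continuity of the $\mathrm{L}^2$-norm of the curvature under weak convergence combined with the point removability theorem of the excerpt; the main technical point is to show that the curvature $F_{A_\infty}$ is globally $\mathrm{L}^2$, so that around each $p\in S$ one can shrink the ball enough to make the local curvature energy smaller than $\epsilon_G$ and apply the removability statement.
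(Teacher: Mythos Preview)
Your argument is correct and follows essentially the same line as the paper's proof: identify a finite concentration set, cover its complement by small-energy balls, apply Uhlenbeck's Coulomb gauge (Theorem \ref{extract}) to get uniform local $\mathrm{W}^{1,2}$ control, run a diagonal extraction, and in the Yang--Mills case upgrade to smooth convergence via the ellipticity of the Coulomb-gauge equation (equivalently, the $\epsilon$-regularity Theorem \ref{Ereg}). The paper's own proof is a four-line sketch of exactly this strategy.

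Two minor remarks. First, your opening appeal to Theorem \ref{controlledglobalgauge} to normalise the global $\mathrm{L}^{4,\infty}$ norm is harmless but unnecessary here: the compactness argument is purely local and runs on the curvature bound alone. Second, you are right to flag the point-removability step as the only place requiring care; the paper's sketch does not make this explicit, but your justification (lower semicontinuity of $\Vert F\Vert_{\mathrm{L}^2}$ under weak convergence, then apply the removability theorem on a small ball around each $p\in S$) is exactly what is needed to land the limit in $\mathfrak U_G(M)$ rather than just $\mathfrak U_G(M\setminus S)$.
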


\begin{proof}
    Let $\epsilon_G >0$ given by Coulomb gauge extraction, then there exists at most finitely many points $\{p_1,\dots, p_N\}$ where the $ \Vert dA_k + A_k \wedge A_k\Vert_2$ concentrates above $\epsilon_G$, then thanks to theorem \ref{extract}, we can cover any compact subset of $M\setminus \{p_1,\dots, p_N\}$ by a good cover where we have on each open set, up to gauge, a $\mathrm{W}^{1,2}$-control on the connection. Then we easily conclude by diagonal argument. In the case of Yang-Mills connection the equation in the Coulomb gauge becomes elliptic and then we have $\mathcal{C}^\infty_{\mathrm{loc}}$ convergence, up to gauge. 
\end{proof}

Finally we reformulate the quantization phenomena in our framework, see theorem I.2 \cite{riviere2002interpolation} and theorem VII.3 \cite{rivière2015variations} or theorem II.7 \cite{memoireGauvrit}.

\begin{theorem}
    Let $ (M,h)$ be a closed $4$-dimensional Riemannian manifold and  $A_k\in\mathfrak U_G(M^4)$ a sequence of Yang-Mills connection with uniformly bounded energy. Then we have, up to a sub-sequence,
    \begin{enumerate}
        \item There exists finitely many points $\{p_1,\dots, p_N\}$ and a Yang-Mills connection $A_\infty\in \mathfrak U_G(M^4)$ such that $A_k$ converges to $A_\infty$ in $\mathfrak C^\infty_{G,\mathrm{loc}}(M\setminus\{p_1,\dots, p_N\}$.
        \item For each $i\in {1, \dots, N}$, there exists $N_i\in \N$ sequences of points $(p_k^{i,j})_k$ converging to $p_i$,  $N_i$ sequences of scalars $(\lambda_k^{i,j})_k$ converging to zero, $N_i$ non-trivial Yang-Mills connections $A_\infty^{i,j} \in\mathfrak U_G(S^4)$, called bubbles, such that,  
        $$(\phi_{k}^{i,j})^*(A_k) \rightarrow \widehat{A}_\infty^{i,j}=\pi_*A_\infty^{i,j}  \text{ into } \mathfrak C^\infty_{G,\mathrm{loc}}(\R^4\setminus S_{i,j}),$$
        where $\phi_{k}^{i,j} (x)= p_k^{i,j} +\lambda _k^{i,j} x$ in local coordinates, $\pi$ is the stereographic projection, and $\displaystyle S_{i,j}=\lim_{k\rightarrow \infty }\left\{ \frac{p_k^{i,l}-p_k^{i,j}}{\lambda_k^{i,j}} \text{ for } l\not=j\right\}$.
        \item Moreover there is no loose of energy, i.e
        $$ \lim_{k\rightarrow +\infty}\int_{M^4} \vert F_{A_k}\vert^2_h\, \mathrm{vol}_h  = \int_{M^4} \vert F_{A_\infty}\vert^2_h\, \mathrm{vol}_h +\sum_{i=1}^N \sum_{j=1}^{N_i} \int_{S^4} \vert F_{A_\infty^{i,j}}\vert^2\, \mathrm{vol}. $$
    \end{enumerate}
\end{theorem}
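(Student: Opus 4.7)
\vspace{0.6cm}

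\noindent \textbf{Proof plan.}

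\noindent \emph{Step 1: Identification of concentration points and weak limit off them.} Let $\epsilon_G>0$ be the universal constant appearing in Theorem \ref{uhlenbeckgauge} and Theorem \ref{Ereg}. I define the concentration set
\[
\Sigma = \Bigl\{ p\in M \;\Bigl|\; \forall r>0,\; \limsup_{k\to+\infty}\int_{B_r(p)}|F_{A_k}|_h^2\,\mathrm{vol}_h \geq \epsilon_G \Bigr\}.
\]
Since $\mathcal{YM}(A_k)$ is uniformly bounded, a covering argument shows $\Sigma=\{p_1,\dots,p_N\}$ is finite, with $N\leq C\sup_k\mathcal{YM}(A_k)/\epsilon_G$. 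On any compact $K\Subset M\setminus\Sigma$, I cover $K$ by balls on which the curvature satisfies the smallness hypothesis of Theorem \ref{uhlenbeckgauge}; Uhlenbeck's theorem yields local Coulomb gauges $g_k^\alpha$ with $A_k^{g_k^\alpha}$ bounded in $\mathrm{W}^{1,2}$. The Yang-Mills equation in the Coulomb gauge is a non-linear elliptic system, so Theorem \ref{Ereg} upgrades this to bounds in $C^l_{\mathrm{loc}}$ for every $l$. Extracting diagonally and using the compatibility of local gauges (change-of-gauge is controlled in $\mathrm{W}^{2,2}\cap C^0$ via the proof of Lemma \ref{globalg}), I get $A_k\to A_\infty$ in $\mathfrak{C}^\infty_{G,\mathrm{loc}}(M\setminus\Sigma)$ for some $A_\infty\in\mathfrak{U}_G(M\setminus\Sigma)$. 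The point removability theorem proved above promotes $A_\infty$ to an element of $\mathfrak{U}_G(M)$, and it is Yang-Mills by passing to the limit in the Euler-Lagrange equation away from $\Sigma$.

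\noindent \emph{Step 2: Extraction of the first bubble at each $p_i$.} Fix $p_i\in\Sigma$. For small $\eta>0$ and large $k$, choose $(p_k^{i,1},\lambda_k^{i,1})$ realizing the maximal concentration in $B_\eta(p_i)$ in the standard way:
\[
\int_{B_{\lambda_k^{i,1}}(p_k^{i,1})}|F_{A_k}|_h^2\,\mathrm{vol}_h \;=\; \sup_{x\in B_\eta(p_i),\,\lambda<\eta/2}\int_{B_\lambda(x)}|F_{A_k}|_h^2\,\mathrm{vol}_h \;=\; \epsilon_G/2,
\]
so that $p_k^{i,1}\to p_i$ and $\lambda_k^{i,1}\to 0$. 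Setting $\phi_k^{i,1}(x)=p_k^{i,1}+\lambda_k^{i,1}x$, the rescaled connections $(\phi_k^{i,1})^*A_k$ have bounded energy on every fixed ball of $\R^4$, and by the choice of scale no concentration occurs on the unit ball: they satisfy the smallness hypothesis of Uhlenbeck on $B_{1/2}(0)$. Applying Step 1 again to the rescaled sequence on $\R^4$ yields a non-trivial Yang-Mills limit $\widehat{A}_\infty^{i,1}$ in $\mathfrak{C}^\infty_{G,\mathrm{loc}}(\R^4\setminus S_{i,1})$, where $S_{i,1}$ is the (finite) concentration set of the rescaled sequence. By conformal invariance and the point removability theorem for isolated singularities, $\widehat{A}_\infty^{i,1}$ extends via stereographic projection to a smooth Yang-Mills connection $A_\infty^{i,1}$ on $S^4$ with energy at least $\epsilon_G/2$.

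\noindent \emph{Step 3: Induction and termination.} If the rescaled sequence still concentrates (i.e.\ $S_{i,1}\neq\emptyset$), I iterate Step 2 at each point of $S_{i,1}$, choosing new scales $\lambda_k^{i,j}\ll\lambda_k^{i,1}$ around points $p_k^{i,j}$. Because every extracted bubble carries at least $\epsilon_G/2$ of energy and the total energy $\mathcal{YM}(A_k)$ is uniformly bounded, the induction terminates after finitely many steps, producing a finite bubble tree. The labeling is then flattened: for each $p_i$ we obtain finitely many bubbles $A_\infty^{i,j}$ with points $p_k^{i,j}$ and scales $\lambda_k^{i,j}$ as in the statement.

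\noindent \emph{Step 4: No loss of energy.} The core difficulty — and the main obstacle — is to show that the energy in the neck regions
\[
\mathcal{N}_k^{i,j}(\eta) := B_\eta(p_k^{i,j})\setminus B_{\lambda_k^{i,j}/\eta}(p_k^{i,j})
\]
vanishes as $k\to+\infty$ and then $\eta\to 0$. I would proceed in the annular decomposition obtained in Step 3 and prove, on each neck, the pointwise estimate of Theorem \ref{sharpestimateneck} which gives
\[
\int_{\mathcal{N}_k^{i,j}(\eta)}|F_{A_k}|_h^2\,\mathrm{vol}_h \;\leq\; C\int_{\mathcal{N}_k^{i,j}(\eta)} E^2\bigl(\omega_{\eta,\lambda_k^{i,j}/\eta}\bigr)^2\,\mathrm{vol}_h \;\xrightarrow[k\to+\infty]{}\;\gdo{\eta^2}.
\]
Equivalently, one can use the refined $\mathrm{L}^{2,1}$ energy quantization for Yang-Mills, as cited in the paper. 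Combined with the strong convergence on the thick part (Step 1) and on the bubble scales (Step 2), summing over all neck regions and passing to the limit $k\to+\infty$ and then $\eta\to 0$ yields
\[
\lim_{k\to+\infty}\mathcal{YM}(A_k) \;=\; \mathcal{YM}(A_\infty) + \sum_{i,j}\mathcal{YM}(A_\infty^{i,j}),
\]
which is point (3). The hard part is genuinely the sharp neck estimate, since without it one only obtains an inequality $\geq$ via lower semicontinuity, leaving a potential loss of energy in the necks.
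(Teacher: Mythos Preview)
The paper does not give a self-contained proof of this theorem: it is stated as a reformulation, in the bundle-free framework of the appendix, of the energy quantization due to Rivi\`ere (theorem I.2 of \cite{riviere2002interpolation}, theorem VII.3 of \cite{rivière2015variations}). Only the meaning of point (3) in the single-bubble model case is spelled out. Your Steps 1--3 are the standard concentration--compactness extraction and match the route taken in those references; they are fine as a proof plan.

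Step 4, however, contains a genuine gap. You claim that integrating the squared pointwise bound from Theorem \ref{sharpestimateneck} over the neck yields $\mathcal{O}(\eta^2)$. It does not. With $R=\eta$ and $r=\lambda_k/\eta$ one computes
\[
\int_{B_\eta\setminus B_{\lambda_k/\eta}} \omega_{R,r}(x)^2\,\diff x
\;=\;\int_{B_\eta\setminus B_{\lambda_k/\eta}}\Bigl(\frac{1}{\eta^2}+\frac{\lambda_k^2}{\eta^2|x|^4}\Bigr)^2\diff x
\;=\; C + o_k(1),
\]
a universal constant, not something that vanishes with $\eta$. So the inequality you write collapses to $\|F_{A_k}\|_{L^2(\text{neck})}^2\leq C\,E^2$ with $E=\|F_{A_k}\|_{L^2(\text{slightly larger neck})}$, which is tautological. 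The sharp pointwise decay alone, used this way, does not prove that the neck energy vanishes.

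The argument that actually closes this step is the Lorentz-space interpolation of Rivi\`ere (which you allude to but do not carry out): on the neck one proves $\|F_{A_k}\|_{L^{2,\infty}}\to 0$ from $\epsilon$-regularity on dyadic annuli, while $\|F_{A_k}\|_{L^{2,1}}$ stays bounded (this is the delicate part and uses the structure of the Yang-Mills system in Coulomb gauge); then $\|F_{A_k}\|_{L^2}^2\leq \|F_{A_k}\|_{L^{2,1}}\|F_{A_k}\|_{L^{2,\infty}}\to 0$. Alternatively one can run a differential inequality for the energy on dyadic annuli. Either way, something beyond the raw pointwise bound is required; your displayed computation as written is incorrect.
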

Let us clarify the last point of the theorem in the generic situation where there is only one bubble concentrating at the center of the ball $\mathrm{B}^4$. So we have $A_k\in \mathfrak U_G(\mathrm{B}^4)$ such that $A_k$ converge to $A_\infty \in \mathfrak U_G(\mathrm{B}^4)$ in $\mathfrak C_{G,\mathrm{loc}}(\mathrm{B}^4\setminus \{0\})$ and there exists $\lambda_k\rightarrow 0$ and $A_\infty^1\in \mathfrak U_G(S^4)$ such that
$$\phi_k^*(A_k) \rightarrow \widehat{A}_\infty^1=\pi_*A_\infty^1 \text{ into } \mathfrak C_{G,\mathrm{loc}}^\infty(\R^4)$$
where $\phi_k(x)=\lambda_k x$ and $\pi$ is the stereographic projection. Then the last point of the theorem means that there is no loose of energy in the neck region, i.e. the annular whose inner radius is big with respect to $\lambda_k$ and outer radius small with respect to $1$. That is to say,
$$\lim_{\eta \rightarrow 0} \lim_{k\rightarrow +\infty} \Vert (dA_k+A_k\wedge A_k)\Vert_{\mathrm{L}^2 (B(0,\eta)\setminus B(0,\lambda_k/\eta)}=0.$$
\subsection{Functional analysis results}

\begin{lemma} \label{projectionstereosobo}
 The stereographic projection $\pi : \mathbf{S}^4\setminus\{\text{north pole}\} \rightarrow
  \R^4$ induces an isometry $\pi^{\ast} : \dot{\mathrm{W}}^{1, 2}
  (\mathbf{S}^4,T^*\mathbf{S}^4\otimes\mathfrak{g}) \rightarrow \dot{\mathrm{W}}^{1, 2}
    (\R^4,T^*\R^4\otimes \mathfrak{g})$.
\end{lemma}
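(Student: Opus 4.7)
The stereographic projection $\pi$ is a conformal diffeomorphism from $S^4\setminus\{N\}$ onto $\R^4$, with pulled-back metric $(\pi^{-1})^* g_{S^4} = \Omega^{-2}\, g_E$ for $\Omega(y)=(1+|y|^2)/2$. My plan is to derive the lemma purely from the conformal invariance of $L^2$-based quantities on forms in the critical dimension $n=4$; since everything is linear in the Lie algebra coefficients, I would expand in a basis of $\mathfrak g$ and reduce to the scalar-coefficient case.

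The first step is to use that exterior differentiation commutes with pullback, so $d(\pi^* a) = \pi^*(da)$, and the dimensional matching for forms: under a conformal change $\tilde g = e^{2f}g$ on an $n$-manifold, a $k$-form $\omega$ satisfies
\[
|\omega|^2_{\tilde g}\,d\mathrm{vol}_{\tilde g} \;=\; e^{(n-2k)f}|\omega|^2_g\,d\mathrm{vol}_g,
\]
so the integrand is pointwise conformally invariant exactly when $n=2k$. Applied with $n=4,\ k=2$ to the $2$-form $da$, and combined with the change of variables $y=\pi(x)$, this gives
\[
\int_{\R^4}|d(\pi^*a)|^2_E\,dx \;=\; \int_{S^4}|da|^2_{g_{S^4}}\,d\mathrm{vol}_{g_{S^4}},
\]
which is the decisive identity for the $\|d\cdot\|_{L^2}$ part of the semi-norm. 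The second step handles the codifferential contribution: unlike $d$, the operator $d^*$ is \emph{not} conformally invariant on $1$-forms, satisfying $d^*_{\tilde g}a = e^{-2f}\bigl(d^*_g a - 2\,\iota_{\nabla f}a\bigr)$. I would deal with this either by restricting to a Coulomb slice ($d^*a=0$, in which case the first step is already enough) or by defining $\dot W^{1,2}$ via $\|a\|^2=\|da\|_{L^2}^2+\|d^*a\|_{L^2}^2$ with Hodge decomposition and verifying that the cross-term coming from $\iota_{\nabla f}$ vanishes on the relevant components.

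The main obstacle is precisely the handling of this codifferential term and the compatibility of the two Sobolev spaces. My plan to circumvent it cleanly is to invoke the Bochner–Weitzenböck identity $d^*d+dd^* = \nabla^*\nabla + \mathrm{Ric}$, comparing the two sides through $\int|\nabla a|^2$ up to a curvature correction that vanishes identically in the flat case and is a controlled lower-order perturbation on $S^4$; combined with the density of $C^\infty_c(S^4\setminus\{N\})$ in $\dot W^{1,2}(S^4)$ (which follows from the fact that a point has zero $2$-capacity in dimension four, an instance of the point-removability phenomenon used already in the bubble-tree framework above), this extends the identity from smooth compactly supported forms on $S^4\setminus\{N\}$ to the full space $\dot W^{1,2}(S^4,T^*S^4\otimes\mathfrak{g})$, yielding the asserted isometry.
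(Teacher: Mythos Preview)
The paper does not actually give a proof: it simply refers to \cite[proposition IV.1]{daliorivieregianocca2022morse}, which is the analogous statement for \emph{functions} on $S^2$, where the Dirichlet energy $\int|\nabla u|^2$ is genuinely conformally invariant. So there is no detailed argument here to compare yours against, and your write-up is already more explicit than the paper's.

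Your first step and the density argument are both correct and are the substantive content of the lemma: $\int|da|^2$ is conformally invariant for the $2$-form $da$ in dimension~$4$, $d$ commutes with pullback, and a point has zero $2$-capacity in dimension~$4$ so $C^\infty_c(S^4\setminus\{N\})$ is dense in $\dot W^{1,2}(S^4)$. This is exactly what the reference to \cite{daliorivieregianocca2022morse} is presumably meant to convey.

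The gap is in your handling of the codifferential. Your own computation shows that $d^*$ is not conformally invariant on $1$-forms, and none of your proposed remedies closes this. Restricting to a Coulomb slice only treats a subspace, not all of $\dot W^{1,2}$. The Bochner--Weitzenb\"ock route does not yield an identity: the Ricci term on $S^4$ is a nonzero constant, so comparing $\int|\nabla a|^2$ across the two metrics leaves a genuine curvature remainder---you get an inequality, not the equality required for a literal isometry. And you have not shown (nor is it true in general) that the cross term $\int d^*a\cdot\iota_{\nabla f}a$ vanishes after Hodge decomposition. If the $\dot W^{1,2}$ seminorm is $\|da\|_{L^2}^2+\|d^*a\|_{L^2}^2$ (equivalently $\|\nabla a\|_{L^2}^2$ by Gaffney, lemma~\ref{gaffney}), the honest statement is that $\pi^*$ is a bounded \emph{isomorphism} with bounded inverse rather than a literal isometry. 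This is all that is actually used downstream in the paper (to transfer the diagonalization of $\mathcal Q_{\widehat A_\infty}$ on $\R^4$ to that of $\mathcal Q_{\tilde A_\infty}$ on $S^4$, where only equivalence of the variational spaces is needed for lemmas~\ref{indiceformequaddiago} and~\ref{diagonalisationdelaformequad}). Your argument becomes complete and clean once you relax ``isometry'' to ``isomorphism with equivalent norms'' and drop the Bochner--Weitzenb\"ock detour entirely.
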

\begin{proof}
  Similar to \cite[proposition IV.1]{daliorivieregianocca2022morse}.
\end{proof}

\subsubsection{Hardy-Poincaré type inequalities in neck regions}

\begin{proposition} There exists $C>0$ such that for all $R>r>0$, for all $\mathrm{W}^{1, 2}_0 \left( \mathrm{B}_R \backslash \mathrm{B}_r\right)$-valued $1$-form $a$,
\[ \int_{\mathrm{B}_R \backslash \mathrm{B}_r} \frac{| a |^2}{| x |^2} \diff x \leqslant C \| \nabla
   a \|_{\mathrm{L}^2 \left( \mathrm{B}_R \backslash \mathrm{B}_r \right)}^2 \] \label{hardyneck}
\end{proposition}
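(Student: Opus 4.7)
The plan is to reduce the inequality to the classical scalar Hardy inequality in $\R^4$. Since $a$ lies in $\mathrm{W}^{1,2}_0(\mathrm{B}_R\setminus\mathrm{B}_r)$, extending it by zero produces an element of $\mathrm{W}^{1,2}(\R^4)$ whose support is bounded away from the origin. Choosing a coordinate frame on $T^*\R^4$ and an orthonormal basis of $\mathfrak{g}$, we decompose $a$ into scalar components $u \in \mathrm{W}^{1,2}(\R^4)$, each vanishing outside the annulus. The pointwise identities $|a|^2 = \sum |u|^2$ and $|\nabla a|^2 = \sum |\nabla u|^2$ then reduce the statement to establishing the scalar Hardy inequality
\[ \int_{\R^4} \frac{|u|^2}{|x|^2}\,\mathrm{d}x \leqslant C \int_{\R^4} |\nabla u|^2\,\mathrm{d}x \]
for each such $u$.

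For the scalar inequality I would use the standard integration-by-parts argument. By density one may assume $u \in \mathcal{C}^\infty_c(\R^4 \setminus \{0\})$: mollification is legitimate here because our extended $u$ vanishes identically on a neighborhood of the origin. The key identity is
\[ \mathrm{div}\!\left(\frac{x}{|x|^2}\right) = \frac{2}{|x|^2} \qquad \text{on } \R^4 \setminus \{0\}, \]
which is the $n=4$ case of $\mathrm{div}(x/|x|^2)=(n-2)/|x|^2$. Integrating by parts against $|u|^2$ and applying Cauchy--Schwarz gives
\[ 2 \int_{\R^4}\frac{|u|^2}{|x|^2}\,\mathrm{d}x = -2\int_{\R^4} \frac{u\, x\cdot\nabla u}{|x|^2}\,\mathrm{d}x \leqslant 2 \left(\int_{\R^4}\frac{|u|^2}{|x|^2}\,\mathrm{d}x\right)^{1/2}\!\!\left(\int_{\R^4}|\nabla u|^2\,\mathrm{d}x\right)^{1/2}, \]
so dividing by the first factor on the right yields the scalar inequality with constant $C = 1$ (the sharp constant in dimension $4$, corresponding to the general value $4/(n-2)^2$).

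There is no serious obstacle. The one point worth emphasizing is that the constant $C$ is genuinely independent of both $r$ and $R$: the Hardy inequality on $\R^4$ sees only the total integrals, not the size or location of the annular support of $u$, so the extension-by-zero step is precisely what prevents any geometric dependence on the annulus from entering the final constant. Summing over the finitely many scalar components produces the claimed inequality for $a$ with a constant $C$ depending only on the dimension of $\mathfrak{g}$.
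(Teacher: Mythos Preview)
Your proof is correct. It differs from the paper's argument, which does not invoke the classical Hardy inequality directly but instead factors the weighted $\mathrm{L}^2$ norm through Lorentz spaces: one writes $\|a/|x|\|_{\mathrm{L}^2}\leqslant \|1/|x|\|_{\mathrm{L}^{4,\infty}}\|a\|_{\mathrm{L}^{4,2}}$, then appeals to the sharp Sobolev embedding $\dot{\mathrm{W}}^{1,2}_0\hookrightarrow \mathrm{L}^{4,2}$ and the finiteness of $\|1/|x|\|_{\mathrm{L}^{4,\infty}(\R^4)}$. Your integration-by-parts argument is more elementary and self-contained, and it yields the sharp constant $C=1$; the paper's route, by contrast, is consistent with the Lorentz-space machinery used elsewhere in the article (e.g.\ the improved Sobolev embeddings and the $\mathrm{L}^{4,2}\cdot \mathrm{L}^{4,\infty}\hookrightarrow \mathrm{L}^2$ duality that appear repeatedly). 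One small remark: in your final sentence, the constant does not in fact depend on $\dim\mathfrak{g}$, since the componentwise inequalities sum on both sides with the same constant.
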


\begin{proof}
Using $\mathrm{L}^{4, 2} \cdot \mathrm{L}^{4, \infty} \hookrightarrow
\mathrm{L}^2$ and $\dot{\mathrm{W}}_0^{1, 2} \hookrightarrow \mathrm{L}^{4, 2}$, we get \begin{align*}
    \int_{\mathrm{B}_R \backslash \mathrm{B}_r} \frac{| a |^2}{| x |^2} \diff x =
   \left\| \frac{1}{| x |} a \right\|_{\mathrm{L}^2 \left( \mathrm{B}_R \backslash
   \mathrm{B}_r \right)}^2 &\leqslant \left\| \frac{1}{| x |}
   \right\|^2_{\mathrm{L}^{4, \infty} (\R^4)} \| a \|_{\mathrm{L}^{4, 2}
   \left( \mathrm{B}_R \backslash \mathrm{B}_r \right)}^2 \\
   & \leqslant C \left\|
   \frac{1}{| x |} \right\|^2_{\mathrm{L}^{4, \infty} (\R^4)} \| \nabla
   a \|_{\mathrm{L}^2 \left( \mathrm{B}_R \backslash \mathrm{B}_r \right)}^2 .
\end{align*} We conclude thanks to
\[ \left\| \frac{1}{| x |} \right\|_{\mathrm{L}^{4, \infty} (\R^4)} =
   \sup_{t > 0} t | \{ x \in \R^4, | x |^{- 1} > t \} |^{1 / 4} =
   \sup_{t > 0} t \left| \mathrm{B}_{1 / t} \right|^{1 / 4} = \left| \mathrm{B}_1
   \right|^{1 / 4} < + \infty . \]
\end{proof}

\begin{proposition} \label{poincareneck}
   There exists $C>0$ such that for all $R>r>0$, for all $ \mathrm{W}^{1, 2}_0 \left( \mathrm{B}_R \backslash \mathrm{B}_r\right)$-valued $1$-form $a$  \begin{align*}
        \int_{\mathrm{B}_R \backslash \mathrm{B}_r} | a |^2 \diff x &\leqslant C R^2 
   \| \nabla a \|_{\mathrm{L}^2 \left( \mathrm{B}_R \backslash \mathrm{B}_r
   \right)}^2, \\
  \int_{\mathrm{B}_R \backslash \mathrm{B}_r} \frac{| a |^2}{|x|^4}\diff x  &\leqslant \frac{C}{r^2} \| \nabla a \|_{\mathrm{L}^2 \left( \mathrm{B}_R
   \backslash \mathrm{B}_r \right)}^2.
    \end{align*}
\end{proposition}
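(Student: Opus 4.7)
The plan is to prove the two inequalities separately, as each of them reduces straightforwardly to a previously established tool: the first to the standard scalar Poincaré inequality on a ball, and the second to the Hardy-type inequality of Proposition \ref{hardyneck}.

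For the first inequality, the idea is to exploit the zero-trace condition on \emph{both} boundary components. Since $a$ is an $\mathrm{W}^{1,2}_0(\mathrm{B}_R \setminus \mathrm{B}_r)$-valued $1$-form, we may extend each scalar component $a_i$ by zero across $\partial \mathrm{B}_r$ to obtain an element of $\mathrm{W}^{1,2}_0(\mathrm{B}_R)$. The usual scalar Poincaré inequality on the ball $\mathrm{B}_R$ then gives $\|a_i\|_{\mathrm{L}^2(\mathrm{B}_R)}^2 \leqslant C R^2 \|\nabla a_i\|_{\mathrm{L}^2(\mathrm{B}_R)}^2$ with constant scaling like $R^2$ (as can be checked by dilation reducing to the unit ball). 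Summing over $i$ and using that the extension by zero doesn't add mass outside $\mathrm{B}_R \setminus \mathrm{B}_r$ yields the first claim.

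For the second inequality, the key observation is the trivial pointwise bound $1/|x|^4 \leqslant 1/(r^2 |x|^2)$ on $\mathrm{B}_R \setminus \mathrm{B}_r$. Combined with Proposition \ref{hardyneck}, this immediately yields
\[ \int_{\mathrm{B}_R \setminus \mathrm{B}_r} \frac{|a|^2}{|x|^4} \diff x \leqslant \frac{1}{r^2} \int_{\mathrm{B}_R \setminus \mathrm{B}_r} \frac{|a|^2}{|x|^2} \diff x \leqslant \frac{C}{r^2} \|\nabla a\|_{\mathrm{L}^2(\mathrm{B}_R \setminus \mathrm{B}_r)}^2, \]
which is exactly the desired estimate. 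The main conceptual point is that although the weight $1/|x|^4$ looks more singular than $1/|x|^2$, on an annulus with inner radius $r$ it is only $r^{-2}$ times worse, so the Hardy inequality already contains all the analytic content. There is no real obstacle in the argument: both statements are short consequences of results already proven in this appendix.
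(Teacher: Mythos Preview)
Your proof is correct. For the first inequality your argument (extend by zero across $\partial \mathrm{B}_r$ and apply the scalar Poincar\'e inequality on $\mathrm{B}_R$, with the $R^2$ factor coming from scaling) is essentially the paper's argument.

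For the second inequality you take a genuinely different route. The paper pulls back by the inversion $\varphi(x)=x/|x|^2$, so that $b=\varphi^*a\in \mathrm{W}^{1,2}_0(\mathrm{B}_{1/r}\setminus \mathrm{B}_{1/R})$, applies the \emph{first} inequality to $b$ (with outer radius $1/r$), and then uses the conformal invariance of the $\dot{\mathrm{W}}^{1,2}$ energy of $1$-forms in dimension $4$ together with the change of variables $\int|b|^2\,\diff x=\int |a|^2/|x|^4\,\diff x$. Your argument instead observes the pointwise bound $|x|^{-4}\leqslant r^{-2}|x|^{-2}$ on the annulus and invokes Proposition~\ref{hardyneck} directly. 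Your approach is shorter and entirely elementary, sidestepping the conformal computation; the paper's approach has the virtue of deducing the second inequality from the first alone (without the separate Hardy inequality) and makes transparent the duality between the two estimates under inversion, which is natural given how the weight $\omega_{R,r}$ is later used.
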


\begin{proof} The first inequality is true for $R=1$ and $r\in]0,1[$ (using Poincaré inequality), then we apply it to
$m_R^{\ast} a \in \mathrm{W}^{1, 2}_0 \left( \mathrm{B}_1 \backslash \mathrm{B}_{r /
R} \right)$ where $m_R : x \mapsto R x$).

\noindent We prove the second inequality using the conformal invariance of the right-hand side and the first one : introduce $b =
\varphi^{\ast} a$ (where $\varphi : x \mapsto x / | x |^2$ is the inversion), we have
$b \in \mathrm{W}^{1, 2}_0 \left( \mathrm{B}_{1 / r} \backslash \mathrm{B}_{1/R}
\right)$ so
\[ \int_{\mathrm{B}_{1 / r} \backslash \mathrm{B}_{1 / R}} | b |^2 \diff x
   \leqslant \frac{C}{r^2}  \| \nabla a \|_{\mathrm{L}^2 \left( \mathrm{B}_R
   \backslash \mathrm{B}_r \right)}^2. \]
Then, since $\varphi^{\ast} \diff x = \frac{1}{| x |^8} \diff x$, we get \begin{align*}
\int_{\mathrm{B}_{1 / r} \backslash \mathrm{B}_{1 / R}} | b |^2 \diff x =
   \int_{\mathrm{B}_{1 / r} \backslash \mathrm{B}_{1 / R}} \left( \frac{1}{| x
   |^2} \right)^2 | a |^2 \circ \varphi \,\diff x &= \int_{\mathrm{B}_{1 / r}
   \backslash \mathrm{B}_{1 / R}} \varphi^{\ast} \left( \frac{1}{| x |^4} | a
   |^2 \diff x \right)\\ &= \int_{\mathrm{B}_R \backslash \mathrm{B}_r} \frac{| a
   |^2}{| x |^4} \diff x.    
\end{align*}
\end{proof}

\subsubsection{Gaffney inequality}

\begin{lemma}\label{gaffney} Let $n\in \N^*$.
   For all $p\in ]1;+\infty[$, $q\in ]0;+\infty] $, there exists a constant $C_{p,q}>0$ such that for all differential forms $\omega$ on $\R^n$, \[\|\nabla\omega\|_{\mathrm{L}^{p,q}(\R^n)}\leqslant C_{p,q} \left(\|\diff \omega\|_{\mathrm{L}^{p,q}(\R^n)}+ \|\diff^{\ast} \omega\|_{\mathrm{L}^{p,q}(\R^n)} \right).\]
\end{lemma}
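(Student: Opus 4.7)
My plan is to reduce the estimate to the $\mathrm L^{p,q}$-boundedness of second order Riesz transforms on $\R^n$. I would first work with smooth compactly supported forms and recover the general statement by approximation at the end. The starting point is the flat-space Hodge identity: on $(\R^n,\delta_{ij})$, writing $\omega=\sum_I \omega_I\,\diff x^I$, the Hodge Laplacian coincides with the componentwise scalar Laplacian,
\[ (\diff\diff^*+\diff^*\diff)\omega = -\sum_I (\Delta\omega_I)\,\diff x^I. \]
Moreover each component of $\diff\diff^*\omega$ (resp.\ of $\diff^*\diff\omega$) is a first-order partial derivative of a component of $\diff^*\omega$ (resp.\ of $\diff\omega$). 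On the Fourier side this gives, for every index $k$ and every multi-index $I$,
\[ \widehat{\partial_k\omega_I}(\xi) \;=\; \frac{i\xi_k}{|\xi|^2}\,\widehat{(\diff\diff^*+\diff^*\diff)\omega_I}(\xi) \;=\; -\sum_l \frac{\xi_k\xi_l}{|\xi|^2}\,\hat v_{I,l}(\xi), \]
where each $v_{I,l}$ is a scalar component of either $\diff\omega$ or $\diff^*\omega$. The multiplier $\xi_k\xi_l/|\xi|^2$ defines (up to a sign) the composition $R_kR_l$ of two Riesz transforms, i.e.\ a Calderón--Zygmund operator of convolution type.

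The next step is to invoke the $\mathrm L^{p,q}$-boundedness of these operators for all $1<p<\infty$ and $0<q\leqslant\infty$. This is a consequence of the classical $\mathrm L^p$-boundedness ($1<p<\infty$), together with the real interpolation identity $(\mathrm L^{p_0},\mathrm L^{p_1})_{\theta,q}=\mathrm L^{p,q}$ (Hunt's interpolation theorem, see section 1.4 of \cite{Grafakos1}). Inserting this bound into the previous representation yields
\[ \|\nabla\omega\|_{\mathrm L^{p,q}} \;\leqslant\; C_{p,q}\sum_{I,l}\|v_{I,l}\|_{\mathrm L^{p,q}} \;\leqslant\; C'_{p,q}\bigl(\|\diff\omega\|_{\mathrm L^{p,q}} + \|\diff^*\omega\|_{\mathrm L^{p,q}}\bigr) \]
for every $\omega\in C^\infty_c(\R^n,\Lambda^*\R^n)$.

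Finally I would remove the smoothness assumption: for $q<\infty$, $C^\infty_c(\R^n)$ is dense in $\mathrm L^{p,q}$ and the inequality extends by mollification; for $q=\infty$ it extends by duality against the already-proven $\mathrm L^{p',1}$ inequality, using that $\diff$ and $\diff^*$ are first-order constant coefficient operators on $\R^n$ so that integration by parts is harmless. The only mildly delicate point of the whole proof is this weak-type case $q=\infty$, but it is routine; the essential content is the Fourier-multiplier representation of $\nabla\omega$ in terms of second-order Riesz transforms applied to the components of $\diff\omega$ and $\diff^*\omega$, together with the Lorentz-space boundedness of such Calderón--Zygmund operators.
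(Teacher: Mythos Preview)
Your argument is correct and is in fact the standard route to this inequality: the flat-space Hodge identity reduces $\nabla\omega$ to second-order Riesz transforms of the components of $\diff\omega$ and $\diff^{\ast}\omega$, and real interpolation of the strong $\mathrm{L}^{p_0}$/$\mathrm{L}^{p_1}$ bounds for Calder\'on--Zygmund operators yields the full Lorentz range $1<p<\infty$, $0<q\leqslant\infty$.

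The paper itself does not prove the lemma; it only refers the reader to \cite{troyanov2010hodge}. So your write-up is strictly more than what the authors provide, and there is nothing to compare against beyond noting that the reference proceeds along the same lines (Hodge identity plus singular-integral bounds). One small remark: your treatment of the endpoint $q=\infty$ via ``duality against $\mathrm{L}^{p',1}$'' is slightly imprecise, since $\mathrm{L}^{p,\infty}$ is not the dual of $\mathrm{L}^{p',1}$ and smooth compactly supported functions are not dense in $\mathrm{L}^{p,\infty}$. It is cleaner to observe that the operator bound $R_kR_l:\mathrm{L}^{p,\infty}\to\mathrm{L}^{p,\infty}$ already follows directly from real interpolation of two strong $\mathrm{L}^{p_i}$ bounds, so no separate density or duality step is needed at the operator level; the inequality then holds for any form whose right-hand side is finite, interpreting $\nabla\omega$ distributionally.
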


\begin{proof}
    See for instance \cite{troyanov2010hodge}.
\end{proof}

\subsection{Diagonalization of quadratic forms} \label{Diagformquad}

\subsubsection{Spectral theory results}

\begin{lemma} \label{formequaddiagonalisée}
  Let $(\mathcal{L}, D (\mathcal{L}))$ a self-adjoint operator, bounded below, on a Hilbert space $\mathfrak{H}$. Denote by $q$ the corresponding closed quadratic form with domain $\mathcal{Q}$. Assume that $\mathcal{L}$ has compact resolvent and denote by $(e_n)_{n \in \N}$ an Hilbert basis of eigenvectors, with corresponding eigenvalues $(\lambda_n)_{n \in \N}$ (in ascending order). Then
  \[ \mathcal{Q}= \left\{ x \in \mathfrak{H}; \sum_{n \in \N} |
     \lambda_n | | \langle e_n, x \rangle |^2 < + \infty \right\} \]
  and if $x \in \mathcal{Q}$, $q (x) = \sum_{n \in \N} \lambda_n |
  \langle x, e_n \rangle |^2$.
\noindent Furthermore,
  \[ \mathcal{Q}= \bigoplus_{\lambda \leqslant 0} \ker (\mathcal{L}- \lambda)
     \oplus \overline{\bigoplus_{\lambda > 0} \ker (\mathcal{L}- \lambda)} \]
 where $\mathcal{Q}$ is equipped with the Hilbert norm associated with the quadratic form $\mathcal{N}: x \mapsto q (x) + (1 - \lambda_0) \| x
  \|^2$.
\end{lemma}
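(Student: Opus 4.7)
My plan is to reduce everything to functional calculus applied to a positive shift of $\mathcal{L}$. Since $\mathcal{L}$ is self-adjoint and bounded below by $\lambda_0$, set $\tilde{\mathcal{L}} := \mathcal{L} - \lambda_0 + 1$, which is self-adjoint and satisfies $\tilde{\mathcal{L}} \geq 1$. By the standard correspondence between semi-bounded self-adjoint operators and closed quadratic forms (see e.g. Reed--Simon, vol. I, thm VIII.15), the domain of $q$ coincides with $\mathcal{Q} = D(\tilde{\mathcal{L}}^{1/2})$ and, for all $x \in \mathcal{Q}$,
\[
q(x) + (1-\lambda_0)\|x\|^2 \;=\; \|\tilde{\mathcal{L}}^{1/2} x\|^2 \;=\; \mathcal{N}(x).
\]

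The first step is to use the compact resolvent hypothesis together with the spectral theorem to obtain the orthonormal basis $(e_n)$ of eigenvectors of $\mathcal{L}$ (equivalently, of $\tilde{\mathcal{L}}$) with eigenvalues $\lambda_n \to +\infty$, and then characterise $D(\tilde{\mathcal{L}}^{1/2})$ via functional calculus as
\[
D(\tilde{\mathcal{L}}^{1/2}) \;=\; \Bigl\{ x \in \mathfrak{H} \;:\; \sum_n (\lambda_n - \lambda_0 + 1)\, |\langle e_n, x\rangle|^2 < +\infty \Bigr\}.
\]
Since $\lambda_n \geq \lambda_0$, the weights $\lambda_n - \lambda_0 + 1$ and $1 + |\lambda_n|$ are comparable up to multiplicative constants, so using Parseval's identity $\sum_n |\langle e_n, x\rangle|^2 = \|x\|^2 < +\infty$, the above condition is equivalent to $\sum_n |\lambda_n|\,|\langle e_n, x\rangle|^2 < +\infty$, giving the first claim.

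Second, expanding $\|\tilde{\mathcal{L}}^{1/2}x\|^2 = \sum_n (\lambda_n - \lambda_0 + 1)\,|\langle e_n, x\rangle|^2$ in the identity $q(x) = \mathcal{N}(x) - (1-\lambda_0)\|x\|^2$ and using Parseval's identity once more yields
\[
q(x) \;=\; \sum_n \lambda_n\, |\langle e_n, x\rangle|^2,
\]
which is the second claim.

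Finally, for the orthogonal decomposition of $\mathcal{Q}$, I observe that the inner product associated with $\mathcal{N}$ can be written
\[
(x,y)_{\mathcal{N}} \;=\; \sum_n (\lambda_n - \lambda_0 + 1)\, \langle e_n, x\rangle\, \overline{\langle e_n, y\rangle},
\]
so the $e_n$ remain mutually orthogonal for $(\cdot,\cdot)_{\mathcal{N}}$ and the rescaled family $(e_n / \sqrt{\lambda_n - \lambda_0 + 1})$ is a Hilbert basis of $(\mathcal{Q}, \mathcal{N})$. Splitting the index set according to $\lambda_n \leq 0$ versus $\lambda_n > 0$ produces the stated decomposition. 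The closure on the positive side is necessary because infinitely many eigenvalues lie in $(0,+\infty)$, whereas $\bigoplus_{\lambda \leq 0} \ker(\mathcal{L}-\lambda)$ is automatically finite-dimensional (hence closed) since $\lambda_n \to +\infty$ forces only finitely many eigenvalues to be non-positive.

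The only mildly subtle point is the equivalence between the weights $\lambda_n - \lambda_0 + 1$ and $|\lambda_n|$ when $\lambda_0 < 0$: this is the one step that must be written out explicitly, but it reduces to an elementary two-sided bound together with the $\ell^2$-summability of $(\langle e_n, x\rangle)$. Everything else is a direct transcription of spectral theory.
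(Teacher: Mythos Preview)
Your proof is correct and follows a somewhat different route from the paper's. The paper works directly with the form domain: it introduces $\mathcal{Q}' = \{x : \sum |\lambda_n|\,|\langle e_n,x\rangle|^2 < +\infty\}$, shows the inclusion $\mathcal{Q}' \subset \mathcal{Q}$ by checking that the partial sums $x_N = \sum_{n\leq N}\langle e_n,x\rangle e_n$ are Cauchy for $\sqrt{\mathcal{N}}$, and then proves the reverse inclusion by showing that $\mathcal{Q}'$ is both dense (its $\mathcal{N}$-orthogonal complement is trivial) and closed in $(\mathcal{Q},\mathcal{N})$. You instead invoke the identification $\mathcal{Q} = D(\tilde{\mathcal{L}}^{1/2})$ from the standard form--operator correspondence and read off the characterisation of this domain via functional calculus on the discrete spectrum, which collapses the argument to a weight-comparison. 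Your approach is shorter and delegates the work to well-known spectral theory; the paper's argument is more self-contained and avoids explicitly introducing the square root $\tilde{\mathcal{L}}^{1/2}$. Both handle the final decomposition the same way, via $\mathcal{N}$-orthogonality of the eigenvectors.
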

\begin{remark} The hypothesis $\mathcal{L}$ has compact resolvent is equivalent to the compactness of the embedding $(\mathcal{Q},\mathcal{N}) \hookrightarrow \mathfrak{H}$.
\end{remark}
\begin{proof} Since $\lambda_n \rightarrow + \infty$, $\sum_{n \in
  \N} | \lambda_n | | \langle e_n, x \rangle |^2 < + \infty$ if and only if
  $\sum_{n \in \N} \lambda_n | \langle e_n, x \rangle |^2$ is convergent.
  
  \noindent By definition of $\mathcal{Q}$ (see \cite[théorème 3.10]{lewin:hal-03812036}), $\sqrt{\mathcal{N}}$
  is a Hilbert norm on $\mathcal{Q}$ and there is a continuous embedding $\mathcal{Q}
  \hookrightarrow \mathfrak{H}$ (since $\| x \|^2 \leqslant \mathcal{N}
  (x)$).
  
 \noindent For all $x \in \mathfrak{H}$, $x = \sum_{n \in \N} \langle e_n, x
  \rangle e_n$. Introduce $\mathcal{Q}' = \left\{ x \in \mathfrak{H}; \sum_{n \in
  \N} | \lambda_n | | \langle e_n, x \rangle |^2 < + \infty \right\}$. Assume $\sum_{n \in \N} | \lambda_n | | \langle e_n, x \rangle
  |^2 < + \infty$. Then, writing $x_N = \sum_{n = 0}^N \langle e_n, x
  \rangle e_n$, we have $x_N \in \mathcal{Q}$, $x_N \rightarrow x$ in
  $\mathfrak{H}$ and
  \[ q (x_{N + p} - x_N) = \sum_{n = N}^{N + p} \lambda_n |\langle e_n, x
     \rangle|^2  \] so $(x_N)$ is a Cauchy sequence for  $\sqrt{\mathcal{N}}$. We deduce $x \in
  \mathcal{Q}$ and $q (x) = \sum_{n \in \N} \lambda_n | \langle
  e_n, x \rangle |^2$. Therefore $\mathcal{Q}' \subset \mathcal{Q}$.
  
  \noindent If ${x \in \mathcal{Q}'}^{\bot_{\mathcal{N}}}$ then for all $n \in
  \N$, $x$ is orthogonal (for $\mathcal{N}$) to $e_n$ \textit{i.e.}   \[ (\lambda_n - \lambda_0 + 1) | \langle e_n, x \rangle |^2 = 0 \]
 so  $\langle e_n, x \rangle = 0$ and we conclude $x = 0$. It follows that
  $\mathcal{Q}'$ is dense in $\mathcal{Q}$ for the norm $\sqrt{\mathcal{N}}$. Let's show that it is also closed. If $(p_k)_{k \in
  \N}$ is a sequence of elements of $\mathcal{Q}'$ and $x \in
  \mathcal{Q}$ are such that $\mathcal{N} (p_k - x) \rightarrow 0$ then, we have in particular
  \[ \| p_k - x \|^2 \leqslant q (p_k - x) + (1 - \lambda_0) \| p_k - x \|^2
     \underset{k \rightarrow + \infty}{\rightarrow} 0 \]
  hence $p_k \rightarrow x$ in $\mathfrak{H}$ and $\mathcal{N} (p_k)
  \rightarrow \mathcal{N} (x)$ so $q (p_k) \rightarrow q (x)$. Additionally, for $N$ large enough,
  \[ q (p_k) = \sum_{n \in \N} \lambda_n | \langle e_n, p_k \rangle
     |^2 \geqslant \sum_{n = 0}^N \lambda_n | \langle e_n, p_k \rangle |^2 \]
  so
  \[ q (x) = \lim q (p_k) \geqslant \lim \sum_{n = 0}^N \lambda_n | \langle
     e_n, p_k \rangle |^2 = \sum_{n = 0}^N \lambda_n | \langle e_n, x \rangle
     |^2 \]
  and having $N$ go to $+ \infty$, we conclude that $x \in
  \mathcal{Q}'$.
\end{proof}

\begin{lemma} \label{indiceformequaddiago}
  With the notations of the previous lemma, we have
  \[ \dim \left( \bigoplus_{\lambda < 0} \ker (\mathcal{L}- \lambda) \right)
     = \mathrm{ind} q, \]
  \[ \ker \mathcal{L}= \ker q. \]
 Furthermore,
  \[ \mathrm{ind} q + \dim \ker q = \inf \{ \mathrm{codim} W \mid q_{|W} > 0 \} <
     + \infty . \]
  In particular, if $F$ is a subspace on which $q$ is negative semi-definite then $\dim F
  \leqslant \mathrm{ind} q + \dim \ker q$.
\end{lemma}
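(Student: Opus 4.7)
The plan is to exploit the diagonalisation provided by the previous lemma: the orthogonal decomposition
\[ \mathcal{Q} = V_{-} \oplus V_{0} \oplus V_{+}, \qquad V_{-} = \bigoplus_{\lambda_n < 0}\ker(\mathcal{L}-\lambda_n),\ V_{0}=\ker \mathcal{L},\ V_{+} = \overline{\bigoplus_{\lambda_n > 0}\ker(\mathcal{L}-\lambda_n)}, \]
where the closure is taken in $(\mathcal{Q},\sqrt{\mathcal{N}})$. Since $\lambda_n \to +\infty$, both $V_{-}$ and $V_{0}$ are finite dimensional, and from the formula $q(x) = \sum \lambda_n |\langle e_n,x\rangle|^2$ we read off that $q$ is negative definite on $V_{-}$, vanishes on $V_{0}$, and is positive definite on $V_{+}$ (strictness on $V_{+}$ follows because $\sum_{\lambda_n>0}\lambda_n|\langle e_n,x\rangle|^2 = 0$ forces all the coefficients to vanish).

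First I would prove $\ker \mathcal{L} = \ker q$. The inclusion $\ker\mathcal{L}\subset\ker q$ is immediate from the diagonal expression of the polar form $q(x,y)=\sum \lambda_n\langle e_n,x\rangle\overline{\langle e_n,y\rangle}$. Conversely, if $x\in\ker q$, plugging $y=e_n$ gives $\lambda_n\langle e_n,x\rangle=0$ for every $n$, so $\langle e_n,x\rangle=0$ whenever $\lambda_n\neq 0$, hence $x\in V_{0}=\ker\mathcal{L}$.

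Next I would handle the index identity $\dim V_{-} = \mathrm{ind}\, q$. The inequality $\mathrm{ind}\, q \geqslant \dim V_{-}$ is obtained by taking $W = V_{-}$, on which $q$ is negative definite. For the converse, let $W$ be any subspace with $q_{|W}<0$. If $x\in W\cap (V_{0}\oplus V_{+})$, then $x = x_0 + x_+$ and $q(x) = q(x_+) \geqslant 0$, yet $q(x) < 0$ unless $x=0$, so $W\cap(V_{0}\oplus V_{+})=\{0\}$. Projecting $W$ onto $V_{-}$ along $V_{0}\oplus V_{+}$ is therefore injective, giving $\dim W \leqslant \dim V_{-}$.

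The identity $\mathrm{ind}\, q + \dim\ker q = \inf\{\mathrm{codim}\, W \mid q_{|W}>0\}$ follows by a symmetric argument: the subspace $V_{+}$ has codimension $\dim V_{-}+\dim V_{0}$ and satisfies $q_{|V_{+}}>0$, giving the upper bound; conversely, any $W$ with $q_{|W}>0$ must intersect $V_{-}\oplus V_{0}$ trivially by the same dichotomy, hence $\mathrm{codim}\, W \geqslant \dim V_{-}+\dim V_{0}$. Finiteness is automatic since $\dim V_{-} + \dim V_{0}<+\infty$. Finally, the last assertion about negative semi-definite subspaces $F$ is a direct corollary: for any $W$ with $q_{|W}>0$ one has $F\cap W = \{0\}$ (a common nonzero vector would give both $q(x)\leqslant 0$ and $q(x)>0$), so $\dim F \leqslant \mathrm{codim}\, W$; taking the infimum over such $W$ yields $\dim F \leqslant \mathrm{ind}\, q + \dim\ker q$. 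I expect no serious obstacle here; the only point requiring minor care is to verify strict positivity of $q$ on the closed subspace $V_{+}$, which is why the expansion in the eigenbasis (rather than just spectral projectors on $\mathfrak{H}$) is the appropriate tool.
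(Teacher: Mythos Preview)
Your proof is correct and follows essentially the same route as the paper's: both exploit the decomposition $\mathcal{Q}=V_-\oplus V_0\oplus V_+$ from the previous lemma, then use the elementary fact that a subspace with trivial intersection with another has dimension bounded by the latter's codimension. Your treatment is in fact slightly more explicit in places (the argument for $\ker q=\ker\mathcal{L}$ via the polar form, and the phrasing of the dimension bound as injectivity of a projection), but the underlying ideas are identical.
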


\begin{remark}
  Let $F, W$ two subspaces of a vector space $E$. Assume that $F
  \cap W = \{ 0 \}$. Then $\dim F \leqslant \mathrm{codim} W$. Indeed, the restriction to $F$ of the projection $E \rightarrow E / W$ is one-to-one.
\end{remark}

\begin{proof}
  Firstly, $\ker q = \ker \mathcal{L}$ and on $\displaystyle \bigoplus_{\lambda < 0}
  \ker (\mathcal{L}- \lambda)$, $q$ is negative-definite so
  \[ \dim \left( \bigoplus_{\lambda < 0} \ker (\mathcal{L}- \lambda) \right)
     \leqslant \mathrm{ind} q. \]
  
  Let $W$ a subspace of $\mathcal{Q}$ such that $q_{|W} < 0$. Then $W \cap
  \overline{ \displaystyle\bigoplus_{\lambda \geqslant 0} \ker (\mathcal{L}- \lambda)} = \{
  0 \}$: if $w \in W \cap \overline{ \displaystyle \bigoplus_{\lambda \geqslant 0}
  \ker (\mathcal{L}- \lambda)}$, we can write $w = v + z$ with $v \bot
  z$, $v \in \ker q$ et $q (z) \geqslant 0$ so
  \[ q (w) = q (z) \geqslant \lambda_0 \| z \|^2 \]
 and we deduce $w = 0$. Thus,
  \[ \dim W \leqslant \mathrm{codim} \left( \overline{ \displaystyle \bigoplus_{\lambda
     \geqslant 0} \ker (\mathcal{L}- \lambda)} \right) = \dim \left(
     \bigoplus_{\lambda < 0} \ker (\mathcal{L}- \lambda) \right) \] which concludes the proof of the first equality.
    
  \noindent Let $W$ a subspace of $\mathcal{Q}$ such that $q_{|W} > 0$. Then $W \cap  \displaystyle
  \bigoplus_{\lambda \leqslant 0} \ker (\mathcal{L}- \lambda) = \{ 0 \}$ and
  $\mathrm{codim} W \geqslant \dim \left(   \displaystyle\bigoplus_{\lambda \leqslant 0} \ker
  (\mathcal{L}- \lambda) \right) = \ind q + \dim \ker q$. Furthermore $q$ is positive-definite on
  $W = \overline{ \displaystyle\bigoplus_{\lambda > 0} \ker (\mathcal{L}- \lambda)}$ and $\mathrm{codim} W =\ind q + \dim \ker q$, which proves the third equality.
  
  If a subspace $F$ is such that $q_{|F} < 0$, then for all subspace $W$ such that $q_{|W} > 0$ we have $W \cap F = \{ 0 \}$ so $\dim F \leqslant \mathrm{codim} W$. The last statement follows.
\end{proof}

\subsubsection{A diagonalisability condition}

\begin{lemma} \label{diagonalisationdelaformequad}
  Let $(\mathcal{M}^4, h)$ a closed Riemannian manifold, $E\to \mathcal{M}$ a $G$-bundle and $\nabla$ a Yang-Mills connection. Consider a weight function $\omega : \mathcal{M} \rightarrow \R_+$ such that $\omega$ and $1/\omega$ are in $\mathrm{L}^{\infty}(\mathcal{M})$.
 Define $\mathfrak{H} =\mathrm{L}^2 (\mathcal{M}, T^*M\otimes \mathfrak{g})$ equipped with the inner product associated to the norm \[ \Vert a \Vert^2_{\mathrm{L}^2_\omega} = \int_M |a|^2_h \omega \mathrm{vol}_h, \] and $W := \mathrm{W}^{1, 2} (\mathcal{M},T^*M\otimes \mathfrak{g})$. Fix a bounded operator $B : \mathfrak{H}\rightarrow \mathfrak{H}$ and assume that $B$ is symmetric for the usuel $\mathrm{L}^2$ inner product. 
  
  The operator $\omega^{- 1} ( \Delta_\nabla + B)$ on $\mathfrak{H}$ with domain $\Omega^1(\mathcal{M}, \mathfrak{g})$ has a unique self-adjoint extension with domain is a subset of $W$. Furthermore, this extension has compact resolvent and is bounded below.
\end{lemma}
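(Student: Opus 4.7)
The plan is to realize $\mathcal{L} := \omega^{-1}(\Delta_\nabla + B)$ as the Friedrichs extension of a closed quadratic form on $\mathfrak{H}$ and then read off the compactness of the resolvent from the compactness of the form-domain embedding into $\mathfrak{H}$. First, I would check symmetry: for $a, b \in \Omega^1(\mathcal{M}, \mathfrak{g})$, using that $\Delta_\nabla$ and $B$ are both symmetric with respect to the standard $\mathrm{L}^2$ inner product, the weight cancels out,
\[
\langle \mathcal{L}a, b\rangle_{\mathrm{L}^2_\omega} = \int_{\mathcal{M}}\langle (\Delta_\nabla + B)a, b\rangle_h\, \mathrm{vol}_h = \langle a, \mathcal{L}b\rangle_{\mathrm{L}^2_\omega},
\]
so $\mathcal{L}$ is symmetric on $\Omega^1$ for $\langle \cdot, \cdot\rangle_{\mathrm{L}^2_\omega}$. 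Its associated quadratic form, which remarkably does not involve $\omega$, is
\[
Q(a) = \int_{\mathcal{M}}\left(|\diff_\nabla a|^2_h + |\diff_\nabla^* a|^2_h + \langle Ba, a\rangle_h\right)\mathrm{vol}_h.
\]

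Next, I would show that $Q$ is bounded below on $\mathfrak{H}$ and that its closure has form domain $W$. Because $\omega, 1/\omega \in \mathrm{L}^{\infty}(\mathcal{M})$, the norms $\Vert \cdot \Vert_{\mathrm{L}^2_\omega}$ and $\Vert \cdot \Vert_{\mathrm{L}^2}$ are equivalent, so $B$ extends to a bounded self-adjoint operator on $\mathrm{L}^2$ and $|\langle Ba, a\rangle_{\mathrm{L}^2}| \leq C\Vert a\Vert^2_{\mathrm{L}^2_\omega}$, giving a lower bound on $Q$. For the form-domain identification I would invoke Gaffney's inequality on the closed manifold $\mathcal{M}$ (the Euclidean case being lemma \ref{gaffney}, with a standard transfer via a partition of unity),
\[
\Vert \nabla a\Vert_{\mathrm{L}^2}^2 \leq C\left(\Vert \diff a\Vert_{\mathrm{L}^2}^2 + \Vert \diff^* a\Vert_{\mathrm{L}^2}^2 + \Vert a\Vert_{\mathrm{L}^2}^2\right).
\]
Combined with the fact that, by $\epsilon$-regularity (theorem \ref{Ereg}) and point removability, the Yang-Mills connection $\nabla$ is smooth on the closed manifold, so that $\diff_\nabla - \diff$ and $\diff_\nabla^* - \diff^*$ are bounded zeroth-order perturbations, one obtains the two-sided estimate
\[
\tfrac{1}{C}\Vert a\Vert_{\mathrm{W}^{1,2}}^2 \leq Q(a) + C\Vert a\Vert_{\mathrm{L}^2_\omega}^2 \leq C\Vert a\Vert_{\mathrm{W}^{1,2}}^2.
\]
Therefore the closure of $\Omega^1$ under the form norm $\sqrt{Q + (1 - \lambda_0)\Vert \cdot \Vert_{\mathrm{L}^2_\omega}^2}$ is exactly $W$.

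Finally, the Friedrichs extension theorem produces a unique self-adjoint operator on $\mathfrak{H}$, bounded below and restricting to $\mathcal{L}$ on $\Omega^1$, whose operator domain is contained in its form domain $W$. Compactness of the resolvent is equivalent to the embedding $(W, \Vert \cdot \Vert_{\mathrm{W}^{1,2}}) \hookrightarrow (\mathfrak{H}, \Vert \cdot \Vert_{\mathrm{L}^2_\omega})$ being compact. Since $\Vert \cdot\Vert_{\mathrm{L}^2_\omega}$ is equivalent to $\Vert \cdot \Vert_{\mathrm{L}^2}$, this reduces to the Rellich-Kondrachov embedding $\mathrm{W}^{1,2}(\mathcal{M}) \hookrightarrow \mathrm{L}^2(\mathcal{M})$ on a closed manifold, which is well known to be compact. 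The only genuinely non-formal step is the form-domain identification: one must justify that the weight $\omega$ does not affect the closure (guaranteed by the two-sided $\mathrm{L}^\infty$-bound on $\omega$) and that the covariant operators can be replaced by the flat ones modulo bounded zeroth-order terms (guaranteed by smoothness of $\nabla$). Everything else is abstract spectral theory.
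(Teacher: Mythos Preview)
Your proof is correct and follows essentially the same architecture as the paper's: both establish the result via the Friedrichs extension of a symmetric, semibounded operator and deduce compact resolvent from Rellich--Kondrachov. The one structural difference is in how $B$ is handled. The paper first reduces to the case $B=0$, constructs the Friedrichs extension of $\omega^{-1}\Delta_\nabla$ (with form domain $W$ and operator domain $\mathrm{W}^{2,2}$), and then adds $B$ back as a bounded perturbation, observing that $\omega^{-1}B(\omega^{-1}\Delta_\nabla+1)^{-1}$ is compact so that the perturbed operator inherits self-adjointness, semiboundedness, and compact resolvent by standard Kato--Rellich type results. You instead keep $B$ in the quadratic form from the start and absorb it directly into the lower bound, which is slightly more elementary since it avoids any appeal to relative-compactness perturbation theory. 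Conversely, the paper's decoupling makes the identification of the form domain essentially trivial (since $Q(a)=\|\diff_\nabla a\|^2+\|\diff_\nabla^* a\|^2\geq 0$ is manifestly the Hodge--Dirichlet form), whereas you need the Gaffney inequality and the smoothness of $\nabla$ to pin down the form domain as $W$; both routes arrive at the same place with comparable effort.
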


\begin{remark}Since $\omega$ and $1/\omega$ are in $\mathrm{L}^\infty$, the $\mathrm{L}^2_\omega$ norm and the usual $\mathrm{L}^2$ norm are equivalent on  $\mathfrak{H}$.
\end{remark}

\begin{proof}  It is enough to show this result with $B = 0$ and then that $\omega^{- 1} B
  (\omega^{- 1}  \Delta_\nabla + 1)^{- 1}$ is a compact operator of $\mathfrak{H}$ (see \cite[corollaries 3.19, 5.24, theorem 5.31]{lewin:hal-03812036}).
  
    Introduce $(\mathcal{L}^{\min}, D (\mathcal{L}^{\min})) = \left(\omega^{- 1} \Delta_\nabla,
  \Omega^1(\mathcal{M}, \mathfrak{g})\right)$ and $Q$
  the corresponding quadratic form. If $a \in D (\mathcal{L}^{\min})$, \[ Q (a) = \langle a, \mathcal{L}^{\min} a \rangle_{\mathrm{L}^2_{\omega}} \geqslant 0\] so $Q$ is bounded below and closed on $W$. We define $(\mathcal{L}, D (\mathcal{L}))$ to be the Friedrichs extension of $\mathcal{L}^{\min}$. $\mathcal{L}$ is a non-negative self-adjoint operator and we have \[ D (\mathcal{L}) = \left\{ a \in W \mid \Delta_\nabla a \in \mathrm{L}^2\right\}= \mathrm{W}^{2, 2} (\mathcal{M}, T^*M\otimes \mathfrak{g}). \]Since the embedding $W\hookrightarrow \mathfrak{H}$ is compact by Rellich-Kondrachov theorem, $\mathcal{L}$ has compact resolvent.
  
  \noindent To conclude, since $\omega^{- 1} B : \mathfrak{H} \to \mathfrak{H}$ is bounded and $
  (\omega^{- 1}  \Delta_\nabla + 1)^{- 1} : \mathfrak{H} \to \mathfrak{H}$ is compact, $\omega^{- 1} B
  (\omega^{- 1}  \Delta_\nabla + 1)^{- 1}  : \mathfrak{H} \to \mathfrak{H}$ is compact, which concludes the proof.
\end{proof}

\newpage

\addcontentsline{toc}{chapter}{Bibliography}

\bibliographystyle{plain}
\bibliography{references}

\end{document}